\documentclass[11pt,a4paper]{amsart}
\usepackage{amsmath,caption,booktabs,lipsum}
\numberwithin{equation}{section}
\usepackage{amsthm}
\usepackage{pdfpages}
\usepackage{relsize}
\usepackage{amsfonts}
\usepackage{pdfpages}
\usepackage{amssymb}
\usepackage{mathrsfs}
\usepackage{tikz}
\usepackage{caption}
\usepackage{environ}
\usepackage{elocalloc}
\usepackage{url}
\usepackage{caption}
\usepackage{graphicx}
\usepackage[backend = bibtex, style=numeric-comp,doi=false,isbn=false,url=false]{biblatex}
\usepackage{CJKutf8}
\usepackage[normalem]{ulem}
\usepackage{xcolor}
\usepackage{etoolbox}
\usepackage{caption}
\usepackage{mathtools}
\usepackage{dcpic}
\usepackage{tikz-cd}
\usepackage[bottom]{footmisc}
\usepackage{hyperref}
\usepackage{enumitem}
\usepackage{stmaryrd}
\usetikzlibrary{positioning}
\usetikzlibrary{arrows,chains,positioning,scopes,quotes}
\usetikzlibrary{decorations.markings}
\allowdisplaybreaks

\newcommand{\ai}{\alpha}

\newcommand{\sing}{{\operatorname{Sing}}}

\newcommand{\be}{\beta}

\newcommand{\de}{\delta}
\newcommand{\De}{\Delta}

\newcommand{\lam}{\lambda}

\newcommand{\Om}{\Omega}
\newcommand{\si}{\sigma}
\newcommand{\Si}{\Sigma}

\newcommand{\s}{\subset}

\newcommand{\cp}{^\complement}

\newcommand{\la}{\langle}
\newcommand{\ra}{\rangle}
\newcommand{\ov}[1]{\overline{#1}}
\newcommand{\no}[1]{\left\lVert#1\right\rVert}

\DeclarePairedDelimiter{\ri}{\la}{\ra}

\newcommand{\m}{^{-1}}
\newcommand{\ts}{\otimes}

\newcommand{\pd}{\partial}

\newcommand{\na}{\nabla}

\newcommand{\ok}{\Om_{K}}
\newcommand{\oi}{\Om_{[\frac{3}{4}h^2,\infty)}}
\newcommand{\ot}{\Om_{(\frac{1}{2}h^2,\frac{3}{4}h^2)}}
\newcommand{\oo}{ \Om_{[0,\frac{1}{2}h^2]}}

\newcommand{\R}{\mathbb{R}}

\newcommand{\di}{\operatorname{div}}

\newcommand{\py}{{\phi'(|x|/\lam_y)^2}}
\newcommand{\pyy}{{\phi'(|x|/\lam_y)}}

\makeatletter
\def\thm@space@setup{%
	\thm@preskip=0.2cm plus 0cm minus 0cm
	\thm@postskip=\thm@preskip 
}
\makeatother
\theoremstyle{plain}
\newtheorem{thm}{Theorem}
\newtheorem{exam}{Example}[section]
\newtheorem{lem}[exam]{Lemma}

\newtheorem{fact}[exam]{Fact}

\newtheorem{defn}[exam]{Definition}

\theoremstyle{definition}
\newtheorem{conj}{Conjecture}

\title[On Simon's area-minimizing hypersurfaces with fractal singular sets]{On Simon's area-minimizing hypersurfaces with fractal singular sets}
\author{Zhenhua Liu}
\dedicatory{In memory of David Kraines}
\numberwithin{equation}{subsection}
\begin{document}
	\setlength{\abovedisplayskip}{5pt}
	\setlength{\belowdisplayskip}{5pt}
	\setlength{\abovedisplayshortskip}{5pt}
	\setlength{\belowdisplayshortskip}{5pt}
	\maketitle\vspace{-3em}
	\begin{abstract}
We verify that the stable minimal hypersurfaces with prescribed singular sets constructed by Simon in \cite{LSfr} are area-minimizing. Thus, Simon's work \cite{LSfr} settles the existence of area-minimizing hypersurfaces with fractal singular sets.
	\end{abstract}
	\section{Introduction}
Area-minimizing hypersurfaces of dimension $N$ are known to have singular sets of Hausdorff dimension at most $(N-7)$ \cite{HFts}. Leon Simon has proved that the singular sets are $(N-7)$ countably rectifiable \cite{LScy}. See also \cite{NV}. Frederick Almgren raised the following question \cite[Problem 5.4]{GMT}, 
\begin{conj}\label{ca}
		\emph{"Is it possible for the singular set of an area-minimizing integer (real) rectifiable current to be a Cantor type set with possibly non-integer Hausdorff dimension?"}
	\end{conj}
We prove that the ground-breaking work of Leon Simon \cite{LSfr} answered the above question in the hypersurface case. 
\begin{thm}\label{thmm}
The hypersurface $M\s\R^{n+m+l}$ in \cite[Theorem 3.7]{LSfr} is area-minimizing both as an integral current and as a mod $2$ current provided the construction parameters $\tau$ and $\de$ are small, the Lawson cone $M_{(n,m)}$ is area-minimizing, and $(n,m)\not=(3,5)$.
\end{thm}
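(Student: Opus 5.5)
The plan is to prove minimality by a calibration-type argument. We build a foliation, or more precisely a sub-/super-solution barrier family, near $M$. The natural tool for hypersurfaces is the \emph{one-sided foliation criterion}. Suppose a neighbourhood of $M$ is swept out by hypersurfaces whose mean curvature vectors point towards $M$ (mean-convex from both sides). Then the unit normal field $\nu$ of the leaves extends the normal of $M$, and $\di \nu$ has the right sign on each side of $M$. Hence the $n+m+l-1$ form $\iota_\nu(dV)$ is a sub-calibration on each side, and a standard divergence-theorem comparison shows that $M$ minimizes area among competitors agreeing with $M$ outside a compact set. For the mod $2$ statement we use the same argument applied to the boundary of the region $E$ bounded by $M$. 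Since $M$ is a boundary (Simon's $M$ separates $\R^{n+m+l}$), any mod $2$ competitor is homologous mod $2$ to the boundary of a set $F$ with $F\Delta E$ compact. The calibration argument for sets of finite perimeter then handles integral and mod $2$ competitors simultaneously.

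Concretely, recall Simon's construction. $M$ is the graph over the cylinder $C_{(n,m)}\times\R^l$, where $C_{(n,m)}=M_{(n,m)}$ is the Lawson cone, of a small function $u$. On scale $\lam$ the graph is modelled on $\lam$ times a leaf of the Hardt--Simon foliation, with $\lam$ varying along $\R^l$ and vanishing exactly on the prescribed closed set $K\s\R^l$. Here $\tau,\de$ control the smallness of $u$ and of its variation in $y\in\R^l$. Since $M_{(n,m)}$ is area-minimizing and $(n,m)\neq(3,5)$, the cone is strictly minimizing in the sense of Hardt--Simon. So each side of $C_{(n,m)}$ is foliated by dilates $\{tS_\pm\}_{t>0}$ of smooth minimal hypersurfaces $S_\pm$, and these satisfy the quantitative decay/growth estimates $|x|^{-\gamma}$ with a strict gap $\gamma^+\ne\gamma^-$. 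The exclusion $(3,5)$ should be exactly what provides this strict-stability gap, or the needed non-degeneracy of the Jacobi field asymptotics. The key construction is a family of perturbed leaves $M_s$, $s\in(-s_0,s_0)$. For each $s$ we take the graph over $C\times\R^l$ of $u+ s\,\phi_s$, where $\phi_s$ is modelled on the Hardt--Simon leaf displacement (for $s>0$ use $tS_+$ dilates with larger scale parameter, for $s<0$ use $S_-$). We then perturb by a term like $\pm s\,\de\, r^{\gamma}$ chosen to make the mean curvature strictly signed. We verify that the $M_s$ are disjoint, fill a neighbourhood of $M$, and satisfy $H_{M_s}\cdot \pd_s>0$ pointing towards $M$. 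Outside this neighbourhood we use the Hardt--Simon foliations times $\R^l$, which are exactly minimal and hence fine. Globally, the leaves near $M$ must be glued to the cylindrical Hardt--Simon leaves $tS_\pm\times\R^l$ for large $t$.

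The main obstacle is the sign of the mean curvature of the perturbed leaves in the transition regions. Two regimes are involved. First, near $K\times\{0\}$, where $\lam(y)\to0$ and the $y$-dependence of the graph is not negligible compared with the curvature scale. Second, the gluing at $|x|\sim h$ (cf.\ the regions $\oo,\ot,\oi$ suggested by the macros). There the mean curvature error from $y$-derivatives of $\lam$ and from the cutoff $\phi(|x|/\lam_y)$ must be dominated by the positive term coming from the strict sub/supersolution perturbation. I would first linearize $H$ about $C\times\R^l$. The leading term is $L_C\phi+\De_y\phi$, where $L_C$ is the cone Jacobi operator. I would choose the perturbation as a positive supersolution $L_C w+\De_y w\le -c\,r^{-2}w$, built from $r^{\gamma}$ with $\gamma$ strictly between the two indicial roots, which is possible exactly because of strict stability. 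Then I would use the smallness of $\tau,\de$ to absorb the quadratic terms, the $\De_y u$ error (bounded by $\de r^{-2}|u|$ via Simon's estimates on $|\na_y\lam|$), and the cutoff errors. Once the signed foliation exists, the calibration/divergence argument concludes area-minimality both as integral and mod $2$ currents.
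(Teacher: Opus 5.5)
Your framework matches the paper's in spirit: a family of hypersurfaces whose mean curvature points towards $M$ is a subcalibration, the divergence theorem then gives minimality, and the mod $2$ case reduces to sets of finite perimeter. The paper's subcalibration is the family of level sets of the explicit profile $E=(u^2-|\xi|^2)\max\{u,|\xi|\}^d$. But there is a genuine gap: you never construct your family or verify the sign of its mean curvature, and that is the entire content of the theorem. The route you sketch also fails at specific places.

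\textbf{The metric.} $M$ is minimal only for the altered metric $g=dx^2+f\,d\xi^2+dy^2$, and minimality has to be proved in $g$. On the unbounded region $\ot\cup\oi$ one only knows $f=1+O(\tau)$. So the Hardt--Simon leaves times $\R^l$ are not $g$-minimal there in general; their $g$-mean curvature is $O(\tau)$ with no sign, and you cannot use them as they are.

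\textbf{The gluing.} Splitting the space into ``perturbed leaves near $M$, Hardt--Simon leaves outside'' cannot work. Take $S_+=\{|\xi|=\phi(|x|)\}$ and any $0<t<\sup_y u(0,y)$. Over $K$, where $M$ is the cone, $tS_+\times\R^l$ lies strictly on the $\xi$-side of $M$. At $x=0$ it lies below $M$ wherever $u(0,y)>t$. By continuity in $y$, it therefore meets $M$. The $x$-side leaves never enter the region between $C\times\R^l$ and $M$ either. So your leaves would have to fill that whole region over $U=\R^l\setminus K$ by themselves, all the way down to $K$. There $\lam_y=u(0,y)\to0$ at a rate you do not control, since $K$ is arbitrary.

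\textbf{Linearizing about the cone.} In that region (the paper's $\oo$), $M$ is essentially $\lam_y$ times a Hardt--Simon leaf. It is smooth across $x=0$, so it is not a small graph over $C\times\R^l$. Linearizing about the cone and adding an $r^\gamma$ supersolution therefore does not apply, and the sign there is not an $O(\tau,\de)$ perturbation of a conical computation.

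\textbf{What the paper does instead.} It writes $\di_g(\na^gE/\no{\na^gE}_g)$ as $(u^2-|\xi|^2)$ times a quadratic form in $(u^2,|\xi|^2)$, with coefficients built from $|Du|^2$, $u\De u$ and $uDf\cdot Du$. On $\Om_K\cup\ot\cup\oi$ this is a perturbation of the cone case. On $\oo$ it needs the non-perturbative bound $u\De u-(m-1)\le R(\phi'(|x|/\lam_y))\phi'(|x|/\lam_y)^2+O(\de)$. That bound comes from Simon's comparison $u\approx\lam_y\phi(|x|/\lam_y)$ plus an ODE comparison for $\phi\phi''/(1+\phi'^2)$, and a Cauchy--Schwarz step then gives positivity. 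Nothing in your sketch replaces this.

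\textbf{The case $(3,5)$.} Your explanation of the hypothesis $(n,m)\neq(3,5)$ is wrong. $M_{(3,5)}$ and $M_{(5,3)}$ are the same cone up to an isometry, so they have the same indicial roots and the same two-sided Hardt--Simon foliation; no stability gap separates them. In the paper the exclusion is purely technical: the chosen $E$ leaves too little positivity margin for Simon's surface built on $(3,5)$. That your argument would cover $(3,5)$ just as well suggests it has not yet reached the real difficulty.
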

Note that the area-minimizing Lawson cone $M_{(5,3)}$ is covered by the above theorem and $M_{(5,3)}$ and $M_{(3,5)}$ are equivalent up to ambient isometries, while Simon's hypersurface $M$ based on them are not equivalent. 

Compared to \cite{ZLa}, where the singularities are self-intersections of non-transverse immersions, the singularities in \cite{LSfr} are genuine and cannot be reduced into sum of smoothly embedded sheets.

Zhihan Wang has informed the author that GPT found proof for the $M_{(3,5)}$ case: the key lies in modifying the vector field in Fact \ref{fctpq} directly and no longer requires it to be the normalized gradient of a function. 
\subsection{Brief Recall of \cite{LSfr} and sketch of proof}
Let us first briefly recall the main results in \cite{LSfr}.

 The ambient manifold is (differential topologically)
\begin{align*}
\R^n\times \R^m\times\R^l,
\end{align*}
with $n\ge 3,m\ge 2,l\ge 1$ integers satisfying 
\begin{align*}
	n+m\ge 8.
\end{align*}
We use coordinate labels $$x=(x_1,\cdots,x_n)\in\R^n,\xi=(\xi_1,\cdots,\xi_m)\in\R^m,y=(y_1,\cdots,y_l)=\in\R^l.,$$

\begin{defn}\label{defnlc}
	The Lawson cone $(n,m)$, denoted by $M_{n,m}$, is defined in the standard flat $\R^n\times\R^m$ by
	\begin{align*}
		M_{n,m}=\left\{(x,y)\in\R^n\times\R^m:|\xi|=\sqrt{\frac{{m-1}}{{n-1}}}|x|\right\}.
	\end{align*}
\end{defn}
\begin{fact}\cite{BLep,PSmc}
	The Lawson cone $M_{n,m}$ is an area-minimizing integral current if and only if $n,m\ge 2,n+m\ge 9,$ or $(n,m)=(3,5),(4,4),(5,3).$
\end{fact}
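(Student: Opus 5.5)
The plan is to treat the two directions separately. For ``only if'' I would use instability or explicit competitors. For ``if'' I would construct a calibration from a foliation by minimal hypersurfaces. Throughout I reduce everything to the orbit space of the $O(n)\times O(m)$ action. Put $r=|x|$ and $s=|\xi|$. An invariant hypersurface is then a curve $\gamma$ in the quadrant $\{r,s\ge 0\}$, and its area is $c_{n,m}\int_\gamma r^{n-1}s^{m-1}\,d\ell$. The cone $M_{n,m}$ corresponds to the ray $s=\sqrt{\frac{m-1}{n-1}}\,r$.

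\textbf{Necessity.} The link $M_{n,m}\cap S^{n+m-1}$ is $S^{n-1}(a)\times S^{m-1}(b)$, and it has $|A|^2=N-1$ with $N=n+m-1$. By Simons' criterion the cone is unstable exactly when $|A|^2>\big(\tfrac{N-2}{2}\big)^2$. This holds precisely when $N\le 6$, that is $n+m\le 7$, and there the cone is not area-minimizing. If $n=1$ or $m=1$, the ``cone'' is a union of hyperplanes or a degenerate set, and one discards it directly. The remaining cases are $(2,6)$ and $(6,2)$, which are strictly stable but not minimizing. For these I would analyze the ODE for $O(n)\times O(m)$-invariant minimal hypersurfaces. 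Using a phase-plane argument, I would show that the smooth invariant minimal surface starting from a sphere orbit on one side of the cone must cross the cone ray. That crossing produces a region cut off between the two curves. Swapping the two arcs there gives an invariant competitor with strictly less area, because each arc minimizes the weighted length relative to the other on its side. This follows Lawlor's vanishing-calibration criterion.

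\textbf{Sufficiency.} I would follow Hardt--Simon and Davini. On each side of the cone, take the invariant minimal hypersurface $S_\pm$ that emanates from the axis $\{s=0\}$ (respectively $\{r=0\}$). Show via the ODE that its generating curve stays strictly on one side of the ray and is asymptotic to it with rate $\rho^{\gamma}$, where $\gamma<1$ is a real Jacobi exponent. The exponent is real because $n+m\ge 8$ gives strict stability, and it is monotone when no oscillation occurs. The dilates $\{tS_\pm\}_{t>0}$, together with the cone, then foliate $\R^{n+m}$. Let $\nu$ be the unit normal field of this foliation, taken as the normalized gradient of a function whose level sets are the leaves. The form $\nu\lrcorner\,dV$ is closed off the cone because each leaf is minimal. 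It is a calibration that is $L^1$-regular across the singular set, so Stokes' theorem gives $\mathrm{M}(M_{n,m}\cap B)\le \mathrm{M}(T)$ for every competitor $T$ with the same boundary. This applies both as integral currents and mod $2$.

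\textbf{Main obstacle.} The hardest step is the non-crossing assertion for the leaves in the borderline cases $n+m=8$ and $n+m=9$. The generic case $n+m\ge 9$ with both $n,m$ large is easier: there one can use Lawlor's curvature criterion or an explicit sub-calibration $\nabla f/|\nabla f|$ with $f=(r^2-\lambda s^2)\,\rho^{k}$ and check $\operatorname{div}\ge 0$ on each side. For $(3,5)$, $(4,4)$ and $(5,3)$ the explicit sub-calibration fails. Here I would first attempt a comparison/barrier argument in the phase plane. I would write the ODE in the variables $(\log\rho,\theta)$, where $\theta$ is the angle, and find an invariant region bounded by explicit sub- and supersolution curves that trap $S_\pm$ on its side of the cone ray. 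If this fails, I would fall back on a rigorous numerical enclosure of the trajectory, as in Lawlor's and Davini's treatment of these cases.
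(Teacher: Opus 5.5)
The paper gives no proof of this Fact. It is quoted from the literature, so your outline has to carry the whole argument by itself. The instability part ($n+m\le 7$ via Simons' criterion) is correct, and the foliation/calibration scheme for sufficiency is a reasonable outline. The non-minimality argument for $(2,6)$ and $(6,2)$, however, has a genuine gap.

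Once the axis solution $\gamma$ crosses the cone ray at $p$, the piece $S_p$ of the invariant surface generated by $\gamma$ and the truncated cone $C_p=M_{n,m}\cap B_{|p|}$ span the same orbit $S^{n-1}(r_p)\times S^{m-1}(s_p)$. Both are merely stationary. Two stationary hypersurfaces with the same boundary have no a priori area ordering, so ``each arc minimizes the weighted length relative to the other on its side'' has no justification. You never establish any inequality between $|S_p|$ and $|C_p|$. Even $|S_p|\le|C_p|$ would suffice, since pasting $S_p$ into $M_{n,m}$ would then give a minimizer with a corner along the orbit of $p$. Lawlor's vanishing-calibration criterion cannot supply the inequality either, because it is a \emph{sufficient} condition for minimality.

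Either of the following would close the gap. (a) Use Hardt--Simon \cite{HS}. If $M_{n,m}$ were minimizing, each component of its complement would contain a smooth minimizing hypersurface disjoint from the cone and unique up to dilation. It is therefore $O(n)\times O(m)$-invariant, since a rotation preserves the distance to $0$. Its dilates foliate that component, so its profile curve reaches the axis, orthogonally by smoothness. By ODE uniqueness it is then a dilate of the axis solution, contradicting the crossing. (b) Alternatively, suppose the polar angle is monotone along $\gamma$ up to $p$. Then the dilates $t\gamma$, $0<t\le 1$, foliate the region between $\gamma$ and the segment $[0,p]$ by minimal leaves. The divergence theorem for their unit normal $\nu$ gives $|S_p|=\bigl|\int_{C_p}\nu\cdot n_C\bigr|<|C_p|$. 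The inequality is strict because $\gamma$ is transversal to the ray at $p$.

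The crossing itself, which is the entire content of this case, is only asserted. It cannot follow from stability or from the asymptotics at infinity. For every split with $n+m=8$ the link has $|A|^2=6$, and the indicial roots of the cone's Jacobi operator are $-2$ and $-3$, the same for $(2,6)$ as for $(3,5)$. So whether the axis solution crosses (or stops being star-shaped) is a global, quantitative property of the trajectory. It needs a genuine barrier argument or rigorous numerics. This is the same step you defer in the sufficiency half, and it shows that strict stability is not what rules out crossing there.

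For sufficiency there is a cheaper route than trapping trajectories. Your remark that explicit subcalibrations fail for $(3,5)$, $(4,4)$, $(5,3)$ is not accurate:
\begin{itemize}
\item $|x|^4-|\xi|^4$ works for $(4,4)$.
\item At $\tau=0$, i.e.\ on $\Om_K$ where $u=\ai_0 r$ and $f=1$, the positivity of $V^\pm$ in Sections \ref{secfact+} and \ref{secfact-} says exactly that the profile $E$ of Definition \ref{defnh} subcalibrates $M_{n,m}$.
\item This holds with $d=3$ for $n+m\ge 9$ (swap factors so that $n\ge 3$), and with $d=\frac{9}{5}$ for $(4,4)$ and $(5,3)$.
\item $M_{3,5}$ is isometric to $M_{5,3}$.
\end{itemize}
Fact \ref{fctsubc} then yields the entire ``if'' direction.
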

Let $K$ be an arbitrary closed set of $\R^l$. The hypersurface $M$ in \cite[Theorem 3.7]{LSfr} is obtained by modifying $M_{n,m}\times\R^l$ and has singular set
\begin{align*}
	\sing M=\{0\}^n\times\{0\}^m\times K, 
\end{align*}Furthermore, $M$ is minimal and strictly stable in an altered metric
\begin{align*}
	g=dx^2+f(x,y)d\xi^2+dy^2.
\end{align*}
The hypersurface $M$ can be written as
\begin{align*}
	M=\left\{(x,\xi,y):|\xi|=u(x,y)\right\}.
\end{align*}
Here $f$ is smooth and $u$ is a modification of $\sqrt{\frac{{m-1}}{{n-1}}}|x|.$

Roughly speaking, in short distance to $K$, $M$ is a smooth minimal hypersurface with respect to the flat metric $\de=dx^2+d\xi^2+dy^2$. In mid distance to $K$, $M$ transitions into the cone $M_{(n,m)}\times \R^l$. In far distance to $K$, $M$ is the cone $M_{(n,m)}\times \R^l$.

We use the family of subcalibrations developed in \cite{ZLlc} to prove that $M$ is area-minimizing. 
\begin{defn}\label{defnh}
	Define
\begin{align*}
		E(x,\xi,y)=\begin{cases}
		\left(u(x,y)^2-|\xi|^2\right)u(x,y)^d,&\textnormal{if }|\xi|\le u(x,y),\\
		\left(u(x,y)^2-|\xi|^2\right)|\xi|^d,&\textnormal{if }|\xi|\ge u(x,y),\\
	\end{cases}
\end{align*}with $d\in(0,\infty)$.
\end{defn}
Note that the zero set of $E$ is $M.$ The divergence of the normalized gradient vector field $\frac{\na^g E}{\no{\na^g E}_g}$ of $E$ equals the signed mean curvature scalar of the level sets of $E.$ \begin{lem}\label{lemsub}
If $d=\frac{9}{5},(n,m)=(4,4),(5,3)$ or $d=3,n+m\ge 9$	then we have in the metric $g$\begin{align*}
		\di_g \frac{\na^g E}{\no{\na^g E}_g}\begin{cases}
			\ge 0,&\textnormal{if }u(x,y)-|\xi|\ge 0,\\
			\le0,&\textnormal{if } u(x,y)-|\xi|\le 0,\\
		\end{cases}
	\end{align*}
\end{lem}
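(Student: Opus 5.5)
The plan is to compute $\di_g \frac{\na^g E}{\no{\na^g E}_g}$ as (minus) the $g$-mean curvature of the level sets $\{E=c\}$. I will write each level set as a radial graph $|\xi|=r_c(x,y)$ over $\R^n\times\R^l$ and compare it with $M=\{c=0\}$. Since $E$ depends on $\xi$ only through $r=|\xi|$ and $g=dx^2+f\,d\xi^2+dy^2$ is a warped product in the $\xi$-directions, everything reduces to a computation in the variables $(x,y,r)$. The $g$-Laplacian acting on functions of $(x,y,r)$ is
\begin{align*}
\Delta_g=\Delta_{x,y}+\tfrac{m}{2}\na_{x,y}\log f\cdot\na_{x,y}+f^{-1}\Big(\pd_r^2+\tfrac{m-1}{r}\pd_r\Big).
\end{align*}
The level sets are explicit:
\begin{align*}
r_c=\sqrt{u^2-c\,u^{-d}}\ \text{ for } c\ge 0, \qquad r_c^2+c\,r_c^{-d}=u^2\ \text{ for } c\le 0.
\end{align*}
So the mean curvature of $\{E=c\}$ is the nonlinear minimal-surface operator $\mathcal{M}_g$ (for radial graphs in the metric $g$) applied to $r_c$. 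The required sign statement is then
\begin{align*}
\mathcal{M}_g(r_c)\le 0 \text{ for } c>0, \qquad \mathcal{M}_g(r_c)\ge 0 \text{ for } c<0,
\end{align*}
with the sign convention fixed by the orientation of $\na^g E$.

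\emph{Step 1 (model case).} Treat first the region far from $K$, where $u=\sqrt{\tfrac{m-1}{n-1}}|x|$ and $f\equiv 1$. Here $E$ is homogeneous of degree $2+d$ in $(x,\xi)$ and independent of $y$. Hence the level sets $\{E=c\}$ are dilates of $\{E=\pm1\}$ times $\R^l$, and the statement reduces to the subcalibration inequality for the Lawson cone established in \cite{ZLlc}. The values $d=\frac95$ for $(n,m)=(4,4),(5,3)$ and $d=3$ for $n+m\ge 9$ are exactly those for which that one-variable ODE inequality holds. I will record from that computation a quantitative form:
\begin{align*}
\big|\mathcal{M}(r_c)\big|\ge \kappa\,\frac{|c|}{\rho^{d+3}}\quad\text{on } \{\rho\approx |x|\},
\end{align*}
for some $\kappa>0$ depending only on $(n,m)$. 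This bound is strict, with the correct sign, in a scale-invariant way.

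\emph{Step 2 (linearization near $M$).} For general $u,f$ I will expand $\mathcal{M}_g(r_c)$ in $c$. The zeroth-order term vanishes because $M$ is $g$-minimal, which is \cite[Theorem 3.7]{LSfr}. The first-order term is $L_M w$, where $L_M$ is the $g$-Jacobi operator of $M$ and $w=\pd_c r_c|_{c=0}\sim -\tfrac12 u^{-d-1}$ up to the normal factor. So the sign condition near $M$ amounts to $u^{-d-1}$ (suitably normalized) being a positive strict supersolution of $L_M$. In the cone region this is the Hardy-type inequality behind Step 1, and $d$ lies strictly inside the admissible range. In the regions near $K$ and in the transition region, I will use Simon's estimates: $u$ and $f$ are $C^2$-close, in scale-invariant norms controlled by $\tau,\de$, to the cone data (respectively to the flat minimal pieces) at each scale $\operatorname{dist}(\cdot,\{0\}\times\{0\}\times K)$. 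Together with the strict stability of $M$, this shows that the perturbation of $L_M(u^{-d-1})$ is dominated by the strict margin $\kappa$ once $\tau,\de$ are small.

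\emph{Step 3 (away from $M$) and the main obstacle.} For $|c|$ not small relative to the scale, I will control the higher-order remainder in $c$, or argue directly. The level sets are again $C^2$-perturbations of the model level sets from Step 1, after rescaling to unit size, and the model inequality is strict away from $M$ as well. The main obstacle is uniformity: the margin in Step 1 degenerates both at $M$ (where it is only linear in $c$) and near the singular set $\{0\}\times\{0\}\times K$. So the errors from the non-flat $f$ and the non-conical $u$ must be shown to vanish at the same rates, linear in $c$ and with the same power of the scale. I would first try to establish this by writing the difference $\mathcal{M}_g(r_c)-\mathcal{M}_{\text{cone}}(r_c)$ as $c\cdot(\text{small})\cdot\rho^{-d-3}$, using the difference quotient in $c$ and Simon's decay estimates for $u-\sqrt{\tfrac{m-1}{n-1}}|x|$ and $f-1$. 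This is also where the exceptional case $(n,m)=(3,5)$ presumably fails: there the admissible $d$-range is too narrow to absorb these errors.
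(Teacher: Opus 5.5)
\textbf{There is a genuine gap near $K$.} Your argument is perturbative around the Lawson-cone model $u=\alpha_0|x|$, $f=1$: every error is to be absorbed into the strict cone margin $\kappa$ from Step 1. That is sound on the almost-conical region $\Omega_K\cup\Omega_{[\frac34h^2,\infty)}\cup\Omega_{(\frac12h^2,\frac34h^2)}$. There the paper does essentially this, with an exact factorization in place of your expansion in $c$, and it uses the Euler--Lagrange equation to get $uDf\cdot Du=O(\tau)$ where $u$ is large.

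The argument breaks down on $\Omega_{[0,\frac12h^2]}$ near $K$. There $u(0,y)=\lambda_y>0$, and \cite[Theorem 6.3]{LSfr} compares $u$ with the rescaled smooth leaf $\lambda_y\phi(|x|/\lambda_y)$, not with the cone; the parameter $\delta$ measures closeness to this leaf. You mention these ``flat minimal pieces'', but at scales $|x|\lesssim\lambda_y$ they are an $O(1)$ deviation from the cone in any scale-invariant norm. Concretely, $|Du|^2\approx\phi'(|x|/\lambda_y)^2$ runs from $0$ at $x=0$ up toward $\alpha_0^2$. Also $u\Delta u-(m-1)=\frac{u}{1+|Du|^2}D^2u(\nabla u,\nabla u)\approx\frac{\phi\phi''}{1+\phi'^2}\phi'^2$ is of order one at $|x|\sim\lambda_y$, not $O(\tau)$ or $O(\delta)$. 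So there is no margin $\kappa$ to perturb from: the sign inequality must be proved afresh for the whole one-parameter family of leaf data.

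Worse, the margin genuinely degenerates there. For $d=3$ on the branch $u\ge v$, write $B^+/u^4$ as a polynomial in $t=v^2/u^2$. Its constant term only satisfies $\frac{n}{25}L_0^+\ge(3mn-5n-5(m-1)+O(\delta))|Du|^2$, which tends to $0$ as $x\to 0$. Hence the comparison errors must be \emph{relative}, of size $O(\delta)|Du|^2$, not merely small in absolute terms.

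\textbf{The missing ingredients are what the paper supplies.} First, the exact identity of Lemma \ref{lemdiv}. There $D^2u(\nabla u,\nabla u)$ is eliminated via the Euler--Lagrange equation and $u^2-v^2$ factors out. What remains is a quadratic polynomial in $t\in[0,1]$ whose coefficients involve only $|Du|^2$, $u\Delta u$, $uDf\cdot Du$ and $f$. Second, the relative comparison estimate of Lemma \ref{lemuduu}. Third, and crucially, the ODE comparison of Lemma \ref{lemphip},
\[\frac{\phi\phi''}{1+\phi'^2}\le\frac{2(m-1)(\alpha_0^2-\phi'^2)}{2n\alpha_0^2+(m-n-1)\phi'^2},\]
which turns the leaf data into a function of $\phi'^2$ alone. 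A Cauchy--Schwarz step then verifies the three endpoint conditions of Fact \ref{fctl}.

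Your appeal to strict stability cannot substitute for this. Stability is an integral condition on $M$ and gives no pointwise sign for the Jacobi operator applied to the specific function $u^{-d-1}$. So Step 2 as written does not close near $K$, and Step 3, which rests on the same cone-perturbation picture, inherits the same problem.
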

Applying divergence theorem as in \cite{GDEPss,DPFMsi} shows  that $M$ is area-minimizing.

The proof of Lemma \ref{lemsub} is straightforward calculation. The very special form of $E$ in Definition \ref{defnh} allows us to simplify $\frac{\na^g E}{\no{\na^g E}_g}$ significantly and factorize 
\begin{align}\label{eqfact}
(\textnormal{Positive factors})\di_g\frac{\na^g E}{\no{\na^g E}_g}=&(u^2-|\xi|^2)\textnormal{Quadratic form in variables }u^2\textnormal{ and }|\xi|^2
\end{align}
The quadratic form in $u,|\xi|$ has coefficients in $|Du|^2$ and $u\De u$. In mid to far distance from $K$, the metric $g$ is quantitatively close to the flat metric $\de$ and the hypersurface $M$ is quantitatively close to the cone $M_{(n,m)}\times \R^l$, so the calculations are very similar to those in \cite{ZLlc} and we are reduced to proving positiveness of real coefficient quadratic forms. Near $K,$ we need refined estimates from \cite{LSfr} by using  $|Du|^2 $ to control $u\De u$. In the easier cases, $u\De u\le \frac{m-1}{n}|Du|^2$ suffices to finish the proof. In general, the quadratic form in $u,|\xi|$ has linear over linear rational coefficients in $|Du^2|$, and a Cauchy-Schwarz argument finishes the proof.
\subsection{Overview of the paper}
Our paper will be structured as follows. 

In Section \ref{secprelim}, we will fix notations and collect several preliminaries.

In Section \ref{seces}, we will calculate key estimates based on \cite{LSfr}.

In Section \ref{seccalc}, we will calculate $\di_g \frac{\na^g F}{\no{\na^g F}_g}$.

In Section \ref{secfact+}, we will prove the factorization (\ref{eqfact}) and positivity in case of $u\ge v.$

In Section \ref{secfact-}, we will prove the factorization (\ref{eqfact}) and positivity in case of $v\ge u.$

In Section \ref{secwrap}, we will finish the proof of Theorem \ref{thmm}.

In Section \ref{secconc} we will give some discussions.
\section*{Remembering David Kraines}
Professor Kraines played a crucial role in my mathematical development
and in my life. He first contacted me after I was admitted to Duke,
and we spoke on the very day I arrived in Durham. His encouragement
to pursue mathematics meant an extraordinary amount to a young person
newly arrived in an unfamiliar country. From the beginning, he made
me feel welcome and believed in what I might become.

He gave me exceptional opportunities, welcoming me into Duke's
PRUV undergraduate research fellowship in both my freshman and
sophomore years. He chose Professor Hubert Bray as my research advisor, a
decision that profoundly influenced my mathematical development and
set me on the path toward geometry. Professor Bray introduced me to
the paper on subcalibrations that helped shape the present work.
In this sense, this paper, too, owes something to Professor Kraines' guidance.

His generosity extended well beyond mathematics. He regularly
invited students to monthly lunches at his home, bringing us together
with warmth and care. He also introduced me to my future wife. It was through him we came to know each other.

The last time I saw Professor Kraines was when I graduated from Duke. I had
hoped to visit him this May, but severe flight delays and subsequent academic travels forced me to
cancel the trip. I wrote to him again during the summer and received
no reply. Only later did I learn that he had passed away around the
time I sent that email. I deeply regret missing the opportunity to
see him again and to tell him how much his encouragement, generosity,
and friendship had meant to me.

I dedicate this paper to his memory, with lasting gratitude for the
opportunities he gave me and the people he brought into my life.
\section*{Acknowledgements}The author is deeply grateful to Professor Leon Simon for his extraordinary generosity and sustained support. His direct encouragement to complete this work was instrumental in bringing the paper to fruition. This paper is also a tribute to Professor Simon’s pioneering mathematical vision. The author extends his sincere thanks to Professors William Allard, Robert Bryant, Simon Brendle, and Frank Morgan for their generous support, and to Professor Hubert Bray for his hospitality and for bringing the subcalibration method back to the author’s attention. Last but not least, I would also like to thank Xunjing Wei for constant support.
\section{Preliminaries}\label{secprelim}
Let us first set up notations and then collect several standard facts.
\subsection{Notation conventions}
Riemannian quantities and operators in metric $g$ will be denoted with superscript or subscript $g$. A Riemannian quantity or operator without label $g$ will always mean the quantity or operator taken in the standard flat metric on $\R^n\times\R^m\times\R^l$. 

Lower case Roman letters will be reserved for the coordinate labels $(x,y)$ and lower case Greek letters will be reserved for the coordinate label $\xi.$	Partial differentiation with respect to $(x,\xi,y)$ coordinate will be the symbol $D.$ Lower case subscript means partial differentiation with respect to other variables.

The symbol $v$ will be reserved to denote $|\xi|,$ i.e.,
\begin{align*}
	v=|\xi|.
\end{align*}
The symbol $r$ will be reserved to denote $|x|,$ i.e., $$r=|x|.$$
 The symbol $\ai_0$ will always mean $$\ai_0=\sqrt{\frac{m-1}{n-1}}.$$
\subsection{Quadratic polynomials}Let us state two handy facts about quadratic polynomials.
\begin{fact}\label{fctl}
	For a quadratic polynomial $L:\R\to\R$ let $L_2,L_1,L_0$ be the coefficients of second, first and zeroth order terms respectively. If 
	\begin{align}\label{eqlll}
		L_2+L_1+L_0,L_1+2L_0,L_0\ge0,
	\end{align}then we have
	\begin{align}\label{eqlllt}
		\inf_{t\in[0,1]}L(t)\ge0.
	\end{align}Furthermore, if strict inequality holds in (\ref{eqlll}), then strict inequality holds in (\ref{eqlllt})
\end{fact}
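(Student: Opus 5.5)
The plan is to write $L$ in a basis of polynomials that are nonnegative on $[0,1]$, chosen so that the coefficients are exactly the three quantities in (\ref{eqlll}). With $L(t)=L_2t^2+L_1t+L_0$, I will verify the identity
\begin{align*}
L(t)=L_0(1-t)^2+(L_1+2L_0)\,t(1-t)+(L_2+L_1+L_0)\,t^2 .
\end{align*}
This is a Bernstein-type decomposition. To check it, expand the right-hand side and compare powers of $t$:
\begin{itemize}
\item the constant term is $L_0$;
\item the linear term is $-2L_0+(L_1+2L_0)=L_1$;
\item the quadratic term is $L_0-(L_1+2L_0)+(L_2+L_1+L_0)=L_2$.
\end{itemize}

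For $t\in[0,1]$ the three functions $(1-t)^2$, $t(1-t)$ and $t^2$ are all nonnegative. Under (\ref{eqlll}) each coefficient is nonnegative, so $L(t)\ge 0$ for every $t\in[0,1]$. Taking the infimum gives (\ref{eqlllt}).

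For the strict version, suppose the three quantities in (\ref{eqlll}) are strictly positive. Let $c>0$ be their minimum. Then for $t\in[0,1]$,
\begin{align*}
L(t)\ge c\left((1-t)^2+t(1-t)+t^2\right)=c\,(1-t+t^2)\ge \tfrac{3}{4}c>0 ,
\end{align*}
using $1-t+t^2=(t-\tfrac12)^2+\tfrac34$. Hence the infimum over $[0,1]$ is at least $\tfrac34 c>0$, which is the strict form of (\ref{eqlllt}). Alternatively, $L$ is continuous on the compact set $[0,1]$, so the infimum is attained and is therefore positive.

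There is no real obstacle here. The only step needing care is finding the right basis, namely recognizing $L_1+2L_0$ as the coefficient of $t(1-t)$. Once the identity above is written down, the rest is immediate.
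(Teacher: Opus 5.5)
Your proposal is correct and matches the paper's proof, which uses the same decomposition $L(t)=(L_2+L_1+L_0)t^2+t(1-t)(L_1+2L_0)+(1-t)^2L_0$. Your explicit bound $L(t)\ge c\,(1-t+t^2)\ge \tfrac34 c$ for the strict case just spells out what the paper leaves as ``follows directly.''
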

\begin{proof}
	We have\begin{align*}
		L(t)=L_2t^2+L_1t+L_0=(L_2+L_1+L_0)t^2+t(1-t)(L_1+2L_0)+(1-t)^2L_0.
	\end{align*}The claimed fact follows directly.
\end{proof}
\begin{fact}\label{fctpoly}
	If $P_0:\R\to\R $ is a quadratic polynomial with strictly positive infimum on $[0,1]$ and $Q_0$ is another quadratic polynomial, then $P_0+\tau Q_0$ has strictly positive infimum on $[0,1]$ provided $\tau$ is small.
\end{fact}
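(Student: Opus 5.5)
Since $P_0$ and $Q_0$ are continuous on the compact interval $[0,1]$, both attain their extrema there. Set
\begin{align*}
	c=\inf_{t\in[0,1]}P_0(t)>0,\qquad C=\sup_{t\in[0,1]}|Q_0(t)|<\infty.
\end{align*}
For every $t\in[0,1]$ and every real $\tau$ this gives
\begin{align*}
	P_0(t)+\tau Q_0(t)\ge c-|\tau|\,C.
\end{align*}
Hence, whenever $|\tau|\le \frac{c}{2C}$ (any $\tau$ works if $C=0$), we get
\begin{align*}
	\inf_{t\in[0,1]}\left(P_0+\tau Q_0\right)\ge \frac{c}{2}>0.
\end{align*}

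It can help to make $C$ explicit in the coefficients of $Q_0$. For $t\in[0,1]$ we have $|t^2|,|t|\le 1$, so
\begin{align*}
	C\le |(Q_0)_2|+|(Q_0)_1|+|(Q_0)_0|,
\end{align*}
with the coefficient notation of Fact \ref{fctl}. The admissible smallness of $\tau$ therefore depends only on the coefficients of $P_0$ and $Q_0$.

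This matters in later applications, where $Q_0$ may itself vary (for example, with coefficients depending on bounded geometric quantities). The same argument then gives a uniform threshold for $\tau$, provided the coefficients of $Q_0$ stay uniformly bounded and $\inf_{[0,1]}P_0$ stays uniformly bounded below by a positive constant.

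The only point needing care is that positivity is required on the closed interval $[0,1]$. This is what makes the infimum $c$ attained, and hence strictly positive, rather than merely nonnegative. The hypothesis supplies exactly this, so no step presents a real obstacle.
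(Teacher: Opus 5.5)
Your proof is correct and is essentially the paper's argument: the paper writes $P_0+\tau Q_0=P_0\bigl(1+\tau\frac{Q_0}{P_0}\bigr)$ and uses that $\frac{|Q_0|}{P_0}$ is bounded on $[0,1]$, which is the multiplicative form of your additive bound $P_0+\tau Q_0\ge c-|\tau|C$. Your version has the small advantage of giving the explicit lower bound $\frac{c}{2}$ on the infimum. It also makes explicit the uniformity in the coefficients of $Q_0$, which is needed when the $O(\tau)$ terms vary from point to point.
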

\begin{proof}
	By positive infimum of $P_0,$ $\frac{|Q_0|}{P_0}$ is a bounded continuous function on $[0,1]$. We have
	\begin{align*}
		P_0+\tau Q_0=P_0\left(1+\tau\frac{Q_0}{P_0}\right)\ge P_0\left(1-\tau\frac{|Q_0|}{P_0}\right)>0.
	\end{align*}
\end{proof}
\subsection{Subcalibrations} 
Let us introduce subcalibrations.
\begin{fact}\label{fctsubc}
Let $O$ be an oriented open set in $\R^{N+1}$ and assume that $O$ represents an integral current and let $M=\pd O$ both in the sense of sets and currents. If $E$ is a  function on $\R^{N+1}$ satisfying,
	\begin{itemize}
		\item $E\m(0)=M,E\m(-\infty,0)=O,$
		\item $\frac{\na^g E}{\no{\na^g E}_g}$ is bounded, continuous outside a set $\sing \frac{\na^g E}{\no{\na^g E}_g}$ of Hausdorff $N$-dimensional measure $0$,
		\item $\frac{\na^g E}{\no{\na^g E}_g}$ is locally Lipschitz outside a set $\sing^{Lip} \frac{\na^g E}{\no{\na^g E}_g}$ where the restriction of Hausdorff $N$-dimensional measure is $\si$-finite,
		\item $\di_g \frac{\na^g E}{\no{\na^g E}_g}$ has the same signs with $E$ off $M,$ i.e.,.
		\begin{align*}
			\di_g \frac{\na^g E}{\no{\na^g E}_g}
			\begin{cases}
				\ge 0, &\textnormal{ on }O\cp\\
				\le 0,& \textnormal{ on }O.
		\end{cases}	\end{align*}
	\end{itemize}
	Then $M$ is area-minimizing both as an integral current and as a mod $2$ current. 
\end{fact}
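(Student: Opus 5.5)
The plan is the standard subcalibration comparison argument through the divergence theorem. Let $X=\frac{\na^g E}{\no{\na^g E}_g}$, a unit vector field in $g$ with $|X|_g\le 1$ everywhere. Let $T$ be any integral current (or mod $2$ current) with $\pd T=\pd(M\llcorner B)$ for a compact region $B$, and assume $\operatorname{spt}T\s B$. Orientation classes are fine by the usual reduction. Then $T-M\llcorner B=\pd W$ for an integral $(N+1)$-current $W$, and I would decompose $W=W^+-W^-$ according to $O\cp$ and $O$. For mod $2$ currents, $W$ is just a finite-perimeter set, read mod $2$. Since $X$ is the unit normal to $M$, the $N$-form $\iota_X d\mathrm{vol}_g$ restricts to $M$ as its $g$-area form, and its comass is $\le 1$. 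Hence
\begin{align*}
\mathbf M_g(T)\ge T(\iota_X d\mathrm{vol}_g)=\mathbf M_g(M\llcorner B)+\int_W \di_g X\, d\mathrm{vol}_g .
\end{align*}
The sign condition makes the last integral nonnegative, because $W$ carries the orientation on which $\di_g X$ has the right sign: $+$ on $O\cp$, $-$ on $O$. In the mod $2$ case I would use instead that $\iota_X d\mathrm{vol}_g$ evaluated on any unoriented plane is bounded by $1$ in absolute value. With a consistent choice of sides one still gets $\mathcal H^N_g(T)\ge \int_{\pd U}\la X,\nu\ra$ for the region $U$ between $T$ and $M$, and the flux through each piece of $U$ has the correct sign.

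The key step is justifying the divergence theorem for a merely bounded $X$ with singularities. First I would check the regularity. Since $X$ is bounded and continuous off a set $S$ with $\mathcal H^N(S)=0$, the distributional divergence of $X$ has no singular part supported on $S$: a set of zero $N$-measure carries no flux. Next, $X$ is locally Lipschitz off a set $S'$ on which $\mathcal H^N$ is $\si$-finite. Across a hypersurface-like piece of $S'$, a jump in $X$ could create a singular divergence. But continuity of $X$ off $S$ rules out jumps on $S'\setminus S$, so the distributional divergence on $\R^{N+1}\setminus S$ is the $L^1_{loc}$ function $\di_g X$. I would prove this by cutoff: cover compact subsets of $S$ by balls $B_{r_i}$ with $\sum r_i^N<\varepsilon$, take $\varphi_\varepsilon$ equal to $1$ on $\bigcup B_{r_i}$ with $\int|D\varphi_\varepsilon|\lesssim\sum r_i^N$, and use boundedness of $X$ to show that the error vanishes as $\varepsilon\to 0$. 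Then I would apply Gauss–Green to $W$, an integral current or set of finite perimeter, against $X$. Since $X$ is continuous off $S$, its trace on $\pd^*W$ is defined $\mathcal H^N$-almost everywhere, and this gives the identity displayed above.

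Finally, I would conclude $\mathbf M_g(T)\ge\mathbf M_g(M\llcorner B)$. This is area-minimizing in the metric in which $E$ is built. If $g$ is the flat metric, that is the statement. Otherwise one uses the same argument with $g$ as given, as the later sections presumably do. The main obstacle is the measure-theoretic step: showing that the $\si$-finite non-Lipschitz set contributes no singular divergence. I would handle it by combining continuity off $S$ with the coarea and slicing argument above.
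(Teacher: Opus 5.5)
There is a genuine gap in the step where you assert $\int_W\di_g X\,d\mathrm{vol}_g\ge0$ on the grounds that $W$ carries the right orientation. For an arbitrary integral competitor, $W=[\![\R^{N+1}]\!]\llcorner\theta$ with $\theta$ an arbitrary compactly supported integer-valued BV function, so $\int_W\di_g X=\int\theta\,\di_g X\,d\mathrm{vol}_g$. Nothing forces $\theta\ge0$ on $\R^{N+1}\setminus O$ and $\theta\le0$ on $O$. Splitting $W=W^+-W^-$ by region does not produce these signs; it only restates what is needed.

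Concretely, suppose $\di_g X<0$ on a small ball $B_\rho(q)\subset O$, as happens in the application, where the inequalities are strict. Then $T=M\llcorner B+\partial[\![B_\rho(q)]\!]$ has $\theta=1_{B_\rho(q)}$ and $T(\iota_X d\mathrm{vol}_g)<\mathbf M_g(M\llcorner B)$. Your chain of inequalities therefore says nothing about this $T$. This is exactly where a subcalibration differs from a calibration: since $d(\iota_X d\mathrm{vol}_g)\ne0$, the comparison only works for competitors whose filling has the sign structure dictated by $O$.

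The missing idea is to reduce to set competitors first, which is how the paper proceeds. Write $T$ near $B$ as $\partial([\![\R^{N+1}]\!]\llcorner\psi)$ with $\psi=1_O+\theta$. The coarea formula for integer-valued BV functions gives $\mathbf M_g(T)\ge\sum_{k\in\mathbb Z}P_g(\{\psi\ge k\},B)\ge P_g(F,B)$, where $F=\{\psi\ge1\}$ agrees with $O$ near $\partial B$. For this $F$ the filling is $[\![F]\!]-[\![O]\!]$. Its multiplicity is $+1$ on $F\setminus O\subset\R^{N+1}\setminus O$ and $-1$ on $O\setminus F\subset O$, which is precisely the sign structure your argument needs. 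Your Gauss--Green computation, using $|X|_g\le1$ on $\partial^*F$ and $X=\nu_O$ on $\partial^*O$, then does give $P_g(F,B)\ge P_g(O,B)$.

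The paper carries this out in steps. It first proves perimeter-minimality of $O$ by the argument of \cite{DPFMsi}, then uses \cite[Proposition 4.1]{LPgp} to pass to integral currents. For mod $2$ it uses \cite{BWmod4}; there the reduction is automatic, because a top-dimensional mod $2$ filling is a set $G$ and $T=\partial[\![O\,\Delta\,G]\!]$ mod $2$. Your mod $2$ remark is fine once phrased this way.

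Separately, your cutoff argument only disposes of the $\mathcal H^N$-null set. On the merely $\sigma$-finite set no cover with small $\sum r_i^N$ need exist. Showing that continuity there rules out a singular divergence is a genuine removability theorem, which the paper takes from \cite[Theorem 3.10]{DPggdiv} rather than from a coarea or slicing argument.
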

We call $E$ a subcalibration profile of $M.$
\begin{proof} Let us first prove that $O$ is a perimeter minimizing Caccioppoli set.The proof is the same as \cite[Proof of Proposition 4.1]{DPFMsi}. In \cite{DPFMsi}, the authors assume that $\frac{\na^g E}{\no{\na^g E}_g}$ is $W^{1,1}_{loc}$ in order to use divergence theorem on sets of finite perimeter. However, \cite[Theorem 3.10]{DPggdiv}  shows that the second and the third bullet in the lemma suffice for application of divergence theorem.
	
	Next by \cite[Proposition 4.1]{LPgp}, we deduced that $M$ is an area-minimizing integral current. By \cite[Proposition 2.1, $v=2$]{BWmod4}, any compactly supported $(N+1)$-dimensional mod $2$ current can be lifted to an integral chain that represents an oriented bounded measurable set $O'$ with finite perimeter. Applying \cite[Proposition 3.2, $v=,T_1=O,T_2=O'$]{BWmod4}, we deduce that $M$ is area-minimizing mod $2.$
\end{proof}
\section{Key estimates from \cite{LSfr}}\label{seces}
In this section, we will derive several key estimates based on \cite{LSfr}.

Recall \cite[Equation set 3.1]{LSfr} that the smooth auxiliary function $h:\R^l\times \R_{\ge0}$ has $K$ as its zero set, i.e.,
\begin{align*}
	h>0\textnormal{ on }U=\R^l\setminus K, h=0\textnormal{ on }K.
\end{align*}The $C^3$ norm of $h$ is controlled by construction parameter $\tau$.

We will divide $\R^n\times\R^m\times \R^l$ into four disjoint subsets
\begin{align*}
	\R^n\times\R^m\times \R^l=\Om_{K}\cup \Om_{[\frac{3}{4}h^2,\infty)}\cup\Om_{(\frac{1}{2}h^2,\frac{3}{4}h^2)}\cup \Om_{[0,\frac{1}{2}h^2]}.
\end{align*}
\begin{defn}
	Define
	\begin{align*}
		\ok=&\left\{(x,\xi,y):y\in K\right\},\\
		\oi=&\left\{(x,\xi,y):|x|\ge\frac{3}{4}h^2(y),y\in U\right\},\\
		\ot=&\left\{(x,\xi,y):\frac{3}{4}h^2(y)>|x|>\frac{1}{2}h^2(y),y\in U\right\},\\
		\oo=&\left\{(x,\xi,y):\frac{1}{2}h^2(y)\ge|x|,y\in U\right\}.
	\end{align*}
\end{defn}
The special cases of $f,u$ are as follows.
\begin{fact}\cite[Equation 3.5 to Theorem 3.7]{LSfr}
	We have
	\begin{align*}
		u=\begin{cases}
			\ai_0r,&\textnormal{ on }\ok,\\
		\ai_0r,&\textnormal{ on }\oi,	\end{cases}
	\end{align*}and
		\begin{align*}
		f=\begin{cases}
			1,&\textnormal{ on }\ok,\\
			1,&\textnormal{ on }\oo.	\end{cases}
	\end{align*}
\end{fact}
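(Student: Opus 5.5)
This Fact is read off directly from the construction in \cite[Equation 3.5 to Theorem 3.7]{LSfr}, so the plan is to recall how $u$ and $f$ are built there and check each region separately. Simon writes $u$ as an interpolation between the cone profile $\ai_0 r$ and a rescaled smooth minimal graph, using a cutoff of the form $\phi(|x|/h^2(y))$ or an equivalent cutoff in $|x|/\lambda(y)$ with $\lambda$ comparable to $h^2$. He builds $f$ as $1$ plus a correction supported where that cutoff is nonconstant, or beyond it. I would first fix the cutoff conventions and the support of the correction term in $f$, matched to the thresholds $\frac12 h^2$ and $\frac34 h^2$ of the present decomposition.

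\emph{The set $\ok$.} Here $h(y)=0$. Simon defines the scale $h^2$ so that it vanishes on $K$, and there $u$ is defined as the cone profile $\ai_0 r$; equivalently, it is the limit of the interpolation as the scale tends to $0$. The metric perturbation carries a factor that vanishes with $h$ (indeed it is a function of $|x|/h^2$ times a power of $h$), so $f=1$ on $\ok$.

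\emph{The set $\oi$.} Here $|x|\ge \frac34 h^2$. The cutoff is identically equal to its ``cone'' value, so the interpolated profile reduces to $\ai_0 r$.

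\emph{The set $\oo$.} Here $|x|\le \frac12 h^2$. The cutoff is identically equal to its ``inner'' value, so $u$ coincides with the rescaled Hardt--Simon-type minimal graph. The alteration of the metric is supported only in the transition region $\ot$ and possibly in $\oi$, so $f=1$ on $\oo$.

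The only real obstacle is bookkeeping. One must check that Simon's normalization of the cutoff (where it equals $0$ or $1$) and his choice of the support of $f-1$ match exactly the thresholds $\frac12 h^2$ and $\frac34 h^2$ used here. If his thresholds differ by constants, I would redefine the four regions accordingly; the later arguments only use that $u=\ai_0 r$ outside a neighborhood of scale $h^2$ and that $f=1$ inside a smaller such neighborhood. The smoothness of $f$ across $K$ follows from the $C^3$ control on $h$, but this is not needed for the statement itself.
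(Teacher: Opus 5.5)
Your proposal matches the paper, which does not derive this Fact but reads it off from Simon's construction: $u=\alpha_0 r$ on $\Omega_K\cup\Omega_{[\frac{3}{4}h^2,\infty)}$ comes from \cite[equation 3.5]{LSfr}, and $f=1$ on $\Omega_K\cup\Omega_{[0,\frac{1}{2}h^2]}$ comes from \cite[Theorem 3.7]{LSfr}. The mechanisms you guess are speculative and not needed (for example, $f-1$ being a power of $h$ times a function of $|x|/h^2$); in the paper $f$ is tied to $u$ only through the Euler--Lagrange equation from Simon's proof of Theorem 3.7, and it may differ from $1$ on $\Omega_{[\frac{3}{4}h^2,\infty)}$, so it is cleaner to cite these statements directly than to reconstruct them.
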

The needed estimates on $u,f$ can be separated into to cases.
\subsection{Case $1$: almost conical}
In $\ok\cup \oi\cup\ot,$ we need the following estimates.
\begin{fact}\label{fctfu}
On $\ok\cup\oi\cup\ot$, we have
\begin{align*}
	&f=1+O(\tau),Df=O(\tau),	Du=\ai_0Dr+O(\tau),D^2u=\ai_0D^2r+O(\tau),\\&|Du|^2=\ai_0^2+O(\tau),\De u=\ai_0\frac{n-1}{r}+O(\tau)\\&\frac{u}{\ai_0r}=1+O(\tau),D^2u(\na u,\na u)=O(\tau)u\m,\\&uDf\cdot Du=O(\tau),u\De u=(m-1)+O(\tau).
\end{align*} 
\end{fact}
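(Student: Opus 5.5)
The plan is to read off every estimate in Fact \ref{fctfu} from the structure of Simon's construction \cite[Equation set 3.1 to Theorem 3.7]{LSfr}. I would treat the three regions separately, in increasing order of difficulty.

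\emph{The regions $\ok$ and $\oi$.} Here $u=\ai_0 r$ exactly. So $Du=\ai_0Dr$, $D^2u=\ai_0D^2r$, $|Du|^2=\ai_0^2$ and $\De u=\ai_0\frac{n-1}{r}$, using that $\De r=\frac{n-1}{r}$ in $\R^n$. This gives $u\De u=\ai_0^2(n-1)=m-1$ with no error, and $\frac{u}{\ai_0 r}=1$. Also $D^2u(\na u,\na u)=\ai_0^3D^2r(Dr,Dr)=0$, because $r$ is linear along radial lines. For $f$:
\begin{itemize}
\item on $\ok$ we have $f=1$, and $Df=0$ there by smoothness, since $f-1$ vanishes to the relevant order on $K$ in Simon's construction;
\item on $\oi$, Simon's metric factor is $f=1+\tau\cdot(\text{bounded smooth function of } r/h^2, y)$.
\end{itemize}
I would check against \cite[(3.1)--(3.5)]{LSfr} that $f-1$ and $Df$ are $O(\tau)$ uniformly. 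This should follow because the $C^3$ norm of $h$ is controlled by $\tau$. Since $Du$ is bounded, $uDf\cdot Du=O(\tau)$ then needs only $u|Df|=O(\tau)$, that is, $|Df|\lesssim \tau/r$ at large $r$. I expect this from the scaling form of $f$, namely that $f$ depends on $r/h^2$ and that its derivatives carry the factor $h^{-2}\sim r^{-1}$ in the relevant range.

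\emph{The transition region $\ot$.} This is the main obstacle. Here $r\sim h^2$, and $u$ interpolates between $\ai_0 r$ and Simon's near-$K$ minimal profile, which has the form $h^2\,\varphi(r/h^2)$ with $\varphi$ asymptotic to $\ai_0 t$. I would write
\[
u=\ai_0 r+h^2\psi(r/h^2,y),
\]
where $\psi$ and its derivatives up to order two are $O(\tau)$ for $t=r/h^2\in(\tfrac12,\tfrac34)$. This smallness is exactly how Simon chooses the gluing; I would extract it from his estimate that the profile is $\tau$-close to the cone in $C^2$ on this annulus.

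Differentiating, $x$-derivatives cost a factor $h^{-2}\sim r^{-1}$, and $y$-derivatives cost factors of $Dh$ and $D^2h$, which are $O(\tau)$. Hence:
\begin{itemize}
\item $Du-\ai_0Dr=O(\tau)$, which gives $|Du|^2=\ai_0^2+O(\tau)$;
\item $D^2u-\ai_0D^2r=O(\tau)/r$, which gives the $D^2u$ and $\De u$ estimates in scale-invariant form;
\item multiplying by $u\sim r$ gives $u\De u=m-1+O(\tau)$ and $D^2u(\na u,\na u)=O(\tau)u\m$.
\end{itemize}
For the last item I would use $D^2r(Dr,Dr)=0$, so that only the error terms contribute. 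The condition $\frac{u}{\ai_0r}=1+O(\tau)$ follows from $|h^2\psi|\le C\tau r$.

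Finally, I would make the convention explicit: $O(\tau)$ is to be understood at scale $r$, with second-derivative errors carrying a factor $r^{-1}$ as in $D^2u(\na u,\na u)=O(\tau)u\m$. The one real risk is verifying uniformly in $y$ near $K$ that the $y$-derivatives of the glued profile stay $O(\tau)$ relative to the scale. For that I would rely on Simon's requirement $|Dh|+|hD^2h|\le\tau$.
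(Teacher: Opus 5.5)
\textbf{Gap: the bound $uDf\cdot Du=O(\tau)$ on $\oi$.} Your exact computations on $\ok\cup\oi$ (where $u=\ai_0r$) match the paper. So does your treatment of $\ot$, which the paper simply reads off \cite{LSfr} (Equation 3.6 and Theorem 3.7). The problem is $uDf\cdot Du$ on $\oi$. There $u=\ai_0r$ is unbounded, so the global bound $Df=O(\tau)$ gives nothing. You try to get $|Df|\lesssim\tau/r$ from a guessed form $f=1+\tau F(r/h^2,y)$ whose derivatives ``carry $h^{-2}\sim r^{-1}$''. That form of $f$ is an unverified guess; all that is actually available on $\oi$ is $f=1+O(\tau)$, $Df=O(\tau)$, and the minimality of $M$ in $g$. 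Moreover, on $\oi$ the ratio $t=r/h^2$ ranges over all of $[\tfrac34,\infty)$, so $h^{-2}\sim r^{-1}$ fails there; it holds only on $\ot$. Even granting your ansatz, $r\partial_rf=\tau\,t\,\partial_tF(t,y)$ is bounded only if $\partial_tF$ decays like $t^{-1}$, which you neither state nor derive. Bounding the full gradient, including $y$-derivatives (where the chain rule produces further factors of $t$), is both harder and unnecessary. On $\oi$ one has $Du=\ai_0Dr$, so $Df\cdot Du=\ai_0\partial_rf$ involves only the radial derivative. (Your claim that all of $Df$ vanishes on $\ok$ is likewise unsupported, but only $\partial_rf=0$ is needed there, and that is immediate from $f(\cdot,y)\equiv1$ for $y\in K$.)

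\textbf{The missing idea} is to use the minimality of $M$ in $g$ rather than any structure of $f$. Simon's Euler--Lagrange equation is
\[
\tfrac12\bigl(m+(1+f|Du|^2)\m\bigr)Df\cdot Du=-f\De u+\frac{f^2}{1+f|Du|^2}D^2u(\na u,\na u)+\frac{m-1}{u}.
\]
On $\oi$ we have $u=\ai_0r$ exactly, so $u\De u=m-1$, $D^2u(\na u,\na u)=0$ and $|Du|^2=\ai_0^2$. Multiplying by $u$ then gives
\[
\tfrac12\bigl(m+(1+f\ai_0^2)\m\bigr)\,uDf\cdot Du=(m-1)(1-f).
\]
Since $f=1+O(\tau)$ and the coefficient is at least $m/2$, this yields $uDf\cdot Du=O(\tau)$ uniformly on the unbounded region $\oi$, with no information about the form of $f$. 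This is exactly the paper's argument. On $\ot$ the estimate is trivial, because $u=O(h^2)=O(\tau)$ and $Du$, $Df$ are bounded, which gives $O(\tau^2)$.

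\textbf{Minor point.} Your scale-invariant bound $D^2u-\ai_0D^2r=O(\tau)/r$ on $\ot$ is weaker than the stated $D^2u=\ai_0D^2r+O(\tau)$ and $\De u=\ai_0\frac{n-1}{r}+O(\tau)$, since $r\lesssim h^2$ is small there. So you are proving a reinterpreted statement. It does suffice for every later use of the Fact, which needs only $f$, $|Du|^2$, $u\De u$ and $uDf\cdot Du$.
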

Here $r$ is Simon's notation for $|x|,$ i.e., $r=|x|$ and $\ai_0$ denotes $\sqrt{\frac{m-1}{n-1}}$ \cite[just after equation 3.2]{LSfr}.
\begin{proof}On $\ok,$ we have $u=\ai_0|\xi|$ \cite[equation 3.5]{LSfr} and $f=1$ \cite[Theorem 3.7]{LSfr}. The estimates follow directly by setting $\tau=0$, and $\De r=\frac{n-1}{r}$, $D^2(\na r,\na r)=0.$

On $\oi\cup\ot$, by \cite[Equation 3.6 and Theorem 3.7]{LSfr}, we get the first two lines of estimates. For the second line of estimate, note that $u=\ai_0r$ on $\oi$ and we have on $\ot$
\begin{align*}
\left|\frac{u}{\ai_0r}-1\right|\le&\frac{Ch^4}{r}\le\frac{Ch^4}{\frac{1}{2}h^2}=2Ch^2=O(\tau),\\		\sum_{1\le i,j\le n+l}D_iuD_juD_{ij}u=&(D_ir+O(\tau))(D_jr+O(\tau))(D_{ij}r+O(\tau))\\=&\sum_{1\le i,j\le n+l}D_irD_jrD_{ij}r+O(\tau)(|Du|^2+|Du||D^2u|)\\=&O(\tau)(1+O(D^2r))=O(\tau)r\m\\=&O(\tau)u\m.\\
	\end{align*}
For the last line of estimate, on $\ot$, $u=O(\tau),Du=O(1),Df=O(\tau)$ so we deduce $uDf\cdot Du=O(\tau^2)$ and $u\De u=(m-1)+O(\tau).$ On $\oi,$ recall that the Euler-Lagrange equation for $u,f$ is \cite[Proof of Theorem 3.7 just before equation (3)]{LSfr}
\begin{align*}
	\frac{1}{2}\left(m+(1+f|Du|^2)\m\right)Df\cdot Du=-f\De u+\frac{f^2}{1+f|Du|^2}D^2u(\na u,\na u)+\frac{m-1}{u}.
\end{align*}
Substitute $u=\ai_0r$ gives $u\De u=m-1$ and
\begin{align*}
	&\frac{m+(1+f\ai_0^2)\m}{2}uDf\cdot Du\\
	=&-uf\frac{m-1}{u}+\frac{f^2}{1+f\ai_0^2}0+(m-1)\\
=&(m-1)(1-f)\\
=&O(\tau).
\end{align*}Dividing multiplying both sides by $\frac{1}{\frac{m+(1+f\ai_0^2)\m}{2}}\le \frac{2}{m}$ finishes the estimate for $uDf\cdot Du.$
\end{proof}
\subsection{Case $2$: approximation via $\phi$}
On $\oo$, we need to control $u\De u-(m-1).$ In this subsection, we always assume that $(x,\xi,y)\in\oo.$ 
	The Euler-Lagrange equation for $u$ \cite[between (2) and (3) in proof of Theorem 3.7]{LSfr} gives 
\begin{align}\label{eqel}
u\De u-(m-1)=\frac{u}{1+(Du\cdot Du)}D^2u(\na u,\na u)
\end{align}
Our goal is to prove an estimate roughly of the form (Lemma \ref{lemudum})
\begin{align*}
			u\De u-(m-1)\le R(|Du|)|Du|^2+O(\de)|Du|^2,
\end{align*}with $R$ a linear-over-linear rational function.

The comparison estimates in \cite[Theorem 6.3]{LSfr} allow us to use the radially symmetric solution $\phi$, i.e., $|\xi|=\phi(|x|),$ corresponding to the Hardt-Simon foliation \cite{HS}, to estimate $u.$ Let us first recall some facts about the function $\phi:[0,\infty)\to[1,\infty)$.
\begin{fact}\label{fctphi}
	We have a smooth solution $\phi:[0,\infty)\to[0,\infty)$ to the ordinary differential equation,
	\begin{align*}
	\frac{\phi''\phi}{1+(\phi')^2}+\frac{n-1}{r}\phi\phi'={m-1},
	\end{align*}
	satisfying
	\begin{align*}
		&\phi(0)=1,\phi'(0)=0,\phi''(0)=\frac{m-1}{n},\phi'''(0)=0,\phi^{(4)}(0)=\frac{3 (m-1)^2 \left(2 m-n^2-2\right)}{n^3 (n+2)}.\\
		&0<\phi'<\ai_0,\phi''>0,\\
		&\phi-r\phi'>0,\max\{1,\ai_0r\}<\phi<1+\ai_0r.
	\end{align*}
\end{fact}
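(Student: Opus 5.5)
The plan is to treat the equation as the ODE for rotationally symmetric hypersurfaces $|\xi|=\phi(|x|)$ of mean curvature zero. I would first produce a local solution at the singular point $r=0$, then read off the Taylor coefficients. After that I would prove all the inequalities at once by a continuity (first-failure-point) argument that compares $\phi$ with the cone solution $\ai_0 r$, and finally use the resulting a priori bounds to extend the solution to $[0,\infty)$.

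\emph{Local existence and Taylor data.} Write the equation as $\frac{1}{r^{n-1}}\big(r^{n-1}\phi'\big)'=\frac{m-1}{\phi}-\frac{(\phi')^2\phi''}{1+(\phi')^2}$, which has a regular singular point at $0$. With $\phi(0)=1$, $\phi'(0)=0$, the integral form
\begin{align*}
\phi'(r)=r^{1-n}\int_0^r s^{n-1}\Big(\tfrac{m-1}{\phi}-\tfrac{(\phi')^2\phi''}{1+(\phi')^2}\Big)ds
\end{align*}
(after solving algebraically for $\phi''$ in terms of $\phi,\phi',\phi'/r$) gives a contraction on a small interval $[0,r_0]$. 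Smoothness and evenness in $r$ follow in the standard way, for instance by writing $\phi(r)=\Phi(r^2)$ and noting that the equation becomes a regular ODE in $s=r^2$. In particular all odd derivatives vanish at $0$. Since $\phi'/r\to\phi''(0)$, the equation at $r=0$ reads $\phi''(0)(1+(n-1))=m-1$, so $\phi''(0)=\frac{m-1}{n}$. Substituting $\phi=1+\frac{m-1}{2n}r^2+\frac{c}{24}r^4+O(r^6)$ and matching the $r^2$ coefficients is a routine computation that yields the stated value of $\phi^{(4)}(0)$.

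\emph{Inequalities.} Near $0$ we have $\phi''>0$, $\phi'>0$, $\phi'<\ai_0$, $\phi>\ai_0 r$ and $\phi-r\phi'>0$. Let $r_1$ be the first point where one of $\phi'<\ai_0$ or $\phi>\ai_0 r$ fails; there are two cases.
\begin{enumerate}
\item Suppose $\phi(r_1)=\ai_0 r_1$ while $\phi>\ai_0r$ before $r_1$. Then $\phi'(r_1)\le\ai_0$, hence $\phi'(r_1)=\ai_0$ because $\phi'<\ai_0$ before $r_1$. Since $\ai_0 r$ solves the same ODE, which is regular at $r_1>0$, ODE uniqueness forces $\phi\equiv\ai_0 r$. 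This contradicts $\phi(0)=1$.
\item Suppose $\phi'(r_1)=\ai_0$ while $\phi(r_1)>\ai_0r_1$. Then $\phi''(r_1)\ge0$. On the other hand, the equation gives $\frac{\phi''\phi}{1+\ai_0^2}=(n-1)\ai_0\big(\ai_0-\frac{\phi}{r}\big)<0$ at $r_1$, again a contradiction.
\end{enumerate}
So $\phi'<\ai_0$ and $\phi>\ai_0 r$ hold on the whole interval of existence. From these, $r\phi'<\ai_0r<\phi$ gives $\phi-r\phi'>0$, and integrating $\phi'<\ai_0$ gives $\phi<1+\ai_0r$.

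For $\phi'>0$: at a first zero $r_2>0$ the equation gives $\phi''\phi=m-1>0$, whereas $\phi'$ coming down to $0$ requires $\phi''(r_2)\le0$. Hence $\phi'>0$, and therefore $\phi>1$ for $r>0$.

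For $\phi''>0$: at a first zero $r_3$, differentiating the equation gives
\begin{align*}
\frac{\phi'''\phi}{1+(\phi')^2}=\frac{(n-1)\phi'(\phi-r\phi')}{r^2}>0,
\end{align*}
whereas $\phi''$ decreasing to $0$ at $r_3$ forces $\phi'''(r_3)\le0$. This contradiction gives $\phi''>0$.

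\emph{Global existence.} The bounds $1\le\phi$ and $0\le\phi'<\ai_0$ keep the ODE, solved for $\phi''$, uniformly regular on $[\epsilon,R]$ for every $R$, so the solution cannot blow up and extends to all of $[0,\infty)$.

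The main obstacle is the coupled continuity argument. The inequalities are interdependent, and the delicate case is the simultaneous touching $\phi=\ai_0 r$, $\phi'=\ai_0$, which is excluded only by invoking ODE uniqueness against the cone solution. Establishing smoothness at the singular point $r=0$ is the other point that needs care, but it is standard.
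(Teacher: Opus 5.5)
\textbf{There is a genuine gap, in Case 1.} If $\phi>\ai_0 r$ on $[0,r_1)$ and $\phi(r_1)=\ai_0 r_1$, you get $\phi'(r_1)\le\ai_0$. The hypothesis $\phi'<\ai_0$ on $[0,r_1)$ also gives only $\phi'(r_1)\le\ai_0$. Both facts point the same way, so nothing forces $\phi'(r_1)=\ai_0$. A transversal crossing of the cone with $\phi'(r_1)<\ai_0$ is left untouched, and it gives no local contradiction: the equation yields $\frac{\phi''\phi}{1+(\phi')^2}=(m-1)\big(1-\phi'(r_1)/\ai_0\big)>0$, which is perfectly consistent with such a crossing. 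Everything except $\phi'>0$ and $\phi>1$ rests on this case. Case 2 needs $\phi(r_1)>\ai_0 r_1$ at the first point where $\phi'=\ai_0$. The bound $\phi-r\phi'>0$ is deduced from $\phi>\ai_0 r$. Your proof of $\phi''>0$ uses $\phi-r\phi'>0$.

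\textbf{A sharper local argument cannot repair this.} Your proof never uses $n+m\ge 8$, yet the conclusion depends on it. Put $t=\log r$, $q=\phi/r$, $p=\phi'$. The equation becomes the autonomous system $\dot q=p-q$, $\dot p=(1+p^2)\big(\frac{m-1}{q}-(n-1)p\big)$. The orbit of $\phi$ starts at $(q,p)=(+\infty,0)$, and the cone is the rest point $(\ai_0,\ai_0)$. The linearization there has characteristic polynomial $\lambda^2+(n+m-1)\lambda+2(n+m-2)$, whose discriminant $(n+m)^2-10(n+m)+17$ is negative exactly when $n+m\le 7$. In that range the rest point is a focus. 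As is classical for these unstable Lawson cones, $|\xi|=\phi(|x|)$ then crosses the cone infinitely often, so $\phi>\ai_0 r$ and $\phi'<\ai_0$ both fail.

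\textbf{What the fix requires.} You need a global argument that exploits the real eigenvalues available when $n+m\ge 8$. One option is a forward-invariant region for this system inside $\{q>\ai_0,\ 0<p<\ai_0\}$ that contains the start of the orbit, bounded near the rest point by a curve lying between the two eigendirections. Another is to invoke the Hardt--Simon foliation.

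\textbf{How the paper handles this.} The paper does not reprove any of it. It takes existence and all the inequalities from Simon's (4.2)--(4.6) and (4.10). It only computes $\phi''(0)$, $\phi'''(0)$ and $\phi^{(4)}(0)$, by expanding the divergence form $\frac{\phi'}{\sqrt{1+(\phi')^2}}=r^{1-n}\int_0^r\frac{(m-1)s^{n-1}}{\phi\sqrt{1+(\phi')^2}}\,ds$.

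\textbf{Correct part and minor slips.} Your Taylor computation is fine: matching $r^2$-coefficients does give the stated $\phi^{(4)}(0)$. There are two minor slips. First, the rewritten equation should have $+\frac{(\phi')^2\phi''}{1+(\phi')^2}$ on the right, not $-$. Second, in $s=r^2$ the equation reads $4s\Phi\Phi''+2n\Phi\Phi'+8(n-1)s\Phi(\Phi')^3=(m-1)\big(1+4s(\Phi')^2\big)$. This is still singular at $s=0$, with indicial roots $0$ and $1-\frac{n}{2}$. So smoothness and evenness need a Briot--Bouquet/Frobenius-type argument, or elliptic regularity of $x\mapsto\phi(|x|)$, rather than a regular-ODE claim.
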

\begin{proof}
	Everything except for $\phi''(0)$ and $\phi^{(4)}(0)$ is direct restatement of \cite[from equation 4.2 to equation 4.6, equation set 4.10]{LSfr}. For $\phi''(0),$	note that $\lim_{r\to0}\frac{\phi'(r)}{r}=\phi''(0).$ Thus Fact \ref{fctphi} gives
	\begin{align*}
		\phi''(0)+(n-1)\phi''(0)=(m-1),
	\end{align*}	i.e.,
	\begin{align*}
		\phi''(0)=\frac{m-1}{n}.
	\end{align*}
	For $\phi'''(0)$ and $\phi^{(4)}(0)$, use \cite[integral equation bewtween 4.2 and 4.3]{LSfr}
	\begin{align*}
		&\frac{\phi'}{\sqrt{1+(\phi')^2}}=\frac{1}{r^{n-1}}\int_0^r\frac{m-1}{\phi(s)\sqrt{1+(\phi'(s))^2}}s^{n-1}ds\\
		=&\frac{1}{r^{n-1}}\int_0^r\frac{m-1}{\left(1+\frac{1}{2}\phi''(0)s^2+O(s^3)\right)\sqrt{1+(\phi''(0)s+O(s^2))^2}}s^{n-1}ds\\
		=&\frac{1}{r^{n-1}}\int_0^r(m-1)(1-\frac{1}{2}\phi''(0)s^2+O(s^3))\left(1-\frac{1}{2}(\phi''(0))^2s^2+O(s^3)\right)s^{n-1}ds\\
		=&\frac{1}{r^{n-1}}\int_0^r(m-1)\left(1-\phi''(0)s^2+O(s^3)\right)s^{n-1}ds\\
		=&(m-1)\left(\frac{1}{n}r-\frac{\phi''(0)(1+\phi''(0))}{2(n+2)}r^3+O(r^4)\right)\\
		=&\phi''(0)r-\frac{m-1}{2(n+2)}\phi''(0)(1+\phi''(0))r^3+O(r^4).
	\end{align*}
		By Taylor's theorem we have
	\begin{align*}
		&\sqrt{1+(\phi'(r))^2}=\sqrt{1+\left(\phi''(0)r+\frac{1}{2}\phi'''(0)r^2+O(r^3)\right)^2}\\
		=&\sqrt{1+(\phi''(0))^2r^2+\phi''(0)\phi'''(0)r^3+O(r^4)}\\
		=&1+\frac{1}{2}(\phi''(0))^2r^2+\frac{1}{2}\phi''(0)\phi'''(0)r^3+O(r^4)
	\end{align*}
	and 
	\begin{align*}
		&\frac{\phi'}{\sqrt{1+(\phi')^2}}\\=&\left(\phi''(0)r+\frac{1}{2}\phi'''(0)r^2+\frac{1}{6}\phi^{(4)}(0)r^3+O(r^4)\right)\left(1-\frac{1}{2}(\phi''(0))^2r^2-\frac{1}{2}\phi''(0)\phi'''(0)r^3+O(r^4)\right)\\
		=&\phi''(0)r+\frac{1}{2}\phi'''(0)r^2+\left(-\frac{1}{2}(\phi''(0))^3+\frac{1}{6}\phi^{(4)}(0)\right)r^3+O(r^4).
	\end{align*}
	Thus, we must have $\phi'''(0)=0$ and 
	\begin{align*}
		\phi^{(4)}(0)=6\left(-\frac{m-1}{2(n+2)}(1+\phi''(0))+\frac{1}{2}(\phi''(0))^2\right)\phi''(0)=\frac{3 (m-1)^2 \left(2 m-n^2-2\right)}{n^3 (n+2)}.
	\end{align*}
\end{proof}
There is a positive parameter $\de$ in \cite[Theorem 6.3]{LSfr} that can be taken arbitrarily small. However, we cannot set $\de=0$ as only positive $\delta$ gives positive $\tau$ and thus existence of $u$ that works for the construction of $M$ in \cite[Theorem 3.7]{LSfr}. Therefore, in the following estimates $\de$ can be taken to be any positive real value, but not $0$ and this is not a mistake.

Let us first reduce the estimates
\begin{lem}\label{lemuduu}
	We have
	\begin{align*}
		u\De u-(m-1)\le \frac{\phi(|x|/\lam_y)\phi''(|x|/\lam_y)|}{1+{\phi'(|x|/\lam_y)}^2}|Du|^2+O(\de)|Du^2|,
		\end{align*}\end{lem}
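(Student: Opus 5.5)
Starting from the Euler--Lagrange identity (\ref{eqel}) on $\oo$,
\begin{align*}
u\De u-(m-1)=\frac{u}{1+|Du|^2}D^2u(\na u,\na u),
\end{align*}
the problem reduces to bounding $uD^2u(\na u,\na u)$ by $\phi\phi''|Du|^2$ at the rescaled point, up to an error $O(\de)|Du|^2$. To do this I will use the comparison estimates of \cite[Theorem 6.3]{LSfr}. In $\oo$ they say that $u$ is $C^2$-close, after rescaling by $\lam_y$, to the Hardt--Simon profile
\begin{align*}
u_y(x)=\lam_y\phi(|x|/\lam_y).
\end{align*}
Concretely, they give
\begin{align*}
&u=u_y(1+O(\de)),\\
&Du=Du_y+O(\de),\\
&uD^2u=u_yD^2u_y+O(\de),
\end{align*}
with the $y$-derivatives of $u$ contributing only $O(\de)$, or $O(\tau)$ absorbed into $O(\de)$.

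The first step is the model computation. Write $\rho=|x|/\lam_y$. Then $Du_y=\phi'(\rho)Dr$, so $\na u_y$ is radial in $x$ and
\begin{align*}
D^2u_y(\na u_y,\na u_y)=\frac{\phi''(\rho)}{\lam_y}\phi'(\rho)^2.
\end{align*}
Multiplying by $u_y=\lam_y\phi(\rho)$ gives
\begin{align*}
u_yD^2u_y(\na u_y,\na u_y)=\phi(\rho)\phi''(\rho)|Du_y|^2 .
\end{align*}
Dividing by $1+|Du_y|^2=1+\phi'(\rho)^2$ produces exactly the coefficient $\frac{\phi\phi''}{1+\phi'^2}$ in the statement. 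Since $1+|Du|^2\ge 1$ and $\phi\phi''/(1+\phi'^2)$ is bounded by Fact \ref{fctphi} and the ODE, replacing $1+|Du_y|^2$ by $1+|Du|^2$ costs only a relative error $O(\de)$.

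The second step is the perturbation. Write $\na u=\na u_y+e$ with $e=O(\de)$, and expand the bilinear form:
\begin{align*}
uD^2u(\na u,\na u)=u_yD^2u_y(\na u_y,\na u_y)+O(\de)\left(|Du_y|^2+|Du_y||e|+|e|^2\right)+\cdots,
\end{align*}
using that $uD^2u$ is bounded. An error of the form $O(\de)|Du_y|^2$ is harmless. The terms linear in $e$ are the main obstacle, because they are only $O(\de)|Du|$, not $O(\de)|Du|^2$. This matters near $\rho=0$, where $|Du|\approx\phi''(0)\rho\to0$.

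To handle those terms I would first try to use the structure of the comparison: $u-u_y$ is itself radially controlled and vanishes to second order at $x=0$ (both $u$ and $u_y$ are even in $x$ with gradient zero there). This would give
\begin{align*}
|e|\le O(\de)|x|/\lam_y\le O(\de)|Du_y|,
\end{align*}
since $\phi'(\rho)\ge c\min\{\rho,1\}$ by $\phi''>0$ and $\phi'(0)=0$. Then every error term is $O(\de)|Du|^2$. If that refined estimate is not available from \cite{LSfr} directly, the fallback is to obtain it by differentiating the comparison estimate in $x$ and using the oddness of $Du$ at $x=0$. Finally, convert $|Du_y|^2$ to $|Du|^2$ via $|Du_y|\le(1+O(\de))|Du|$, which combines the bounds into the inequality of Lemma \ref{lemuduu}.
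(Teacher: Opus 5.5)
\textbf{There is a genuine gap}, and it sits at the step you flag as the main obstacle. The refined bound $|e|\le O(\delta)|x|/\lambda_y\le O(\delta)|Du_y|$ is false as stated. The reason is that $e=\nabla u-\nabla u_y$ has $y$-components. Since $u_y$ is frozen in $y$, you have $e_y=D_yu$, and at $x=0$ this equals $D_y\lambda_y$ (recall $\lambda_y=u(0,y)$). The comparison estimate only makes $D_y\lambda_y$ smaller than $\delta$. It is nonzero in general, because $\lambda_y>0$ on $U$ while $\lambda_y\to 0$ as $y\to K$. On the other hand $Du_y(0)=0$. Evenness in $x$ gives second-order vanishing of $u-u_y$ only in the $x$-variables, and the oddness you invoke in the fallback holds only for $D_xu$. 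Separately, $|x|/\lambda_y\le C\phi'(|x|/\lambda_y)$ fails for large $|x|/\lambda_y$, since $\phi'<\alpha_0$; there you should simply use $|e|<\delta$. So, as written, the terms linear in $e$ are not shown to be $O(\delta)|Du|^2$.

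The missing observation is that you never need to expand the \emph{argument} of the bilinear form around $\nabla u_y$. The paper evaluates both Hessians on $\nabla u$ itself, writing
$\phi_{\lambda_y}D^2u(\nabla u,\nabla u)=\phi_{\lambda_y}D^2\phi_{\lambda_y}(\nabla u,\nabla u)+\phi_{\lambda_y}D^2(u-\phi_{\lambda_y})(\nabla u,\nabla u)$.
\begin{itemize}
\item The second term is at most $\max\{1,\alpha_0\}\delta|\nabla u|^2$. This follows from the Hessian comparison estimate together with $\phi_{\lambda_y}\le\max\{1,\alpha_0\}(\lambda_y+|x|)$.
\item The first term is computed exactly. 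The Hessian $D^2\phi_{\lambda_y}=\lambda_y^{-1}\phi''\,D|x|\otimes D|x|+\phi'D^2|x|$ has no $y$-components. Moreover $D^2|x|(\nabla u,\nabla u)=0$ because $\nabla_xu$ is radial. So the first term equals $\phi\phi''\langle\nabla u,\nabla|x|\rangle^2\le\phi\phi''|\nabla u|^2$, using $\phi''>0$.
\end{itemize}
No cross terms ever appear, which is also why the lemma is only an inequality.

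The same repair works inside your own framework. $D^2u_y$ only sees $e_x$, and $e_x$ is radial, so $u_yD^2u_y(\nabla u_y+e,\nabla u_y+e)=\phi(\rho)\phi''(\rho)(\partial_ru)^2$ exactly. Alternatively, your refined bound is true for $e_x$ alone: integrate $|D_x^2(u-u_y)|<\delta/\lambda_y$ along the segment from $(0,y)$ to $(x,y)$, where $\nabla_xu$ and $\nabla_xu_y$ both vanish at the starting point. That is all your final conversion $|Du_y|\le(1+O(\delta))|Du|$ needs. Your remaining steps, handling the factors $u/u_y$ and $(1+|Du_y|^2)/(1+|Du|^2)$, agree with the paper.
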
Here big $O$ depends only on $m,n.$
\begin{proof}
Let $$\lam_y=u(0,y),\phi_{\lam_y}(r)=\lam_y\phi(r/\lam_y),
$$	\cite[Theorem 6.3(i)]{LSfr} gives
	\begin{align*}	&(\lam_y+|x|)\m|u(x,y)-\phi_{\lam_y}(|x|)|<\de,\\
	&|D_{x,z}(u(x,z)-\phi_{\lam_y}(|x|))|_{z=y}<\de,\\
	&(\lam_y+|x|)|D_{x,z}^2(u(x,z)-\phi_{\lam_y}(|x|))|_{z=y}<\de.	\end{align*}
Let us unpack the quantities. Let $\nu\in\R^{n+m+l}$ be a vector and $\pi_x\nu$ be its projection to the $x$-components.
\begin{align*}
	&D_{x,z}\phi_{\lam_y}(|x|)|_{z=y}=\lam_yD_{x,z}\phi(|x|/\lam_y)|_{z=y}=\phi'(|x|/\lam_y)D|x|,\\
	&|D_{x,z}\phi_{\lam_y}(|x|)|_{z=y}={\phi'(|x|/\lam_y)}<\ai_0,\\
	&D_{x,z}^2\phi_{\lam_y}(|x|)|_{z=y}=D_{x,z}(\phi'(|x|/\lam_y)D|x|)|_{z=y}=\lam_y\m\phi''(|x|/\lam_y)D|x|\ts D|x|+\phi'(|x|/\lam_y)D^2|x|,\\
	&D|x|\ts D|x|(\nu,\nu)=|\ri{\nu,\na|x|}|^2,D^2|x|(\nu,\nu)=\frac{|\pi_x\nu|^2}{|x|}-\frac{|\ri{\nu,x}|^2}{|x|^3}=\frac{|\pi_x\nu|^2-|\ri{\nu,\na x}|^2}{|x|}.
\end{align*}		
Note that $u$ in the $x$-directions only depend on $|x|$ \cite[equation 3.5]{LSfr}, thus, $D^2|x|(\na u,\na u)=0.$ Now apply Fact \ref{fctphi} to the above calculation gives
	\begin{align*}
		&\left|\frac{u(x,y)}{\phi_{\lam_y}}-1\right|<\de \frac{\lam_y+r}{\lam_y\phi(r/\lam_y)}<\de \frac{\lam_y+r}{\lam_y\max\{1,\ai_0r/\lam_y\}}\le\de\left(1+\frac{1}{\ai_0}\right)	\end{align*}
and
\begin{align}
|Du|&\le |D_{x,z}(u-\phi_{\lam_y}(|x|))|_{z=y}+|D_{x,z}\phi_{\lam_y}(|x|)|_{z=y}\label{eqdu1}
\le  \de +\ai_0,\\
||Du|^2-|\phi'(|x|/\lam_y)|^2|&=\left||Du|^2-|D_{x,z}\phi_{\lam_y}(|x|)|_{z=y}^2\right|\\&\le\left|D_{x,z}(u-\phi_{\lam_y}(|x|))|_{z=y}\right|\left(|Du|+|D_{x,z}\phi_{\lam_y}(|x|)|_{z=y}\right)\\&\le\de(2\ai_0+\de).\label{eqdu2}
	\end{align}
	thus,
\begin{align*}
&\left|\frac{1+{\phi'(|x|/\lam_y)}^2}{1+|Du|^2}-1\right|=\left|\frac{1+|D_{x,z}\phi_{\lam_y}(|x|)|_{z=y}^2}{1+|Du|^2}-1\right|\\=&\frac{\left||Du|^2-|D_{x,z}\phi_{\lam_y}(|x|)|_{z=y}^2\right|}{1+|Du|^2}\\\le&\frac{\left|D_{x,z}(u-\phi_{\lam_y}(|x|))|_{z=y}\right|\left(|Du|+|D_{x,z}\phi_{\lam_y}(|x|)|_{z=y}\right)}{1+|Du|^2}\\
	<&\de(2\ai_0+\de),
\end{align*}
and finally
\begin{align*}
	&	\left|
\phi_{\lam_y}	D^2u(\na u,\na u)-\phi(|x|/\lam_y)\phi''(|x|/\lam_y)|\ri{\na u,\na|x|}|^2\right|\\
	=&	\left|
	\phi_{\lam_y}D^2u(\na u,\na u)-\phi_{\lam_y}(|x|)D_{x,z}^2\phi_{\lam_y}(|x|)\ri{\na u,\na|x|}|^2|_{z=y}\right|\\
	=&\phi_{\lam_y}\left|D_{x,z}^2(u-\phi_{\lam_y}(|x|))(\na u,\na u)|_{z=y}\right|\\
	<&\lam_y(1+\ai_0|x|/\lam_y)\left|D_{x,z}^2(u-\phi_{\lam_y}(|x|))(\na u,\na u)|_{z=y}\right|\\
	\le&\max\{1,\ai_0\}(\lam_y+|x|)\left|D_{x,z}^2(u-\phi_{\lam_y}(|x|))(\na u,\na u)|_{z=y}\right|\\
	<&\max\{1,\ai_0\}\de|\na u|^2.
\end{align*}
On the other hand, by \cite[Equation 4.3]{LSfr}, we have
\begin{align*}
	\frac{\phi''\phi}{1+(\phi')^2}+\frac{n-1}{r}\phi\phi'={m-1}.
\end{align*}
This implies that $\frac{\phi''\phi}{1+(\phi')^2}\le m-1.$
Thus, we can evaluate
\begin{align*}
&	\frac{u}{1+(Du\cdot Du)}D^2u(\na u,\na u)\\
=&\frac{u}{\phi_{\lam_y}}\frac{1+{\phi'(|x|/\lam_y)}^2}{1+|Du|^2}\frac{\phi_{\lam_y}D^2u(\na u,\na u)}{1+{\phi'(|x|/\lam_y)}^2}\\
=&(1+O(\de))(1+O(\de))\frac{\phi(|x|/\lam_y)\phi''(|x|/\lam_y)|\ri{\na u,\na|x|}|^2+O(\de)|\na u|^2}{1+{\phi'(|x|/\lam_y)}^2}\\
\le&(1+O(\de))\frac{\phi(|x|/\lam_y)\phi''(|x|/\lam_y)|+O(\de)}{1+{\phi'(|x|/\lam_y)}^2}|\na u|^2\\
=&\left(\frac{\phi(|x|/\lam_y)\phi''(|x|/\lam_y)|}{1+{\phi'(|x|/\lam_y)}^2}+O(\de)\right)(1+O(\de))|Du|^2\\=&\frac{\phi(|x|/\lam_y)\phi''(|x|/\lam_y)|}{1+{\phi'(|x|/\lam_y)}^2}|Du|^2+O(\de)|Du^2|.
\end{align*}Here big $O$ depends only on $m,n.$
\end{proof}
Lemma \ref{lemuduu} tells us we need to estimate $\frac{\phi''\phi}{1+(\phi')^2}.$ The idea is that we  want that the estimate, after comparison, depends roughly only on $|Du|^2$. Equivalently, we want to bound $\frac{\phi''\phi}{1+(\phi')^2}$ using functions of $(\phi')^2$.
\begin{lem}\label{lemphip}
	We have
	\begin{align*}
		\frac{\phi''\phi}{1+(\phi')^2}\le\frac{2(m-1)\left(\ai_0^2-(\phi')^2\right)}{2n\ai_0^2+(m-n-1)(\phi')^2}
	\end{align*}
\end{lem}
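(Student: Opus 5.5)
The plan is to use the ODE of Fact \ref{fctphi} to eliminate $\phi''$, and then to view everything as a first-order ODE in the variable $p=\phi'$. Write $p=\phi'(r)$, $B=\phi\phi'/r$ and $A=\frac{\phi''\phi}{1+p^2}$. By Fact \ref{fctphi}, $A=(m-1)-(n-1)B$. Using $(n-1)\ai_0^2=m-1$, a short algebraic manipulation shows that the claimed inequality is equivalent to the lower bound
\begin{align*}
B\ \ge\ G(p):=\frac{(m-1)\left(2(m-1)+(m-n+1)p^2\right)}{(n-1)\left(2n\ai_0^2+(m-n-1)p^2\right)},\qquad p\in(0,\ai_0).
\end{align*}
Both sides of this inequality agree at the two ends. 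As $r\to0$ we have $p\to0$, $B\to\phi''(0)=\frac{m-1}{n}$, and $G(0)=\frac{m-1}{n}$. As $r\to\infty$ we have $p\to\ai_0$ and $\phi/r\to\ai_0$, so $B\to\ai_0^2$, while a direct computation gives $G(\ai_0)=\ai_0^2$. The inequality is therefore sharp at both ends, and a barrier/comparison argument is the natural route.

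Since $\phi''>0$ by Fact \ref{fctphi}, $p$ is strictly increasing in $r$, so $p$ can serve as the independent variable on $(0,\ai_0)$. Differentiating gives
\begin{align*}
B'=\frac{p^2+(1+p^2)A-B}{r},\qquad p'=\frac{(1+p^2)A}{\phi}.
\end{align*}
Dividing, and using $\phi/r=B/p$, yields the autonomous equation
\begin{align*}
\frac{dB}{dp}=\frac{B\left(p^2+(1+p^2)A-B\right)}{p(1+p^2)A},\qquad A=(m-1)-(n-1)B.
\end{align*}
Here $A>0$ on the trajectory because $\phi''>0$. I will then show that $G$ is a strict subsolution of this equation on $(0,\ai_0)$, in the sense that $\frac{dG}{dp}<\Phi(p,G(p))$, where $\Phi(p,B)$ denotes the right-hand side above. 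The main obstacle is to verify this as an explicit rational-function inequality in $p^2$. After clearing the positive denominators it becomes a polynomial inequality in $t=p^2/\ai_0^2\in[0,1]$, which I expect to be quadratic or cubic. I would try to exhibit it as a quadratic whose coefficient combinations are nonnegative, so that Fact \ref{fctl} applies, possibly after first factoring out a factor of $t$.

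With the subsolution property in hand, the comparison goes as follows. Suppose $B(p_1)<G(p_1)$ for some $p_1$. Moving toward smaller $p$, the difference $B-G$ cannot return to $0$: at a point where $B=G$ we would have $\frac{d}{dp}(B-G)>0$, which is incompatible with $B-G$ being negative just to the right of that point. So $B<G$ on all of $(0,p_1]$. The second delicate point is the singular endpoint $p=0$, where both $B-G\to0$ and the ODE degenerates (there is a factor $p$ in the denominator). To rule out this scenario I will expand both $B$ and $G$ to order $p^2$ near $0$. For $B$ this uses $\phi'''(0)=0$ and the value of $\phi^{(4)}(0)$ from Fact \ref{fctphi}, which gives
\begin{align*}
B=\frac{m-1}{n}+\left(\tfrac12(\phi''(0))^2+\tfrac16\phi''(0)\phi^{(4)}(0)\right)r^2+O(r^3),
\end{align*}
rewritten in terms of $p\approx\phi''(0)r$. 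If the $p^2$-coefficient of $B-G$ is strictly positive, then $B>G$ near $0$, which contradicts $B<G$ on $(0,p_1]$. If the leading coefficients coincide, I would instead use the ODE directly near $p=0$ to show that the trajectory starting at $(0,\frac{m-1}{n})$ leaves on the side $B\ge G$. Either way this gives $B\ge G$ on $(0,\ai_0)$, which is equivalent to the lemma.
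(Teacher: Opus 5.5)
This is essentially the paper's proof. Your $A$ is the paper's $\Phi$ and $B=\frac{(m-1)-\Phi}{n-1}$, so your strict-subsolution condition for $G$ is exactly the paper's crossing condition $\frac{d}{d\Si}(\Phi-\Psi)<0$ at $\Phi=\Psi$ (with $\Si=p^2$). The paper reduces that condition, as you anticipate, to a quadratic $L(t)$ in $t=p^2/\ai_0^2$ with $L_0,\ L_1+2L_0,\ L_2+L_1+L_0<0$ (using $n\ge 3$, $m+n\ge 8$) and Fact \ref{fctl}. It also settles your two open points: the expansion through $\phi^{(4)}(0)$ gives the strictly positive $p^2$-coefficient $\frac{(m+n-1)(n-2)}{2n^2(n+2)}$ for $B-G$, so your fallback case never arises.

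One slip: the $r^2$-coefficient of $B=\phi\phi'/r$ is $\frac12(\phi''(0))^2+\frac16\phi^{(4)}(0)$, without the extra factor $\phi''(0)$ in the second term.
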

\begin{proof}	
Set 
\begin{align*}
	\Phi=&\frac{\phi''\phi}{1+(\phi')^2},\Si=(\phi')^2
	,\Psi(\Si)=\frac{2(m-1)(\ai_0^2-\Si)}{2n\ai_0^2+(m-n-1)\Si}.
\end{align*}
Recall Fact \ref{fctphi}. Since $\phi'(r)>0$ for $r>0$ and $\phi''>0$ we can regard $\Si=(\phi')^2$ as a variable. Then our goal is to prove that $\Phi(\Si)-\Psi(\Si)\le 0$  for $\Si\ge 0.$ Let us first see what happens at $\Si=0,$ i.e., $r=0.$ We have
\begin{align*}
	\Phi(\Si)|_{\Si=0}&=\phi''(0)=\frac{m-1}{n},\\
	\Psi(\Si)|_{\Si=0}&=\frac{2(m-1)^2}{2n(m-1)}=\frac{m-1}{n}.
\end{align*}
Thus, $$	\Phi(\Si)|_{\Si=0}-\Psi(\Si)|_{\Si=0}=0.$$ Furthermore, we have
\begin{align}
	&\frac{d\Phi}{d\Si}\bigg|_{\Si=0}=\lim_{\Si\to 0}\frac{\Phi(\Si)-\Phi(0)}{\Si}\\
	=&\lim_{r\to 0}\frac{\frac{\phi(r)\phi''(r)}{1+(\phi'(r))^2}-\phi''(0)}{(\phi'(r))^2}\\
	=&\lim_{r\to 0}\frac{\left(1+\frac{1}{2}\phi''(0)r^2+O(r^3)\right)\left(\phi''(0)+\frac{1}{2}\phi^{(4)}(0)r^2+O(r^3)\right)\left(1-(\phi''(0))^2r^2+O(r^3)\right)-\phi''(0)}{(\phi''(0))^2r^2+o(r^3)}\\
	=&\frac{\phi^{(4)}(0)+(\phi''(0)^2)-2(\phi''(0))^3}{2(\phi''(0))^2}\\
	=&-\frac{(n-1) (m+n-1)}{n (n+2)}.\label{eqdphi0}
\end{align}
On the other hand, we have
\begin{align}
	\frac{d\Psi}{d\Si}=-\frac{2 (m-1)^2 (n-1) (m+n-1)}{((n-1) \Si (-m+n+1)-2
		(m-1) n)^2}.
\end{align}
At $\Si=0,$ we have
\begin{align}\label{eqdpsi0}
		\frac{d\Psi}{d\Si}\bigg|_{\Si=0}=-\frac{2 (m-1)^2 (n-1) (m+n-1)}{(2
			(m-1) n)^2}=-\frac{(n-1)(m+n-1)}{2n^2}.
\end{align}
Combining (\ref{eqdphi0}) and (\ref{eqdpsi0}), we have
\begin{align*}
	\frac{d}{d\Si}(\Phi-\Psi)\bigg|_{\Si=0}=(n-1)(m+n-1)\left(\frac{1}{2n^2}-\frac{1}{n(n+2)}\right)<0.
\end{align*}Thus $\Phi(\Si)-\Psi(\Si)<0$ for $\Si$ close to $0.$ By \cite[p.89 I. Lemma]{WWode}, to prove that $\Phi(\Si)-\Psi(\Si)\le 0$ for all $\Si,$ it suffices to show that 
\begin{align*}
\frac{d}{d\Si}(	\Phi(\Si)-\Psi(\Si)
)<0,\end{align*}
if $\Phi=\Si.$

Recall \cite[Equation 4.3]{LSfr}:
\begin{align*}
	\frac{\phi''\phi}{1+(\phi')^2}+\frac{n-1}{r}\phi\phi'={m-1}.
\end{align*}
	Differentiating again gives
\begin{align*}
&\frac{d\Phi}{dr}=	\left(	\frac{\phi''\phi}{1+(\phi')^2}\right)'\\=&-\frac{n-1}{r}(\phi')^2-\frac{n-1}{r}\phi\phi''+\frac{n-1}{r^2}\phi\phi'\\
=&-\frac{n-1}{r}\Si-\frac{n-1}{r}(1+\Si)\Phi+\frac{m-1}{r}-\frac{1}{r}\Phi\\
=&-\frac{n-1}{r}\Si-\frac{n+(n-1)\Si}{r}\Phi+\frac{m-1}{r}.
\end{align*}
And
\begin{align*}
	\frac{d\Si}{dr}=2\phi'\phi''=2\phi'\frac{1+\Si}{\phi}\Phi=2\Si\frac{1+\Si}{\phi\phi'}\Phi=\frac{n-1}{r}\frac{2\Si(1+\Si)}{m-1-\Phi}\Phi
\end{align*}
Thus, we have
\begin{align*}
	\frac{d\Phi}{d\Si}=&\frac{\frac{d\Phi}{dr}}{\frac{d\Si}{dr}}=\frac{(m-1-\Phi)\left(-(n-1)\Si-\left(n+(n-1)\Si\right)\Phi+(m-1)\right)}{2(n-1)\Si(1+\Si)\Phi}.
\end{align*}
Direct calculation gives
\begin{align*}
&\frac{d}{d\Si}(\Phi-\Psi)|_{\Phi=\Psi}\\=&\frac{(n-1) (m+n-1) }{4
	(\Si+1) ((n-1) \Si (m-n-1)+2 (m-1) n)^2}\\&\cdot\left(-2 (m-1) \Si \left(m (2
	n-5)-2 n^2+n+5\right)-(n-1) \Si^2
	\left((m-n)^2-1\right)-4 (m-1)^2 (n-2)\right).
\end{align*}
The first factor is positive. We need to show that the second factor is negative. Set $\Si=\frac{m-1}{n-1}t.$ Then $t\in[0,1]$. The second factor is
\begin{align*}
	&\left(-2 (m-1) \Si \left(m (2
	n-5)-2 n^2+n+5\right)-(n-1) \Si^2
	\left((m-n)^2-1\right)-4 (m-1)^2 (n-2)\right)\\=&\frac{(m-1)^2 \left(t^2 \left(2 m n+(1-m)
		m-m-n^2+1\right)+t \left(-4 m n-10 (1-m)+4 n^2-2
		n\right)-4 n^2+12 n-8\right)}{n-1}.
\end{align*}
Set $$L(t)=t^2 \left(2 m n+(1-m)
m-m-n^2+1\right)+t \left(-4 m n-10 (1-m)+4 n^2-2
n\right)-4 n^2+12 n-8,$$ with $L_2,L_1,L_0$ be the coefficients of $t^2,t,t^0$ respectively. 
By Fact \ref{fctl}, it suffices to show that 
\begin{align*}
	L_2+L_1+L_0,L_1+2L_0,L_0<0.
\end{align*}We have
\begin{align*}
	L_0=&-4n^2+12n-8=-4\left(n-\frac{3}{2}\right)^2+1<0,
\end{align*}for $n\ge 3.$
For $n\ge 3,$ we have
\begin{align*}
	L_1+2L_0=&m (10-4 n)+(-4 n+22 )n-26\\=&(10-4n)(m+n-8)-20n+54\\
	\le&-6\\<&0.
\end{align*}
Finally, we have
\begin{align*}
	L_2+L_1+L_0=-17-(-10+(m+n))(m+n)\le -1<0.
\end{align*}
\end{proof}
\begin{lem}\label{lemudum}
For
\begin{align*}
	R(s)=\frac{2(m-1)(\ai_0^2-s^2)}{2n(\ai_0^2-s^2)+(m+n-1)s^2}.
\end{align*}	we have	\begin{align}\label{eqfl}
u\De u-(m-1)\le &R(\phi'(|x|/\lam_y))\py+O(\de),\\
\label{eqsl}		u\De u-(m-1)\le& R(|Du|)|Du|^2+O(\de)|Du|^2,\\
	\label{eqls}	u\De u-(m-1)\le& \frac{m-1}{n}|Du|^2+O(\de)|Du|^2,
	\end{align}with big O only depending on $m,n.$
\end{lem}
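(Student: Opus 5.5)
The plan is to feed Lemma \ref{lemphip} into Lemma \ref{lemuduu}, and then to trade $\phi'(|x|/\lam_y)$ for $|Du|$ using the comparison estimates (\ref{eqdu1}) and (\ref{eqdu2}) from the proof of Lemma \ref{lemuduu}.

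\textbf{Identifying $R$ with $\Psi$.} The denominator of $R$ is
\begin{align*}
2n(\ai_0^2-s^2)+(m+n-1)s^2=2n\ai_0^2+(m-n-1)s^2.
\end{align*}
So $R(s)=\Psi(s^2)$ with $\Psi$ as in the proof of Lemma \ref{lemphip}, and Lemma \ref{lemphip} reads $\Phi\le R(\phi')$, where $\Phi=\frac{\phi\phi''}{1+(\phi')^2}$. This denominator is bounded below by a positive constant depending only on $m,n$ for $s\in[0,\ai_0+1]$:
\begin{itemize}
\item if $m-n-1\ge0$, it is at least $2n\ai_0^2$;
\item otherwise, at $s=\ai_0$ it equals $(m+n-1)\ai_0^2>0$, and by continuity it stays positive on $[0,\ai_0+\de_0]$ for some small $\de_0$ depending only on $m,n$.
\end{itemize}
Hence $R$ is Lipschitz on $[0,\ai_0+\de]$ for $\de$ small, with constant depending only on $m,n$.

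\textbf{Proof of (\ref{eqfl}).} Start from Lemma \ref{lemuduu}. Since $\phi''>0$ and $\Phi\le m-1$ by the ODE (as noted in that proof), we have $0\le\Phi\le m-1$. By (\ref{eqdu2}), $|Du|^2\le\py+\de(2\ai_0+\de)$. Therefore $\Phi|Du|^2\le\Phi\,\py+O(\de)$, and Lemma \ref{lemphip} bounds $\Phi\,\py$ by $R(\pyy)\py$. The error term $O(\de)|Du|^2$ is $O(\de)$ by (\ref{eqdu1}).

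\textbf{Proof of (\ref{eqsl}).} Instead keep the factor $|Du|^2$ and use $\Phi\le R(\pyy)$. From the gradient comparison $|D_{x,z}(u-\phi_{\lam_y})|<\de$ we get $\bigl||Du|-\pyy\bigr|<\de$. Both arguments lie in $[0,\ai_0+\de]$, so the Lipschitz bound gives $R(\pyy)\le R(|Du|)+O(\de)$, and hence $\Phi|Du|^2\le R(|Du|)|Du|^2+O(\de)|Du|^2$.

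\textbf{Proof of (\ref{eqls}).} It suffices to show $R(s)\le\frac{m-1}{n}$ on $[0,\ai_0+\de]$.
\begin{itemize}
\item For $s\le\ai_0$, the numerator $2(m-1)(\ai_0^2-s^2)$ is nonnegative and the denominator is at least $2n(\ai_0^2-s^2)$, since $(m+n-1)s^2\ge0$. So $R(s)\le\frac{m-1}{n}$.
\item For $s\in(\ai_0,\ai_0+\de]$, the numerator is negative and the denominator positive, so $R(s)\le 0$.
\end{itemize}

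The main obstacle is keeping the error terms uniform in $\de$ near $|Du|\approx\ai_0$. There the denominator of $R$ must stay away from zero when $m<n+1$, which is where the positive lower bound from the first step is needed. A secondary point is that $O(\de)|Du|^2$ may be absorbed into $O(\de)$ only because of (\ref{eqdu1}).
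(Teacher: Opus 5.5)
Your proposal is correct and follows the paper's own route. You plug Lemma \ref{lemphip} (via $R(s)=\Psi(s^2)$) into Lemma \ref{lemuduu}, use $\bigl||Du|-\phi'(|x|/\lam_y)\bigr|<\de$ together with (\ref{eqdu1})--(\ref{eqdu2}) to move between $\phi'$ and $|Du|$, and your explicit check that the denominator of $R$ stays bounded away from zero near $s=\ai_0$ supplies a step the paper leaves implicit.

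The only slip is the passing claim that this denominator is positive on all of $[0,\ai_0+1]$. That fails when $m<n+1$ (e.g.\ $m=2$, $n=7$), but you only ever use it on $[0,\ai_0+\de_0]$, where your continuity argument is valid.
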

It turns out later the first line (\ref{eqfl}) is the most useful. (\ref{eqsl}) will not be used at all. (\ref{eqls}) will only be used once.
\begin{proof}
Apply Lemma \ref{lemuduu} and Lemma \ref{lemphip}. From 
\begin{align*}
	||Du|-|\phi'(|x|/\lam_y)||&=\left||Du|-|D_{x,z}\phi_{\lam_y}(|x|)|_{z=y}\right|\\&\le\left|D_{x,z}(u-\phi_{\lam_y}(|x|))|_{z=y}\right|\\&\le\de,
\end{align*}we deduce (\ref{eqfl})
 and for \ref{eqsl} 
	\begin{align*}
		u\De u-(m-1)\le&\frac{2(m-1)(\ai_0^2-\phi'(|x|/\lam_y)^2)}{2n\ai_0^2+(m-n-1)\phi'(|x|/\lam_y)}|Du|^2+O(\de)|Du|^2\\
		=&\frac{2(m-1)(\ai_0^2-|Du|^2+O(\de))}{2n\ai_0^2+(m-n-1)|Du|^2+O(\de)}|Du|^2+O(\de)|Du|^2\\
		=&\frac{2(m-1)(\ai_0^2-|Du|^2)}{2n\ai_0^2+(m-n-1)|Du|^2}|Du|^2+O(\de)|Du|^2,
	\end{align*}with big O only depending on $m,n.$
	(\ref{eqls}) follows directly.
\end{proof}
The special form of $R$  will be used to apply Cauchy Schwartz to (\ref{eqfl}) as follows
\begin{fact}\label{fctcs}
	For $a,b,a',b'\ge 0,$ we have
	\begin{align*}
	&\left(	a(\ai_0^2-\phi'(|x|/\lam_y)^2)+b\py
\right)\left(		a'(\ai_0^2-\phi'(|x|/\lam_y)^2)+b'\py\right)\\\ge&\left(\sqrt{a'b}+\sqrt{ab'}\right)^2(\ai_0^2-\phi'(|x|/\lam_y)^2)\py	\end{align*}
\end{fact}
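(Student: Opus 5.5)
Write $P=\ai_0^2-\phi'(|x|/\lam_y)^2$ and $Q=\phi'(|x|/\lam_y)^2$. Both are nonnegative: $Q\ge0$ trivially, and $P\ge 0$ because $0<\phi'<\ai_0$ by Fact \ref{fctphi}. The claimed inequality is then
\begin{align*}
(aP+bQ)(a'P+b'Q)\ge\left(\sqrt{a'b}+\sqrt{ab'}\right)^2PQ,
\end{align*}
which is a purely algebraic statement about four nonnegative reals.

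The plan is to expand the left side:
\begin{align*}
(aP+bQ)(a'P+b'Q)=aa'P^2+bb'Q^2+(ab'+a'b)PQ.
\end{align*}
Expanding the right side gives
\begin{align*}
\left(\sqrt{a'b}+\sqrt{ab'}\right)^2PQ=(a'b+ab')PQ+2\sqrt{aa'bb'}\,PQ.
\end{align*}
The cross terms $(ab'+a'b)PQ$ cancel. It then suffices to show
\begin{align*}
aa'P^2+bb'Q^2\ge 2\sqrt{aa'bb'}\,PQ.
\end{align*}
This is AM--GM, since
\begin{align*}
aa'P^2+bb'Q^2-2\sqrt{aa'bb'}\,PQ=\left(\sqrt{aa'}\,P-\sqrt{bb'}\,Q\right)^2\ge0.
\end{align*}
The square roots are legitimate because $a,a',b,b'\ge0$, and $P,Q\ge0$ ensures $\sqrt{aa'}P$ and $\sqrt{bb'}Q$ are the square roots of $aa'P^2$ and $bb'Q^2$.

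Alternatively, the same bound follows from Cauchy--Schwarz applied to the vectors $(\sqrt{aP},\sqrt{bQ})$ and $(\sqrt{b'Q},\sqrt{a'P})$. Their inner product is $\sqrt{ab'PQ}+\sqrt{a'bPQ}$, and squaring gives the desired inequality.

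There is no real obstacle here. The only point that needs care is checking that $P\ge0$ so that all the square roots make sense, and that is exactly the bound $\phi'<\ai_0$ from Fact \ref{fctphi}.
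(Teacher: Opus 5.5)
Your proof is correct and is essentially the paper's argument. The paper only notes that $\ai_0^2-\phi'(|x|/\lam_y)^2\ge 0$ and invokes Cauchy--Schwarz, which is your second route; your completed square $\left(\sqrt{aa'}\,P-\sqrt{bb'}\,Q\right)^2\ge 0$ makes the same step explicit, and that identity in fact holds for all real $P,Q$, so the sign of $P$ is needed only for the vector form with $\sqrt{aP}$.
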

\begin{proof}
	Note that $\ai_0^2-\phi'(|x|/\lam_y)^2,|Du|\ge0$. The rest follows directly from Cauchy-Schwarz.
\end{proof}
\section{Calculating $\di_g \frac{\na^g F}{\no{\na^g F}_g}$}\label{seccalc}
 We separately define the two branches of $E$.
\begin{defn}\label{defnfg}
Define for $(u,v)\in\R\times\R$
\begin{align*}
	F^+(u,v)=&\left(u^2-v^2\right)u^d,\\
	F^-(u,v)=&\left(u^2-v^2\right)v^d.
\end{align*}
We will use $F$ without superscript to denote both $F^+$ and $F^-.$
\end{defn}
Then we have
\begin{align*}
	E(x,\xi,y)=\begin{cases}
		F^+(u(x,y),|\xi|),&\textnormal{if }u(x,y)-|\xi|\ge 0,\\
		F^-(u(x,y),|\xi|),&\textnormal{if }u(x,y)-|\xi|\le 0.\\
	\end{cases}
\end{align*}
Let us calculate 	$\di_g \frac{\na^g F}{\no{\na^g F}_g}$. The following standard facts about $v=|\xi|,r=|x|,$ will be used several times without explaining.
\begin{fact}\label{fctxi}
	We have \begin{align*}
|Dv|^2=&1,	\De v=\frac{m-1}{v},D^2v(\na v,\na v)=0,\\
|Dr|^2=&1,	\De r=\frac{n-1}{r},D^2r(\na r,\na r)=0.
	\end{align*}
\end{fact}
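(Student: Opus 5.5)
The identities for $v=|\xi|$ and $r=|x|$ are elementary coordinate computations in the flat metric. I treat $v$ in detail; the case of $r$ is identical with $m$ replaced by $n$ and $\xi$ by $x$.

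\emph{Gradient.} Away from $\xi=0$ I compute $D_\alpha v=\xi_\alpha/v$, while the $x$- and $y$-derivatives vanish. Hence $\na v=(0,\xi/v,0)$ and
\begin{align*}
|Dv|^2=\sum_\alpha \xi_\alpha^2/v^2=1.
\end{align*}

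\emph{Hessian.} Differentiating once more gives
\begin{align*}
D_{\alpha\beta}v=\frac{\de_{\alpha\beta}}{v}-\frac{\xi_\alpha\xi_\beta}{v^3},
\end{align*}
and all mixed and other second derivatives vanish. This is exactly the formula $D^2|x|(\nu,\nu)=\frac{|\pi_x\nu|^2-|\ri{\nu,\na|x|}|^2}{|x|}$ already recorded in the proof of Lemma \ref{lemuduu}, with $x$ replaced by $\xi$.

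\emph{Laplacian.} Taking the trace of the Hessian gives
\begin{align*}
\De v=\frac{m}{v}-\frac{v^2}{v^3}=\frac{m-1}{v}.
\end{align*}

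\emph{Hessian along the gradient.} Evaluating the Hessian on $\na v$, where $\pi_\xi\na v=\xi/v$, gives
\begin{align*}
D^2v(\na v,\na v)=\frac{1-\ri{\xi/v,\xi/v}^2}{v}=\frac{1-1}{v}=0.
\end{align*}
Equivalently, $v$ is linear along radial rays, so its second derivative in the radial direction vanishes.

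\emph{The function $r$.} The same computation in the $x$-variables gives $|Dr|^2=1$, $\De r=\frac{n-1}{r}$ and $D^2r(\na r,\na r)=0$.

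\emph{Where the identities hold.} They are valid on the sets $\xi\neq0$ and $x\neq0$ respectively, which is all that is needed: these sets have complements of codimension at least $2$, so they are negligible for Fact \ref{fctsubc}. No step here is an obstacle. The only point needing care is to state explicitly that the identities are used only off $\{\xi=0\}$ and $\{x=0\}$.
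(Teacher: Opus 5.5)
Your coordinate computation is correct, and it is the standard verification; the paper gives no proof, calling these "standard facts" and using them without explanation. Your remark that the identities hold only off $\{\xi=0\}$ and $\{x=0\}$ (sets of codimension $m\ge 2$ and $n\ge 3$) matches how the paper later puts these sets into the singular set of $\frac{\na^g E}{\no{\na^g E}_g}$ in Section \ref{secwrap}.
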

\subsection{Calculating  $\frac{\na^g F^+}{\no{\na^g F^+}_g}$}
Let us first start with the gradient. We have
\begin{align*}
	D_iF=&F_uD_iu,
	D_\ai F=F_v D_\ai v.
\end{align*}
For
\begin{align*}
	F^+(u,v)=(u^2-v^2)u^d,
\end{align*}
we have
\begin{align*}
	F_u^+=&(d+2)u^{d+1}-du^{d-1}v^2=u^{d-1}\left((d+2)u^2-dv^2\right),\\
	F_v^+=&-2vu^d.
\end{align*}
In the metric $g=	dx^2+dy^2+fd\xi^2$, we have
\begin{align}
	\na^gF^+=&\sum_{i=1}^nD_iF\pd_{x_i}+\sum_{j=1}^lD_jF\pd_{y_j}+\frac{1}{f}\sum_{\ai=1}^mD_\ai F\pd_{\xi_\ai}\\
	=&F_u\na u+f\m F_v\na v\\
	=&u^{d-1}\left((d+2)u^2-dv^2\right)\na u-u^{d-1}f\m2vu\na v.\label{eqnae1}
\end{align}
Thus, we have
\begin{align*}
	\no{\na^gF^+}_g=&\sqrt{\sum_{i=1}^{n+l}D_iFD_iF+\frac{1}{f}\sum_{\ai=1}^mD_\ai FD_\ai F}=\sqrt{F_u^2|Du|^2+f\m F_v^2}\\
	=&u^{d-1}\sqrt{\left((d+2)u^2-dv^2\right)^2|Du|^2+f\m4u^2v^2}.
\end{align*}where we use $|D{v}|=1$ at the right-hand side.
This gives
\begin{align*}
	\frac{\na^gF^+}{\no{\na^gF^+}_g}=\frac{\left((d+2)u^2-dv^2\right)\na u-f\m2vu\na v}{\sqrt{\left((d+2)u^2-dv^2\right)^2|Du|^2+f\m4u^2v^2}}.
\end{align*}
\subsection{Calculating $\frac{\na^g F^-}{\no{\na^g F^-}_g}$}
For
\begin{align*}
	F(u,v)=(u^2-v^2)v^d,
\end{align*}
we have
\begin{align}
	F_u^-=&2uv^d,\\
	F_v^-=&dv^{d-1}u^2-(d+2)v^{d+1}=v^{d-1}\left(du^2-(d+2)v^2\right).\label{eqnae2}
\end{align}
Similar calculations as in the last subsection give
\begin{align*}
		\frac{\na^gF^-}{\no{\na^gF^-}_g}=\frac{2uv\na u+f\m\left(du^2-(d+2)v^2\right)\na v}{\sqrt{4u^2v^2|Du|^2+f\m\left(du^2-(d+2)v^2\right)^2}}
\end{align*}
To sum it up, we have shown that 
\begin{fact}\label{fctpq}
	We have \begin{align*}
		\frac{\na^gF}{\no{\na^gF}_g}=\frac{P(u,v)\na u+f\m Q(u,v)\na{v}}{\sqrt{(P(u,v))^2|Du|^2+f\m (Q(u,v))^2}},
	\end{align*}where $P$ and $Q$ are quadratic forms on $\R^2.$ And
	\begin{align*}
		P^+(u,v)=&
		(d+2)u^2-dv^2,Q^+(u,v)=
		-2uv,\\
		P^-(u,v)=&
		2uv,Q^-(u,v)=
		du^2-(d+2)v^2.
	\end{align*}
\end{fact}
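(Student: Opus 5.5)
The claim is a direct computation from the gradient formulas (\ref{eqnae1}) and (\ref{eqnae2}); the only work is to pull out a common scalar factor and normalize. In the metric $g=dx^2+f\,d\xi^2+dy^2$, since $u$ depends only on $(x,y)$ and $v=|\xi|$ depends only on $\xi$, we have
\begin{align*}
\na^g F=F_u\na u+f\m F_v\na v .
\end{align*}
Here $\na u$ and $\na v$ are the flat gradients. Moreover, $\na u$ has no $\xi$-components and $\na v$ has only $\xi$-components, so
\begin{align*}
\no{\na u}_g=|Du|,\qquad \no{f\m\na v}_g^2=f\m|Dv|^2=f\m,\qquad \ri{\na u,\na v}_g=0 .
\end{align*}
Hence $\no{\na^g F}_g^2=F_u^2|Du|^2+f\m F_v^2$, using $|Dv|=1$ from Fact \ref{fctxi}.

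For $F^+$ I use $F^+_u=u^{d-1}\left((d+2)u^2-dv^2\right)$ and $F^+_v=-2vu^d=u^{d-1}(-2uv)$. The positive factor $u^{d-1}$ is common to both the numerator and the norm, so it cancels in the quotient. What remains is exactly $P^+=(d+2)u^2-dv^2$ and $Q^+=-2uv$.

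For $F^-$ I use $F^-_u=2uv^d=v^{d-1}(2uv)$ and $F^-_v=v^{d-1}\left(du^2-(d+2)v^2\right)$. Cancelling $v^{d-1}>0$ gives $P^-=2uv$ and $Q^-=du^2-(d+2)v^2$.

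All four of $P^\pm,Q^\pm$ are homogeneous of degree $2$ in $(u,v)$, so they are quadratic forms on $\R^2$. The only point needing care is positivity of the cancelled factors: $u>0$ and $v>0$ hold off the singular set and off $\{\xi=0\}$, where the expressions are genuinely defined, so the cancellation is legitimate there. There is no real obstacle.
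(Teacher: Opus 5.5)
Your proposal is correct and matches the paper's own computation: write $\na^g F=F_u\na u+f\m F_v\na v$, obtain $\no{\na^g F}_g^2=F_u^2|Du|^2+f\m F_v^2$ from $|Dv|=1$, and cancel the common factor $u^{d-1}$ for $F^+$ and $v^{d-1}$ for $F^-$. Your closing positivity remark is slightly off as stated, because $u=\ai_0|x|$ vanishes at points with $x=0$, $y\in K$, $\xi\neq 0$, which lie neither in $\sing M$ nor in $\{\xi=0\}$; this is harmless, since the $F^+$ formula is only used on $\{u\ge v\}$, where $u\ge v>0$ off $\{\xi=0\}$, and the $F^-$ formula only requires $v>0$.
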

\subsection{Calculating $\di_g \frac{\na^g F}{\no{\na^g F}_g}$}
\begin{lem}\label{lemdiv}
	We have
	\begin{align*}
		&f^{2}\left(P^2|Du|^2+f\m Q^2\right)^{\frac{3}{2}}\di_g	\frac{\na^gF}{\no{\na^gF}_g}
	\\=&\frac{m+1}{2}P(Q^2-P^2)Df\cdot Du+\left(Q^2P_u-PQ(P_v+Q_u)+P^2Q_v+P^3\frac{m-1}{u}+P^2Q\frac{m-1}{{v}}\right)f|Du|^2\\
	&+P\left(Q^2-P^2\right)f\De u+P^3\frac{m-1}{u}+Q^3\frac{m-1}{{v}}.
	\end{align*}
	Furthermore, if for any fixed $(x,y)$, we regard $f^{2}\left(P^2|Du|^2+f\m Q^2\right)^{\frac{3}{2}}\di_g	\frac{\na^gF}{\no{\na^gF}_g}	$ as a function of $(u,v,Du)$, we have
	\begin{align*}
		\left[f^{2}\left(P^2|Du|^2+f\m Q^2\right)^{\frac{3}{2}}\di_g	\frac{\na^gF}{\no{\na^gF}_g}	\right](u,u,Du)=0
	\end{align*}
\end{lem}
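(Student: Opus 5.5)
The plan is to compute the divergence directly in the coordinates $(x,\xi,y)$, using that the metric $g$ has volume density $f^{m/2}$ and that $f$ does not depend on $\xi$.

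\textbf{Setting up.} Write $X=\frac{\na^gF}{\no{\na^gF}_g}=A\na u+B\na v$, where by Fact \ref{fctpq}
\begin{align*}
W=\sqrt{P^2|Du|^2+f\m Q^2},\qquad A=\frac{P}{W},\qquad B=\frac{Q}{fW}.
\end{align*}
Since $\sqrt{\det g}=f^{m/2}$, we have
\begin{align*}
\di_gX=f^{-m/2}\di\left(f^{m/2}X\right)=\di X+\frac{m}{2}f\m Df\cdot X.
\end{align*}
Because $Df$ has no $\xi$-components, $Df\cdot\na v=0$, so the last term is $\frac m2 f\m A\,Df\cdot Du$.

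\textbf{Expanding the Euclidean divergence.} Next I expand $\di(A\na u)=A\De u+DA\cdot Du$ and $\di(B\na v)=B\De v+DB\cdot Dv$. Here $A$ and $B$ are functions of $(u,v,f,|Du|^2)$. By the chain rule,
\begin{align*}
D(|Du|^2)=2D^2u(\na u,\cdot),\qquad Du\cdot Dv=0,
\end{align*}
and Fact \ref{fctxi} gives $|Dv|^2=1$, $\De v=\frac{m-1}{v}$, and that $D^2u$ has no $\xi$-components. I then collect terms.
\begin{itemize}
\item From $\pd_u$ and $\pd_v$ of $P,Q$ one gets $Q^2P_u-PQ(P_v+Q_u)+P^2Q_v$, times $|Du|^2$ after clearing denominators.
\item The $\De u$ term has coefficient $PW^2$.
\item The $\De v$ term gives $Q^3\frac{m-1}{v}$ and $P^2Q\frac{m-1}{v}f|Du|^2$, up to the normalization.
\item The $f$-derivatives produce a $Df\cdot Du$ term.
\item The $|Du|^2$-derivative produces a term proportional to $-P^3D^2u(\na u,\na u)$.
\end{itemize}
Multiplying through by $f^2W^3$ should clear all denominators.

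\textbf{Eliminating the Hessian term.} The stated formula contains no $D^2u(\na u,\na u)$, and its $\De u$ coefficient is $P(Q^2-P^2)f$ rather than the naive one. So the key step is to substitute the Euler--Lagrange equation for $u,f$ quoted in the proof of Fact \ref{fctfu}:
\begin{align*}
\frac{f^2}{1+f|Du|^2}D^2u(\na u,\na u)=f\De u-\frac{m-1}{u}+\frac12\left(m+(1+f|Du|^2)\m\right)Df\cdot Du.
\end{align*}
This equation holds on all of $\R^n\times\R^m\times\R^l\setminus\sing M$, since $M$ is $g$-minimal. After the substitution:
\begin{itemize}
\item the $\De u$ coefficients should combine into $P(Q^2-P^2)f$;
\item the new $\frac{m-1}{u}$ terms should give $P^3\frac{m-1}{u}(1+f|Du|^2)$;
\item the $Df\cdot Du$ terms should combine into $\frac{m+1}{2}P(Q^2-P^2)$.
\end{itemize}
The main obstacle is this bookkeeping: the $(1+f|Du|^2)\m$ factors must cancel exactly against $W^2$-type factors. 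To control it, I would first do the computation with $f\equiv1$, which checks the $\De u$ and $\frac{m-1}{u}$ terms, and only then track the $Df$ terms.

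\textbf{Second claim.} At $v=u$ we have $P^+=2u^2$ and $Q^+=-2u^2$, and $P^-=2u^2$ and $Q^-=-2u^2$, so $Q^2=P^2$. Hence the $Df\cdot Du$ and $f\De u$ terms vanish. The remaining terms are
\begin{align*}
\left(Q^2P_u-PQ(P_v+Q_u)+P^2Q_v\right)f|Du|^2+\left(P^3+P^2Q\right)\frac{m-1}{u}f|Du|^2+(P^3+Q^3)\frac{m-1}{u}.
\end{align*}
Since $Q=-P$, both $P^3+P^2Q$ and $P^3+Q^3$ vanish. For the first bracket I compute the partials of each branch: at $v=u$, $P_u=Q_v$ and $P_v=Q_u$ for both $F^\pm$. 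With $Q=-P$ the bracket becomes $P^2(2P_u+P_v+Q_u)$, which evaluates to $4u^4(2(d+2)u-2du-2u)=0$, and similarly for $F^-$. Conceptually this is just the statement that the zero level set $\{v=u\}=M$ of $F$ is $g$-minimal.
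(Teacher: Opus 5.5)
Your derivation of the main identity is the same as the paper's: divergence with density $f^{m/2}$, expansion, and elimination of $-f^2P^3D^2u(\na u,\na u)$ via the Euler--Lagrange equation. The bookkeeping you flag closes exactly as you predict. Before substitution, the $Df\cdot Du$ and $f\De u$ coefficients are $\frac m2 fP^3|Du|^2+\frac{m+1}{2}PQ^2$ and $fP^3|Du|^2+PQ^2$. The substitution subtracts $\frac12P^3\bigl(m(1+f|Du|^2)+1\bigr)$ and $P^3(1+f|Du|^2)$ from them, and adds $P^3(1+f|Du|^2)\frac{m-1}{u}$.

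\textbf{The problem is your explicit check of the second claim.} The identities $P_u=Q_v$ and $P_v=Q_u$ at $v=u$ are false for both branches:
\begin{itemize}
\item for $F^+$, $P_u=2(d+2)u$, $P_v=-2du$, and $Q_u=Q_v=-2u$;
\item for $F^-$, $P_u=P_v=2u$, $Q_u=2du$, and $Q_v=-2(d+2)u$.
\end{itemize}
So $P^2(2P_u+P_v+Q_u)$ does not vanish; for $F^+$ it equals $4u^4(2d+6)u$. Your displayed evaluation is also wrong on its own terms: $2P_u=4(d+2)u$, not $2(d+2)u$, and in any case $4u^4\bigl(2(d+2)u-2du-2u\bigr)=8u^5\neq0$. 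As written, the algebra does not establish the claim.

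\textbf{The correct reduction.} With $Q=-P$ the bracket equals $P^2(P_u+P_v+Q_u+Q_v)$. This vanishes because $(\pd_u+\pd_v)(P+Q)=0$ on the diagonal, and that holds because $P+Q$ vanishes identically there:
\begin{itemize}
\item $P^++Q^+=(u-v)\bigl((d+2)u+dv\bigr)$;
\item $P^-+Q^-=(u-v)\bigl(du+(d+2)v\bigr)$.
\end{itemize}
With this repair, your algebraic check is a valid, self-contained alternative to the paper's argument. The paper instead notes that the left-hand side at $v=u$ is a positive multiple of the $g$-mean curvature of the level set $\{F=0\}=M$, so it vanishes by minimality of $M$. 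That is the remark in your final sentence, and it would already suffice on its own.
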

\begin{proof}
	Recall that $$\di_g=\frac{1}{\sqrt{\det}}\sum_{j=1}^{n+l}D_j\left(\sqrt{\det g}E^j\right)+\frac{1}{\sqrt{\det}}\sum_{\be=1}^{m}D_\be\left(\sqrt{\det g}E^\be\right),$$ with $\sqrt{\det g}=f^{\frac{m}{2}}$ . Thus,	We have
	\begin{align*}
		&f^{2}\left(P^2|Du|^2+f\m Q^2\right)^{\frac{3}{2}}\di_g	\frac{\na^gF}{\no{\na^gF}_g}\\
		=&f^{2-\frac{m}{2}}\left(P^2|Du|^2+f\m Q^2\right)^{\frac{3}{2}}	\Bigg(\sum_{j=1}^{n+l} D_j\bigg(\frac{f^{\frac{m}{2}}PD_ju}{\sqrt{P^2|Du|^2+f\m Q^2}}\bigg) +\sum_{\be=1}^m D_\be \bigg(\frac{f^{\frac{m}{2}-1}QD_\be{v}}{\sqrt{P^2|Du|^2+f\m Q^2}}\bigg) \Bigg)\\
		=&f^{2-\frac{m}{2}}\left(P^2|Du|^2+f\m Q^2\right)^{\frac{3}{2}}\cdot\\
		&\sum_{j=1}^{n+l}\left(\frac{D_j\left(f^{\frac{m}{2}}PD_ju\right)}{{\sqrt{P^2|Du|^2+f\m Q^2}}}-\frac{1}{2}\frac{f^{\frac{m}{2}}PD_ju}{{\sqrt{P^2|Du|^2+f\m Q^2}}^3}D_j\left({{P^2|Du|^2+f\m Q^2}}\right)\right)\\
		&+f^{2-\frac{m}{2}}\left(P^2|Du|^2+f\m Q^2\right)^{\frac{3}{2}}\cdot\\
		&\sum_{\be=1}^{m}\left(\frac{D_\be\left(f^{\frac{m}{2}-1}QD_\be{v}\right)}{{\sqrt{P^2|Du|^2+f\m Q^2}}}-\frac{1}{2}\frac{f^{\frac{m}{2}-1}QD_\be{v}}{{\sqrt{P^2|Du|^2+f\m Q^2}}^3}D_\be\left({{P^2|Du|^2+f\m Q^2}}\right)\right)\\
		=&f^{2-\frac{m}{2}}\left({P^2|Du|^2+f\m Q^2}\right)\left(\frac{m}{2}f^{\frac{m}{2}-1}P\left(Df\cdot Du\right)+f^{\frac{m}{2}}P_u|Du|^2+f^{\frac{m}{2}}P\De u\right)\\
		&-f^{2}P\left(PP_u|Du|^4+P^2D^2u\left(\na u,\na u\right)-\frac{1}{2}f^{-2}Q^2\left(Df\cdot Du\right)+f\m QQ_u|Du|^2\right)\\
		&+f\left({P^2|D u|^2+f\m Q^2}\right)\left(Q_v+Q\frac{m-1}{{v}}\right)\\&-fQ\left(PP_v|Du|^2+f\m QQ_v\right)\\
		=&\left(\frac{m}{2}fP^3|Du|^2+\frac{m+1}{2}PQ^2\right)\left(Df\cdot Du\right)+\left(Q^2P_u-PQ(P_v+Q_u)+P^2Q_v+P^2Q\frac{m-1}{{v}}\right)f|Du|^2\\
		&+\left({fP^3|D u|^2+PQ^2}\right)f\De u-f^2P^3D^2u\left(\na u,\na u\right)+Q^3\frac{m-1}{{v}}.
	\end{align*}
	Recall that the Euler-Lagrange equation for $u,f$ is \cite[Proof of Theorem 3.7 just before equation (3)]{LSfr}
	\begin{align*}
		\frac{1}{2}\left(m+(1+f|Du|^2)\m\right)Df\cdot Du=-f\De u+\frac{f^2}{1+f|Du|^2}D^2u(\na u,\na u)+\frac{m-1}{u}.
	\end{align*}
	Rearranging gives
	\begin{align}
		f^2D^2u(\na u,\na u)=&\frac{1}{2}\left(m(1+f|Du|^2)+1\right)Df\cdot Du+(1+f|Du|^2)f\De u-(1+f|Du|^2)\frac{m-1}{u}
	\end{align}
	Inserting into $f^{2}\left(P^2|Du|^2+f\m Q^2\right)^{\frac{3}{2}}\di_g	\frac{\na^gF}{\no{\na^gF}_g}$ above, we deduce that
	\begin{align*}
		&f^{2}\left(P^2|Du|^2+f\m Q^2\right)^{\frac{3}{2}}\di_g	\frac{\na^gF}{\no{\na^gF}_g}
		\\=&\frac{m+1}{2}P(Q^2-P^2)Df\cdot Du+\left(Q^2P_u-PQ(P_v+Q_u)+P^2Q_v+P^3\frac{m-1}{u}+P^2Q\frac{m-1}{{v}}\right)f|Du|^2\\
		&+P\left(Q^2-P^2\right)f\De u+P^3\frac{m-1}{u}+Q^3\frac{m-1}{{v}}.
	\end{align*}
		For every $(x,y)\in\R^n\times\R^l$, the hypersurface $M:|\xi|=u(x,y)$ is minimal \cite[Theorem 3.7]{LSfr}. In other words, the level set $F=0$ has mean curvature $0$. Since $\div_g 	\frac{\na^gF}{\no{\na^gF}_g}$ equals the mean curvature scalar of levels sets of $F,$ regarding $[f^{2}\left(P^2|Du|^2+f\m Q^2\right)^{\frac{3}{2}}\di_gE]$ as a function of $u,v,Du$, we deduce
	\begin{align*}\left[f^{2}\left(P^2|Du|^2+f\m Q^2\right)^{\frac{3}{2}}\di_g	\frac{\na^gF}{\no{\na^gF}_g}\right](u,u,Du)=0.
	\end{align*}
\end{proof}
\section{Factorization in $u\ge v$}\label{secfact+}
Let us prove (\ref{eqfact}). In this section we deal with the region $u\ge v,$ i.e., $E=F^+$. First we fix a notation
\begin{defn}
For a function $B$ with variables $(u,v)$	overline over $B,$ i.e., $\ov{B},$ means subtracting its value at $(u,u)$, i.e.,
	\begin{align*}
		\ov{B}(u,v)=B(u,v)-B(u,u).
	\end{align*}
\end{defn}
\subsection{Basic factorization lemma for $F^+$}
We will prove that
\begin{lem}\label{lemb+}
	We have 
	\begin{align*}
		uf^{2}\left(P^2|Du|^2+f\m Q^2\right)^{\frac{3}{2}}\di_g\frac{\na^g F}{\no{\na^g F}_g}=(u^2-v^2) B^+,
	\end{align*}with $B^+\ge 0$. 
\end{lem}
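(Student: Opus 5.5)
The plan is to treat $uf^{2}\left(P^2|Du|^2+f\m Q^2\right)^{\frac{3}{2}}\di_g\frac{\na^g F^+}{\no{\na^g F^+}_g}$, given by Lemma \ref{lemdiv}, as a polynomial in $v$ with coefficients depending on $u,f,|Du|^2,f\De u,Df\cdot Du$. We first extract the factor $u^2-v^2$ algebraically, and then prove positivity of the quotient $B^+$ region by region.

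\emph{Factorization.} Insert $P^+=(d+2)u^2-dv^2$ and $Q^+=-2uv$, together with $P_u=2(d+2)u$, $P_v=-2dv$, $Q_u=-2v$, $Q_v=-2u$, into the formula of Lemma \ref{lemdiv}, and multiply by $u$.
\begin{itemize}
\item The terms $P^2Q\frac{m-1}{v}=-2u(m-1)P^2$ and $Q^3\frac{m-1}{v}=-8(m-1)u^3v^2$ are polynomial, so the whole expression is a polynomial in $(u,v)$.
\item Every term is even in $v$, since $Q$, $P_v$ and $Q_u$ are odd in $v$ and appear in odd-odd pairs. Hence the expression is a polynomial in $u^2$ and $v^2$.
\item By the second part of Lemma \ref{lemdiv}, it vanishes at $v=u$, hence at $v^2=u^2$. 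It is therefore divisible by $u^2-v^2$.
\end{itemize}
To organize the division we use the overline notation: write each coefficient $C(u,v)$ as $\ov{C}+C(u,u)$. The sum of the $C(u,u)$ parts vanishes, and each $\ov{C}$ visibly contains $u^2-v^2$. This gives $B^+$ explicitly. After scaling by a power of $u$ and setting $t=v^2/u^2\in[0,1]$, $B^+$ is a quadratic polynomial in $t$ whose coefficients are affine in $f|Du|^2$, $fu\De u$, $uDf\cdot Du$ and $f\m$.

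\emph{Positivity on $\ok\cup\oi\cup\ot$.} Here we use Fact \ref{fctfu}: $f=1+O(\tau)$, $|Du|^2=\ai_0^2+O(\tau)$, $u\De u=(m-1)+O(\tau)$ and $uDf\cdot Du=O(\tau)$. Then $B^+=P_0+O(\tau)Q_0$, where $P_0$ is exactly the quadratic polynomial that appears for the Lawson cone. We check $P_0>0$ on $[0,1]$ via Fact \ref{fctl}, i.e.\ $L_2+L_1+L_0$, $L_1+2L_0$, $L_0>0$, for $d=3$ with $n+m\ge9$ and for $d=\frac95$ with $(n,m)=(4,4),(5,3)$. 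This is the computation of \cite{ZLlc}. Fact \ref{fctpoly} then absorbs the $O(\tau)$ perturbation.

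\emph{Positivity on $\oo$, the main obstacle.} Here $f=1$ and $Df=0$, but $u\De u$ and $|Du|^2$ are no longer close to conical values; $|Du|^2$ can range over all of $[0,\ai_0^2)$ approximately. The $\De u$ term enters through $P(Q^2-P^2)$, which is $\le0$ for $u\ge v$ since $P^2-Q^2=(P-Q)(P+Q)\ge0$. After division its contribution to $B^+$ should have a definite sign that makes an \emph{upper} bound on $u\De u$ the relevant one.

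We would first try (\ref{eqls}), namely $u\De u\le (m-1)+\frac{m-1}{n}|Du|^2+O(\de)|Du|^2$. This reduces $B^+$ to a quadratic in $t$ whose coefficients are polynomial in $s=|Du|^2\in[0,\ai_0^2+O(\de)]$, and Fact \ref{fctl} is checked jointly in $(t,s)$. If that is too lossy in some coefficient, as is likely for the critical $(n,m)$, we switch to (\ref{eqfl}). We replace $|Du|^2$ by $\py+O(\de)$, clear the denominator $2n(\ai_0^2-\py)+(m+n-1)\py$ of $R$, and write each of $L_2+L_1+L_0$, $L_1+2L_0$, $L_0$ as a product of two expressions of the form $a(\ai_0^2-\py)+b\py$ with $a,b\ge0$, minus a multiple of $(\ai_0^2-\py)\py$. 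Fact \ref{fctcs} then bounds the product from below, and we verify the resulting numerical inequality $(\sqrt{a'b}+\sqrt{ab'})^2\ge$ (cross coefficient) strictly, leaving room to absorb the $O(\de)$ errors.

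I expect this last verification, finding the right splitting into $a,b,a',b'$ for each of the three Fact \ref{fctl} quantities and each $(n,m,d)$ case, to be where the real difficulty lies.
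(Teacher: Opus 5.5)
Your proposal is correct and follows essentially the paper's proof. It uses the overline factorization of Lemma \ref{lemp+} and the perturbation of the cone polynomial $V^+$ by Fact \ref{fctpoly} on $\ok\cup\oi\cup\ot$. On $\oo$ it applies Fact \ref{fctl} to $L^+$, with (\ref{eqls}) for $L_0^+$ and (\ref{eqfl}) plus the Cauchy--Schwarz splitting of Fact \ref{fctcs} for $L_2^++L_1^++L_0^+$ and $L_1^++2L_0^+$.

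Your only deviation is checking $V^+>0$ via the conditions of Fact \ref{fctl} instead of the paper's endpoint/axis-of-symmetry computation, and it works: for $d=3$ the three quantities are positive multiples of $3m-5$, $60m+12n-202$ and $3(m+n)-26$, and the two $d=\frac95$ cases pass as well. One caution: for $L_0^+$ you need an error of the relative form $O(\de)|Du|^2$, as in (\ref{eqls}). The absolute $O(\de)$ in (\ref{eqfl}) cannot be absorbed there, because the constant term of $L^+$ vanishes at $t^+=0$.
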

Let us first calculate the factorization exactly.
\begin{lem}\label{lemp+}
	For $(a,b)\in\R^2$ define the quadratic forms,
	\begin{align*}
		P_1^+(a,b)=& d^3 b^2+\left(d^3+6 d^2+12 d+8\right) a^2+\left(-2
		d^3-6 d^2-4 d\right) ab,\\
		P_2^+(a,b)=&b^2 \left(d^3 m-d^3\right)\\&+a^2 \left(d^3 m-d^3+4 d^2
		m-6 d^2+4 d m-12 d-8\right)+ab \left(-2 d^3
		m+2 d^3-4 d^2 m+2 d^2-4 d\right),\\
		P_3^+(a,b)=&d^3 b^2+\left(d^3+6 d^2+12 d+8\right) a^2+\left(-2
		d^3-6 d^2\right) ab
	\end{align*}
	We can write
	\begin{align*}
		&uf^{2}\left(P^2|Du|^2+f\m Q^2\right)^{\frac{3}{2}}\di_g\frac{\na^g F}{\no{\na^g F}_g}
		\\=&(u^2-v^2)\left(\frac{m+1}{2}P_1^+(u^2,v^2)(-uDf\cdot Du)+P_2^+(u^2,v^2)f|Du|^2+P_1^+(u^2,v^2)(-uf\De u)+(m-1)P_3^+(u^2,v^2)\right)
	\end{align*}
\end{lem}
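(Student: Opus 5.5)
The identity is a direct polynomial computation. I will substitute $P=P^+=(d+2)u^2-dv^2$ and $Q=Q^+=-2uv$ from Fact \ref{fctpq} into the formula of Lemma \ref{lemdiv}, multiply through by $u$, and group the result by the four independent "coefficient functions" $-uDf\cdot Du$, $f|Du|^2$, $-uf\De u$ and the constant $(m-1)$. Each group is a polynomial in $u,v$. After multiplying by $u$, each becomes a homogeneous polynomial in $u^2,v^2$: a sextic in $(u,v)$, with only even powers of $v$ and, after the extra factor $u$, only even powers of $u$. The factor $\frac{m-1}{v}$ always appears multiplied by $Q=-2uv$, so it cancels cleanly. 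The claim is then that each group is divisible by $u^2-v^2$, with quotients $\frac{m+1}{2}P_1^+$, $P_2^+$, $P_1^+$ and $P_3^+$ respectively.

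The key steps, in order, are as follows.

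First, compute the ingredients: $P_u=2(d+2)u$, $P_v=-2dv$, $Q_u=-2v$, $Q_v=-2u$, $Q^2-P^2=4u^2v^2-((d+2)u^2-dv^2)^2$.

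Second, read off the coefficient groups.
\begin{itemize}
\item The $Df\cdot Du$ group is $\frac{m+1}{2}uP(Q^2-P^2)\,Df\cdot Du$, i.e. $\frac{m+1}{2}P(P^2-Q^2)(-uDf\cdot Du)$.
\item The $\De u$ group similarly gives $P(P^2-Q^2)(-uf\De u)$.
\item The $f|Du|^2$ group is $u\bigl(Q^2P_u-PQ(P_v+Q_u)+P^2Q_v\bigr)+(m-1)\bigl(P^3-2uP^2\bigr)$, using $u\cdot P^2Q\frac{m-1}{v}=-2u^2P^2(m-1)$.
\item The constant group is $(m-1)(P^3+uQ^3/v)=(m-1)(P^3-8u^4v^2)$.
\end{itemize}

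Third, verify divisibility by $u^2-v^2$. This holds a priori, group by group, by the last assertion of Lemma \ref{lemdiv}: the expression vanishes at $v=u$ for all values of $Df\cdot Du$, $\De u$, $|Du|^2$. Since these enter linearly as independent parameters, each coefficient vanishes at $v=u$. It can also be checked directly, e.g. $P(u,u)=2u^2$ and $Q(u,u)=-2u^2$, so $P^2-Q^2=0$ there. Since everything is even in $v$, vanishing at $v=u$ gives divisibility by $u^2-v^2$.

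Fourth, compute the quotients explicitly. Writing $a=u^2$, $b=v^2$, we have $P^2-Q^2=((d+2)a-db)^2-4ab$. Dividing $P(P^2-Q^2)$ by $a-b$ requires a short polynomial division, which should reproduce $P_1^+(a,b)$. This is plausible: the leading coefficient $(d+2)^3$ matches the $a^2$ coefficient $d^3+6d^2+12d+8$, and the $b^2$ coefficient $d^3$ also matches.

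The constant group $P^3-8a^2b$, divided by $a-b$, should give $P_3^+$; note that $P_3^+$ differs from $P_1^+$ only in the $ab$ coefficient, which is consistent. The $f|Du|^2$ group is the longest. It involves $m$ linearly, and dividing it by $a-b$ should give $P_2^+$.

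The main obstacle is purely bookkeeping in the $f|Du|^2$ group: expanding $Q^2P_u-PQ(P_v+Q_u)+P^2Q_v$ and combining it with the $(m-1)$ terms without sign errors. I would do that division carefully, or check it by comparing coefficients of $a^3$, $a^2b$, $ab^2$, $b^3$ on both sides (four linear identities in $d,m$). Finally, I would note that the statement's overall sign conventions, with $-uDf\cdot Du$ and $-uf\De u$, absorb the sign of $P^2-Q^2$ versus $Q^2-P^2$.
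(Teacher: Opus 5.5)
Your proposal is correct and is essentially the paper's own computation. You substitute $P^+,Q^+$ into Lemma \ref{lemdiv}, group by $-uDf\cdot Du$, $f|Du|^2$, $-uf\De u$ and $m-1$, and divide each group by $u^2-v^2$; the paper subtracts the value at $v=u$ and delegates the quotients to Mathematica, while you note, correctly, that each group vanishes at $v=u$ on its own. Two small fixes: the $f|Du|^2$ group contains $(m-1)(P^3-2u^2P^2)$ rather than $-2uP^2$, and this equals $(m-1)d(u^2-v^2)P^2$, which makes the $P_2^+$ division short; and the ``independent parameters'' justification does not follow from Lemma \ref{lemdiv}, which only asserts vanishing at the actual values of $Df\cdot Du$, $f|Du|^2$, $f\De u$, so rely on the direct check at $v=u$ (or on the explicit division itself), which does succeed for all four groups.
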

	Set
\begin{align*}
	B^+=\frac{m+1}{2}P_1^+(u^2,v^2)(-uDf\cdot Du)+P_2^+(u^2,v^2)f|Du|^2+P_1^+(u^2,v^2)(-uf\De u)+(m-1)P_3^+(u^2,v^2).
\end{align*}
\begin{proof}
	By Lemma \ref{lemdiv}, we have
	\begin{align*}
		&[uf^{2}\left(P^2|Du|^2+f\m Q^2\right)^{\frac{3}{2}}\di_g\frac{\na^g F}{\no{\na^g F}_g}](u,v,Du)\\
		&[uf^{2}\left(P^2|Du|^2+f\m Q^2\right)^{\frac{3}{2}}\di_g\frac{\na^g F}{\no{\na^g F}_g}](u,v,Du)-[uf^{2}\left(P^2|Du|^2+f\m Q^2\right)^{\frac{3}{2}}\di_g\frac{\na^g F}{\no{\na^g F}_g}](u,u,Du)\\=&\frac{m+1}{2}\ov{uP(Q^2-P^2)}Df\cdot Du+\ov{u\left(Q^2P_u-PQ(P_v+Q_u)+P^2Q_v+P^3\frac{m-1}{u}+P^2Q\frac{m-1}{{v}}\right)}f|Du|^2\\
		&+\ov{uP\left(Q^2-P^2\right)}f\De u+\ov{u\left(P^3\frac{m-1}{u}+Q^3\frac{m-1}{{v}}\right)}.
	\end{align*}
	Recall Fact \ref{fctpq}. We have $P(u,v)=(d+2)u^2-dv^2$ and $Q(u,v)=-2vu$. This gives
	\begin{align*}
		P_u=&2(d+2)u,P_v=-2dv,Q_u=-2v,Q_v=-2u,\\
		\ov{uP(Q^2-P^2)}=&-u (u^2-v^2) \left(\left(d^3+6 d^2+12 d+8\right) u^4+\left(-2
		d^3-6 d^2-4 d\right) u^2 v^2+d^3 v^4\right),\\
		\ov{u\left(P^3\frac{m-1}{u}+Q^3\frac{m-1}{{v}}\right)}=&(m-1)(u^2-v^2)\left(d^3 v^4+\left(d^3+6 d^2+12 d+8\right) u^4+\left(-2
		d^3-6 d^2\right) u^2 v^2\right)
	\end{align*}
	and
	\begin{align*}
		&\ov{u\left(Q^2P_u-PQ(P_v+Q_u)+P^2Q_v+P^3\frac{m-1}{u}+P^2Q\frac{m-1}{{v}}\right)}\\
		=&\left(u^2-v^2\right) \bigg(v^4 \left(d^3 m-d^3\right)\\&+u^4 \left(d^3 m-d^3+4 d^2
		m-6 d^2+4 d m-12 d-8\right)+u^2 v^2 \left(-2 d^3
		m+2 d^3-4 d^2 m+2 d^2-4 d\right)\bigg),
	\end{align*}
	Mathematica verification will be provided at the end.
\end{proof}
\subsection{Positivity of $B^+$ on $\Om_K\cup\oi\cup \ot$}
Recall Fact \ref{fctfu}. We have
\begin{align*}
	&B^+\\
	=&P_1^+(u^2,v^2)O(\tau)+P_2^+(u^2,v^2)\ai_0^2(1+O(\tau))-(m-1)(1+O(\tau))P_1^+(u^2,v^2)+(m-1)P_3^+(u^2,v^2)\\
	=&u^4\left(-(m-1)P_1^+(1,t^+)+P_2^+(1,t^+)\ai_0^2+(m-1)P_3^+(1,t^+)+O(\tau)P_1^+(1,t^+)+O(\tau)P_2^+(1,t^+)\right),
	\end{align*}
	with $t^+=\frac{v^2}{u^2}\in[0,1].$
	
To prove Lemma \ref{lemb+}, by Fact  \ref{fctpoly}, it suffices to verify that 
\begin{align*}
V^+(t^+)=-(m-1)P_1^+(1,t^+)+P_2^+(1,t^+)\ai_0^2+(m-1)P_3^+(1,t^+)
\end{align*} has positive infimum for $t^+\in[0,1].$ 

For $(n,m)=(5,3),d=\frac{9}{5},$ direct calculation,
\begin{align*}
V^+(t^+)=	-(m-1)P_1^+(1,t^+)+P_2^+(1,t^+)\ai_0^2+(m-1)P_3^+(1,t^+)\ge\frac{8}{25},\forall t^+\in[0,1].
\end{align*}
For $(n,m)=(4,4),d=\frac{9}{5},$ direct calculation,
\begin{align*}
V^+(t^+)=	-(m-1)P_1^+(1,t^+)+P_2^+(1,t^+)\ai_0^2+(m-1)P_3^+(1,t^+)\ge\frac{16}{25},\forall t^+\in[0,1].
\end{align*}

For $n+m\ge 9,n\ge 3,m\ge 2,d=3$, we have
\begin{align*}
	V^+(0)=&\frac{25 (m-1) (3 m-5)}{n-1}>0,\\
	V^+(1)=&\frac{4 (m-1) (3 m+3 n-26)}{n-1}>0.
\end{align*}
As $V^+(t)$ is a quadratic polynomial, its minimum on $[0,1]$ is either achieved at endpoints or the axis of symmetry. The axis of symmetry of $V^+(t)$ is
\begin{align*}
	t_0=\frac{-15 m+2 n+8}{9-9 m}.
\end{align*}
If $t_0\in [0,1]$, then we have 
\begin{align*}
0\ge	-15 m+2 n+8\ge 9-9 m.
\end{align*}
On the other hand $n\ge 9-m,$ so we have
\begin{align*}
	-15 m+2 n+8\ge 26 - 17 m.
\end{align*}
Multiplying gives
\begin{align*}
	(	-15 m+2 n+8)^2\le (9-9m)(26-17m)
\end{align*}
We have
\begin{align*}
	V^+(t_0)=&\frac{60 m (n-6)-4 n (n+8)+311}{3 (n-1)}\\
	=&\frac{60 m (n-6)-4 n (n+8)+311+(-15 m+2 n+8)^2-(-15 m+2 n+8)^2}{3 (n-1)}\\
	\ge&\frac{60 m (n-6)-4 n (n+8)+311+(-15 m+2 n+8)^2-(9-9m)(26-17m)}{3 (n-1)}\\
	=&\frac{3 (m-1) (24 m-47)}{3(n-1)}\\
	>&0.
\end{align*}We are done.
\subsection{Positiveity of factors on $\oo$}
On $\oo,$ we have $f=1.$ Again, setting $t^+=\frac{v^2}{u^2}$, we have $
t^+\in[0,1],$ and
\begin{align*}
B^+=&P_2^+(u^2,v^2)|Du|^2+P_1^+(u^2,v^2)(-u\De u)+(m-1)P_3^+(u^2,v^2)\\=&u^4\left(P_2^+(1,t^+)|Du|^2+P_1^+(1,t^+)(-u\De u)+(m-1)P_3^+(1,t^+)\right).
\end{align*}
Note that $P_1(t)$ depends only on $d$ but not on $n,m$. 
\begin{align*}
	P_1^+(1,t^+)\ge\begin{cases}
		32&\textnormal{ for }d=3,\\
		\frac{112}{5}&\textnormal{ for }d=\frac{9}{5}.
	\end{cases}
\end{align*}
Mathematica verifications will be provided at the end. 

We have by Lemma \ref{lemudum}
\begin{align}
&P_2^+(1,t^+)|Du|^2+P_1^+(1,t^+)(-u\De u)+(m-1)P_3^+(1,t^+)\\=&P_2^+(1,t^+)|Du|^2-\left(u\De u-(m-1)\right)P_1^+(1,t^+)+(m-1)(P_3^+(1,t^+)-P_1^+(1,t^+)).
\end{align}
Set
\begin{align*}
	&L^+(t^+)=P_2^+(1,t^+)|Du|^2-\left(u\De u-(m-1)\right)P_1^+(1,t^+)+(m-1)(P_3^+(1,t^+)-P_1^+(1,t^+))
\end{align*}
Let $L^+_2,L^+_1,L^+_0$ be the coefficients of $(t^+)^2,(t^+)^1,(t^+)^0,$ respectively. Recall Fact \ref{fctl}. To prove Lemma \ref{lemb+}, it suffices to show that 
\begin{lem}\label{lemf+}
	\begin{enumerate}
		\item	$L^+_2+L^+_1+L^+_0\ge 0$, 
		\item  $L^+_0\ge 0$,
		\item $L^+_1+2L^+_0\ge 0.$
	\end{enumerate}
\end{lem}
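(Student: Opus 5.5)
We will prove the three inequalities by computing the coefficients of $L^+$ explicitly, with $d=3$ or $d=\frac{9}{5}$. Write $s=|Du|^2$ and $w=u\De u-(m-1)$. Expanding $P_1^+(1,t),P_2^+(1,t),P_3^+(1,t)$ in $t$ expresses each $L^+_k$ in the form $\text{(coeff of }P_2^+)\,s-\text{(coeff of }P_1^+)\,w+(m-1)\,\text{(coeff of }P_3^+-P_1^+)$. Since $P_3^+-P_1^+=4d\,ab$, the last term only appears in $L_1^+$, where it equals $4d(m-1)$. Hence
\begin{align*}
L^+_0=&p_0 s-(d+2)^3 w,\\
L^+_1+2L^+_0=&(p_1+2p_0)s-\left(2(d+2)^3-2d^3-6d^2-4d\right)w+4d(m-1),\\
L^+_2+L^+_1+L^+_0=&(p_0+p_1+p_2)s-8w+4d(m-1).
\end{align*}
Here $p_k$ are the coefficients of $P_2^+(1,t)$; for the last line we used $P_1^+(1,1)=8$.

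The only input is an upper bound on $w$ in terms of $s$, from Lemma \ref{lemudum}. Every coefficient of $w$ above is negative: $-(d+2)^3$, $-(6d^2+20d+16)$, and $-8$. So replacing $w$ by its upper bound gives valid lower bounds.

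For (2), we use (\ref{eqls}), $w\le \frac{m-1}{n}s+O(\de)s$. This reduces (2) to checking $p_0\ge (d+2)^3\frac{m-1}{n}$, up to $O(\de)s$. We have $p_0=(m-1)(d^3+4d^2+4d)-2d^2-8d-8$. The check becomes a numerical inequality in $(n,m,d)$, verifiable for $n\ge 3$ and for the cases in Theorem \ref{thmm}. The slack is what absorbs $O(\de)s$.

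If this crude bound is not enough in (1) or (3), we switch to the sharper (\ref{eqfl}). Write $\sigma=\phi'(|x|/\lam_y)^2\in[0,\ai_0^2)$; by (\ref{eqdu2}), $s=\sigma+O(\de)$. After multiplying by the positive denominator $2n(\ai_0^2-\sigma)+(m+n-1)\sigma$, each inequality becomes a polynomial in $\sigma$ whose terms are of the form $A(\ai_0^2-\sigma)+B\sigma$ times similar factors. Fact \ref{fctcs} (Cauchy--Schwarz) then handles any cross term $-c(\ai_0^2-\sigma)\sigma$ against products $(a(\ai_0^2-\sigma)+b\sigma)(a'(\ai_0^2-\sigma)+b'\sigma)$. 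This leaves a check of positivity of explicit coefficients, plus strict slack to absorb the $O(\de)$ errors, since $\de$ may be taken arbitrarily small.

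For (1), the constant term $4d(m-1)>0$ dominates when $s$ is small. For $s$ near $\ai_0^2$, $w$ is at most about $0$ because $R(\ai_0)=0$, and we need $(p_0+p_1+p_2)\ai_0^2+4d(m-1)>0$; this is a simple check. We then interpolate linearly in $\sigma$ after clearing the denominator. Item (3) is similar.

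\textbf{Main obstacle.} We expect (3) to be the hardest step, because its coefficient of $w$ is the largest. The product $(6d^2+20d+16)R(\sigma)\sigma$ must be beaten by $(p_1+2p_0)\sigma+4d(m-1)$ uniformly in $\sigma$ and in $n,m$ with $n+m\ge 9$. Here the Cauchy--Schwarz step of Fact \ref{fctcs} is really needed. Our first attempt is to choose $a,b,a',b'$ so that the cleared numerator is a nonnegative combination of $(\ai_0^2-\sigma)^2$, $(\ai_0^2-\sigma)\sigma$ and $\sigma^2$. The infinite family will then be handled by the same kind of estimate in $m+n$ as in the treatment of $V^+(t_0)$ above.
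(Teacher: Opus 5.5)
\textbf{Verdict.} Your route is the paper's, but an arithmetic slip makes your plan for item (1) verify the wrong inequality. The argument closes once (1) also gets the Cauchy--Schwarz step.

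\textbf{What matches.} Item (2) comes from the crude bound (\ref{eqls}). Your reduction to $p_0>(d+2)^3\frac{m-1}{n}$ is exactly the paper's $3(m-2)(n-2)+(n+m-9)+2\ge 2$ for $d=3$, and its numbers $7.942$, $1.1552$ for $d=\frac{9}{5}$. Items (1) and (3) come from (\ref{eqfl}) plus Fact \ref{fctcs}, keeping some slack $c(m-1)$ to absorb the $O(\de)$ terms. Your formulas for $L_0^+$ and $L_1^++2L_0^+$ are correct.

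\textbf{The slip.} $P_1^+(1,1)=(d+2)^3-d^3-6d^2-4d=8(d+1)$, not $8$. So the coefficient of $w$ in $L_2^++L_1^++L_0^+$ is $-8(d+1)$: that is $-32$ for $d=3$ and $-22.4$ for $d=\frac{9}{5}$, matching the paper's $-8R$ after dividing by $4$, resp.\ $-22.4R$. With your value you would be proving a strictly weaker inequality than (1).

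\textbf{Why it matters for (1).} Your plan for (1) is positivity at $\sigma=0$ and $\sigma=\ai_0^2$, then ``linear interpolation after clearing the denominator.'' That fails for the correct inequality. Put $X=\ai_0^2-\sigma$, $Y=\sigma$ and $3(m-1)=3(n-1)(X+Y)$. For $d=3$ the cleared numerator of $\frac14(L_2^++L_1^++L_0^+)$ is a quadratic form in $(X,Y)$, not a linear function of $\sigma$. At $(n,m)=(3,6)$ it equals $(6X+Y)(6X+8Y)-80XY=36X^2-26XY+8Y^2$. Both endpoint coefficients are positive, but the cross term is negative; this happens for $n=3$, $6\le m\le 9$. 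Positivity then holds only because $26^2<4\cdot 36\cdot 8$, which is precisely the Cauchy--Schwarz step of Fact \ref{fctcs}.

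\textbf{Where the difficulty actually lies.} Item (1) is the tightest of the three for $d=3$. The $\sigma$-coefficient $3(m+n-1)-23$ is barely positive at $m+n=9$, so the paper keeps only $0.01(m-1)$ of slack (writing $2.99$ for $3$). It handles the infinite family by noting that the Cauchy--Schwarz constant minus $16(m-1)$ decreases in $m$ for fixed $N=m+n$. So $n=3$ is the worst case, and at $N=9$ the constant is about $85.9$ against $80$.

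By contrast, your ``main obstacle'' is misplaced. For (3) with $d=3$ the cleared form has all coefficients nonnegative; for instance the cross coefficient is $62m-344>0$ when $n=3$. Cauchy--Schwarz is genuinely needed in (3) only for $d=\frac{9}{5}$, $(n,m)=(4,4)$, and there the margin is large.

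\textbf{To close the argument.} Fix the coefficient to $-8(d+1)$ and apply Fact \ref{fctcs} to (1) as well. Then carry out the explicit numerical checks, including both $d=\frac{9}{5}$ cases, as the paper does.
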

The rest of this section will be devoted to the proof of Lemma \ref{lemf+}. We will split into subcases of $d=3,d=\frac{9}{5}$ and the three bullets of Lemma \ref{lemf+}.
\subsection{$d=3$}
We will verify the three conditions one by one.
\subsubsection{$d=3$, $L^+_2+L^+_1+L^+_0> 0$}\label{secl012}
By Lemma \ref{lemudum}, we have
\begin{align}\label{eql012}
\frac{1}{4}\left(	L_2^++L_1^++L_0^+\right)\ge&  -3 + 3 m+(-23 + 3 m - 8 R(\pyy))\py+O(\de)) \\
\ge&2.99(n-1)(\ai_0^2-\phi'(|x|/\lam_y)^2)+(2.99(m+n-1)-23)\py\\&-\frac{16(m-1)(\ai_0^2-\py)\py}{2n(\ai_0^2-\py)+(m+n-1)\py}\\&+0.01(m-1)+O(\de).
\end{align}
Using Fact \ref{fctcs}, we deduce that
\begin{align}
&\left(2.99(n-1)(\ai_0^2-\phi'(|x|/\lam_y)^2)+(2.99(m+n-1)-23)\py\right)\\&\cdot\left(2n(\ai_0^2-\py)+(m+n-1)\py\right)\\
\ge&\left(\sqrt{2.99(n-1)(m+n-1)}+\sqrt{2n\left(2.99(m+n-1)-23\right)}\right)^2(\ai_0^2-\py)\py.
\end{align} Set $N=(m+n)$. For any fixed $N$,
\begin{align}
	&\left(\sqrt{2.99(n-1)(m+n-1)}+\sqrt{2n\left(2.99(m+n-1)-23\right)}\right)^2- 16(m-1)\\
	=&\left(\sqrt{2.99(N-m-1)(N-1)}+\sqrt{2(N-m)\left(2.99(N-1)-23\right)}\right)^2- 16(m-1),
\end{align}
is a decreasing function in $m,$ thus achieving its minimum at $m=N-3$, i.e.,
\begin{align}
		&\left(\sqrt{2.99(n-1)(m+n-1)}+\sqrt{2n\left(2.99(m+n-1)-23\right)}\right)^2- 16(m-1)\\
		\ge&\left(\sqrt{5.98(N-1)}+\sqrt{6\left(2.99(N-1)-23\right)}\right)^2- 16(N-4)\\=&5.98N-5.98+17.94N-155.94-16N+64\\&+2\sqrt{5.98(N-1)6\left(2.99(N-1)-23\right)}\\=&7.92N-97.92+2\sqrt{5.98(N-1)6\left(2.99(N-1)-23\right)}\\
		\ge&7.92\cdot 9-97.92+2\sqrt{5.98(9-1)6\left(2.99(9-1)-23\right)}\\
		>&5.8\\
		>&0.\label{eql012f}
\end{align}
Combining through (\ref{eql012}) to (\ref{eql012f}), we deduce that
\begin{align*}
\frac{1}{4}\left(	L_2^++L_1^++L_0^+\right)>0.01(m-1)+O(\de)>0,
\end{align*}provided $\de$ small.
\subsubsection{$d=3$,$L_0^+\ge 0$}
By Lemma \ref{lemudum}, we have\begin{align*}
&\frac{n}{25}L_0^+\ge	(-5 n+ 3 mn - 5(m-1)+O(\de)) |Du^2|.
\end{align*}
We have
\begin{align*}
	&-5 n+ 3 mn - 5(m-1)\\
	=&3(m-2)(n-2)+(n+m-9)+2\\
	\ge&2.
\end{align*}and thus
\begin{align*}
	\frac{n}{25}L_0^+\ge (2+O(\de))|Du^2|\ge 0,
\end{align*}provided $\de$ small.
\subsubsection{$d=3$, $L_1^++2L_0^+\ge 0$}
By Lemma \ref{lemudum}, we have
\begin{align*}
\frac{1}{2}(L_1^++2L_0^+)\ge&-6 + 6 m + (-65 + 30 (m-1) - 65 R(\pyy))\py+O(\de)\\=&5.9(n-1)(\ai_0^2-\py)+\left(5.9(n-1)-65+30(m-1)\right)\py\\&-  \frac{130(m-1)(\ai_0^2-\py)\py}{2n(\ai_0^2-\py)+(m+n-1)\py}\\&+0.1(m-1)+O(\de).
	\end{align*}
Using $m+n-1\ge 8,2n\ge 6$, we have
\begin{align*}
&\left(	\sqrt{5.9(n-1)(m+n-1)}+\sqrt{2n\left(5.9(n-1)-65+30(m-1)\right)}\right)^2\\
\ge&6\cdot 5.9(n-1)+6\cdot 5.9(n-1)+180(m-1)-6\cdot 65\\
=&130(m-1)+50(m+n-2)-6\cdot 65+20.8(n-1)\\
\ge&130(m-1)+350-390+41.6\\
>&130(m-1).\end{align*}
Thus, by Fact \ref{fctcs}, we have
\begin{align*}
	\frac{1}{2}(L_1^++2L_0^+)\ge 0.1(m-1)+O(\de)>0,
\end{align*}provided $\de$ is small.
\subsection{$d=\frac{9}{5}=1.8$}
Now let us deal with the case of $d=\frac{9}{5}$ and $(n,m)=(4,4)$ or $(5,3)$.
\subsubsection{$d=1.8,L_2^++L_1^++L_0^+>0$}\label{secl01218}
We have
\begin{align}
\label{eql01218}L_2^++L_1^++L_0^+\ge&	-7.2 + 7.2 m + (-49.76 + 7.2 m - 22.4 R(\pyy))\py+O(\de)\\
\ge&7.1(n-1)(\ai_0^2-\py)+\left(7.1(m+n-8)+0.1m-0.06\right)\py\\&-\frac{44.8(m-1)(\ai_0^2-\py)\py}{2n(\ai_0^2-\py)+(m+n-1)\py}\\&+0.1(m-1)+O(\de).
\end{align}
Using $n+m=8,$ we have
\begin{align}
&\left(	\sqrt{7.1(n-1)(m+n-1)}+\sqrt{2n\left(0.1m-0.06\right)}
\right)^2\\
\ge&\label{eq621}\begin{cases}
192&\textnormal{ if }n=4,\\
	239&\textnormal{ if }n=5,
	\end{cases}\\>&44.8(m-1)\end{align}
Thus, we have
\begin{align*}
	L_2^++L_1^++L_0^+>0.1(m-1)+O(\de)>0,
\end{align*}provided $\de$ small.
\subsubsection{$d=1.8,$ $L_0^+\ge 0,$}
We have
\begin{align*}
L_0^+\ge&(-54.872 + 25.992 m - 54.872 \frac{m-1}{n}+O(\de))|Du^2|.
\end{align*}
When $(n,m)=(4,4),$ we have 
\begin{align*}
	L_0^+\ge&(7.942+O(\de))|Du^2|\ge 0,
\end{align*}provided $\de$ small.

When $(n,m)=(5,3),$ we have 
\begin{align*}
	L_0^+\ge&(1.1552+O(\de))|Du^2|\ge 0,
\end{align*}provided $\de$ small.
\subsection{$d=1.8$, $L_1^++2L_0^+>0,$}
We have
\begin{align*}
L_1^++2L_0^+\ge&	-7.2 + 7.2 m + (-98.8 + 27.36 m - 71.44 R(\pyy))\py +O(\de)\\
>&7.1(n-1)(\ai_0^2-\py)+(-98.8 + 27.36 m +7.1(n-1))\py\\&-\frac{142.88(m-1)(\ai_0^2-\py)\py}{2n(\ai_0^2-\py)+(m+n-1)\py}\\&+0.1(m-1)+O(\de)
 \end{align*}
We have
\begin{align*}
&	\left(\sqrt{7.1(n-1)(m+n-1)}+\sqrt{2n(-98.8 + 27.36 m +7.1(n-1))}\right)^2\\
	\ge&\begin{cases}
	794&\textnormal{ if }n=4\\
	620&\textnormal{ if }n=5.
	\end{cases}\\>& 142.88(m-1).
\end{align*}
Thus, we have
\begin{align*}
	L_1^++2L_0^+>0.1(m-1)+O(\de)>0,
\end{align*}provided $\de $ small.
\section{Factorization in $u\le v$}\label{secfact-}
In this section we deal with the region $u\le v,$ i.e., $E+F^-.$
\subsection{Factorization lemma of $F^-$}
We will prove that
\begin{lem}\label{lemb-}
	We have 
	\begin{align*}
		vf^{2}\left(P^2|Du|^2+f\m Q^2\right)^{\frac{3}{2}}\di_g\frac{\na^g F}{\no{\na^g F}_g}=(u^2-v^2) B^-,
	\end{align*}with $B^-\ge 0$. 
\end{lem}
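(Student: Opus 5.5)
The plan follows the proof of Lemma \ref{lemb+} step by step, now using the branch $P^-=2uv$, $Q^-=du^2-(d+2)v^2$ from Fact \ref{fctpq}.

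\textbf{Step 1: exact factorization.} Multiply the identity of Lemma \ref{lemdiv} by $v$ and subtract its value at $(u,u,Du)$, which vanishes by the second part of Lemma \ref{lemdiv}. I then compute the four barred quantities
\begin{align*}
\ov{vP(Q^2-P^2)},\quad \ov{v\left(Q^2P_u-PQ(P_v+Q_u)+P^2Q_v+P^3\tfrac{m-1}{u}+P^2Q\tfrac{m-1}{v}\right)},\quad \ov{v\left(P^3\tfrac{m-1}{u}+Q^3\tfrac{m-1}{v}\right)}.
\end{align*}
These use $P_u=2v$, $P_v=2u$, $Q_u=2du$, $Q_v=-2(d+2)v$. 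Every term carrying $\frac{1}{u}$ comes with a factor $P^3=8u^3v^3$, so everything is polynomial.

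Each barred expression vanishes at $v=u$ and depends on $v$ only through even powers once the natural factors of $u$ are extracted. For example, $vP(Q^2-P^2)=2uv^2(Q^2-P^2)$ pairs naturally with $uDf\cdot Du$ and $uf\De u$. I therefore expect to factor out $(u^2-v^2)$ and obtain
\begin{align*}
B^-=\tfrac{m+1}{2}P_1^-(u^2,v^2)(-uDf\cdot Du)+P_2^-(u^2,v^2)f|Du|^2+P_1^-(u^2,v^2)(-uf\De u)+(m-1)P_3^-(u^2,v^2),
\end{align*}
with explicit quadratic forms $P_1^-,P_2^-,P_3^-$ depending on $d$ and $m$. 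I will verify these by symbolic computation, as in Lemma \ref{lemp+}.

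\textbf{Step 2: sign bookkeeping and the almost-conical region.} On $u\le v$ we have $u^2-v^2\le0$, and the required sign of the divergence is $\le0$. Hence $B^-\ge0$ is exactly the needed conclusion. Since $v\ge u$, I normalize by $v^4$ and set $t^-=u^2/v^2\in[0,1]$.

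On $\Om_K\cup\oi\cup\ot$, Fact \ref{fctfu} gives $uDf\cdot Du=O(\tau)$, $f=1+O(\tau)$, $|Du|^2=\ai_0^2+O(\tau)$ and $u\De u=(m-1)+O(\tau)$. So $B^-/v^4=V^-(t^-)+O(\tau)(\ldots)$, where
\begin{align*}
V^-(t)=-(m-1)P_1^-(t,1)+\ai_0^2P_2^-(t,1)+(m-1)P_3^-(t,1).
\end{align*}
By Fact \ref{fctpoly}, it suffices to show $V^-$ has positive infimum on $[0,1]$. For $d=\frac95$ with $(n,m)=(4,4),(5,3)$ this is a direct numeric check. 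For $d=3$ with $n+m\ge9$ I will check the two endpoints and the vertex $t_0$, mimicking the $V^+$ argument and using $n\ge 9-m$ to bound $(\text{numerator of }t_0)^2$ when $t_0\in[0,1]$.

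\textbf{Step 3: the region $\oo$.} Here $f=1$, and I write
\begin{align*}
L^-(t)=P_2^-(t,1)|Du|^2-\left(u\De u-(m-1)\right)P_1^-(t,1)+(m-1)\left(P_3^-(t,1)-P_1^-(t,1)\right).
\end{align*}
I first verify that $P_1^-(t,1)\ge0$ on $[0,1]$ for $d=3$ and $d=\frac95$. This is what allows the one-sided bound (\ref{eqfl}) of Lemma \ref{lemudum} to be inserted for $u\De u-(m-1)$. By Fact \ref{fctl}, I then need the three conditions $L_2^-+L_1^-+L_0^-\ge0$, $L_1^-+2L_0^-\ge0$ and $L_0^-\ge0$.

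For each condition I substitute $|Du|^2=\phi'(|x|/\lam_y)^2+O(\de)$. I then split $(m-1)=(n-1)\ai_0^2$, writing it as $(n-1)(\ai_0^2-\py)+(n-1)\py$ with a small slack left over. The negative term
\begin{align*}
c\,\frac{(m-1)(\ai_0^2-\py)\py}{2n(\ai_0^2-\py)+(m+n-1)\py}
\end{align*}
is then dominated by Fact \ref{fctcs}. This reduces each condition to an explicit inequality of the form $(\sqrt{a'b}+\sqrt{ab'})^2>c(m-1)$ in $n,m$. For $d=3$ I will prove it by monotonicity in $m$ at fixed $N=n+m$, and for $d=\frac95$ I will check the two cases numerically. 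The condition $L_0^-\ge0$ (at $t=0$, i.e.\ $u\ll v$) may only need the cruder bound (\ref{eqls}).

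\textbf{Main obstacle.} I expect the hardest part to be Step 3. On the $F^-$ branch the coefficient $P_1^-$ could be small or change sign near $t=0$, and the Cauchy--Schwarz margins for $d=\frac95$, $(n,m)=(5,3)$ may be thin. If a margin fails, I would first try redistributing the slack between the $(\ai_0^2-\py)$ and $\py$ parts before applying Fact \ref{fctcs}.
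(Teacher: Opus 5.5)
Your plan is essentially the paper's proof. It has the same exact factorization obtained from Lemma \ref{lemdiv}; your $P_1^-$ is the negative of the paper's, which is harmless and is why you get $P_1^-\ge0$ where the paper notes $P_1^-<0$. It also has the same endpoint/vertex analysis of $V^-$ via Fact \ref{fctpoly} on $\Om_K\cup\oi\cup\ot$, and the same Fact \ref{fctl} plus (\ref{eqfl}) plus Fact \ref{fctcs} argument on $\oo$.

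One adjustment: drop the hope that the crude bound (\ref{eqls}) suffices for $L_0^-$. Since $\frac{1}{(d+2)^2}L_0^-=d(m-1)-2|Du|^2-2\bigl(u\De u-(m-1)\bigr)$, the crude bound only gives $(m-1)\bigl(d-\frac{2(n+m-1)}{n(n-1)}\bigr)-O(\de)$ when $|Du|^2$ is near $\ai_0^2$. For $d=3$, $n=3$, $m\ge7$ this is not positive, so the paper uses the $R$-bound with Cauchy--Schwarz for $L_0^-$ as well.
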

Let us first calculate the factorization exactly.
\begin{lem}\label{lemp-}
	For $(a,b)\in\R^2$ define the quadratic forms,
	\begin{align*}
		P_1^-(a,b)=& 2b\left(d^2 a + (-4 - 4 d - d^2) b\right),\\
		P_2^-(a,b)=&(-2b)\left(a \left(d^2-2 d m+4 d\right)+\left(d^2+4 d+4\right) b\right),\\
		P_3^-(a,b)=&\left(d^3 a^2+\left(-2 d^3-6 d^2\right) a b+\left(d^3+6 d^2+12 d+8\right) b^2\right)
	\end{align*}
	We can write
	\begin{align*}
		&vf^{2}\left(P^2|Du|^2+f\m Q^2\right)^{\frac{3}{2}}\di_g\frac{\na^g F}{\no{\na^g F}_g}
		\\=&(u^2-v^2)\left(\frac{m+1}{2}P_1^-(u^2,v^2)(uDf\cdot Du)+P_2^-(u^2,v^2)f|Du|^2+P_1^-(u^2,v^2)(uf\De u)+(m-1)P_3^-(u^2,v^2)\right)
	\end{align*}
\end{lem}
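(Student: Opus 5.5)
The plan mirrors the proof of Lemma \ref{lemp+}: it is an algebraic identity in $(u,v)$ with $Df\cdot Du$, $f|Du|^2$, $f\De u$ treated as independent coefficients. Start from Lemma \ref{lemdiv}, which gives $f^{2}(P^2|Du|^2+f\m Q^2)^{3/2}\di_g\frac{\na^g F}{\no{\na^g F}_g}$ as
\begin{align*}
\frac{m+1}{2}A_1\,Df\cdot Du+A_2\,f|Du|^2+A_1\,f\De u+A_3,
\end{align*}
where $A_1=P(Q^2-P^2)$, $A_2=Q^2P_u-PQ(P_v+Q_u)+P^2Q_v+P^3\frac{m-1}{u}+P^2Q\frac{m-1}{v}$ and $A_3=P^3\frac{m-1}{u}+Q^3\frac{m-1}{v}$. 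Lemma \ref{lemdiv} also says the whole expression vanishes at $v=u$ for arbitrary $Du$. I would not rely only on that abstract vanishing; instead I will check directly that each coefficient $vA_1,vA_2,vA_3$ vanishes at $v=u$, so that each is divisible by $u^2-v^2$.

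Next, substitute $P^-=2uv$ and $Q^-=du^2-(d+2)v^2$ from Fact \ref{fctpq}. This gives $P_u=2v$, $P_v=2u$, $Q_u=2du$, $Q_v=-2(d+2)v$. Multiplying by $v$ turns $vA_1,vA_2,vA_3$ into polynomials in $u,v$: the $\frac{m-1}{v}$ terms become polynomial, and the $\frac{m-1}{u}$ terms carry the factor $P^3$ or $P^2$, which contains $u$.

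\begin{itemize}
\item For $vA_1=2uv^2(Q^2-P^2)$: at $v=u$ we have $Q=-2u^2$ and $P=2u^2$, so $Q^2-P^2=0$. Hence $vA_1=u\cdot(u^2-v^2)\cdot 2v^2(\ldots)$, and matching coefficients should give $P_1^-(u^2,v^2)=2v^2(d^2u^2-(d+2)^2v^2)$. Note that $Q^2-P^2=(Q-P)(Q+P)$ and $Q+P=d u^2+2uv-(d+2)v^2=(u-v)(du+(d+2)v)$, which makes the factor of $u-v$ visible.
\item For $vA_2$ and $vA_3$: $vA_3=8(m-1)u^2v^4+(m-1)Q^3$ is even in $u,v$ and vanishes at $u^2=v^2$. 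Hence it is divisible by $u^2-v^2$, and the quotient should be $(m-1)P_3^-(u^2,v^2)$. The same argument applies to $vA_2$, giving $P_2^-$.
\end{itemize}

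In each case I divide out $u^2-v^2$ explicitly and compare with the stated quadratic forms, writing everything in $a=u^2$, $b=v^2$. The sign convention differs from the $F^+$ case: here $uDf\cdot Du$ and $uf\De u$ appear with a plus sign. That is consistent provided the quotient of $vA_1$ equals $+u P_1^-$, and I will verify this sign carefully.

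The main obstacle is purely computational bookkeeping in $vA_2$. It mixes five terms of degree 6 in $(u,v)$, including $vP^3\frac{m-1}{u}=8(m-1)u^2v^4$ and $P^2Q(m-1)$. The target is
\begin{align*}
-2v^2(u^2-v^2)\bigl((d^2-2dm+4d)u^2+(d+2)^2v^2\bigr),
\end{align*}
and signs are easy to lose. I would first check this at special values (for example $v=0$, and the $u^4v^2$ coefficient), then do full expansion, as in Lemma \ref{lemp+} with symbolic (Mathematica-style) verification.
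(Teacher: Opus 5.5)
Your proposal is correct and follows essentially the paper's route: start from Lemma \ref{lemdiv}, substitute $P^-=2uv$ and $Q^-=du^2-(d+2)v^2$, and factor $u^2-v^2$ out of each of the three coefficients. The paper instead subtracts the $(u,u)$ value via the overline, which changes nothing here because $vA_1$, $vA_2$, $vA_3$ each vanish at $v=u$, as you propose to check, and your targets are right: $Q^2-P^2=(u^2-v^2)(d^2u^2-(d+2)^2v^2)$ gives $+uP_1^-$, and $vA_2=2v^2\bigl(Q^2+2(m-2-d)u^2Q+4(m-3-d)u^2v^2\bigr)$ factors to $(u^2-v^2)P_2^-(u^2,v^2)$.
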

Set
\begin{align*}
	B^-=\frac{m+1}{2}P_1^-(u^2,v^2)(uDf\cdot Du)+P_2^-(u^2,v^2)f|Du|^2+P_1^-(u^2,v^2)(uf\De u)+(m-1)P_3^-(u^2,v^2).
\end{align*}
\begin{proof}
	By Lemma \ref{lemdiv}, we have
	\begin{align*}
		&[vf^{2}\left(P^2|Du|^2+f\m Q^2\right)^{\frac{3}{2}}\di_g\frac{\na^g F}{\no{\na^g F}_g}](u,v,Du)\\
		&[vf^{2}\left(P^2|Du|^2+f\m Q^2\right)^{\frac{3}{2}}\di_g\frac{\na^g F}{\no{\na^g F}_g}](u,v,Du)-[vf^{2}\left(P^2|Du|^2+f\m Q^2\right)^{\frac{3}{2}}\di_g\frac{\na^g F}{\no{\na^g F}_g}](u,u,Du)\\=&\frac{m+1}{2}\ov{vP(Q^2-P^2)}Df\cdot Du+\ov{v\left(Q^2P_u-PQ(P_v+Q_u)+P^2Q_v+P^3\frac{m-1}{u}+P^2Q\frac{m-1}{{v}}\right)}f|Du|^2\\
		&+\ov{vP\left(Q^2-P^2\right)}f\De u+\ov{v\left(P^3\frac{m-1}{u}+Q^3\frac{m-1}{{v}}\right)}.
	\end{align*}
	Recall Fact \ref{fctpq}.  We have  $P(u,v)=2uv$ and $Q(u,v)=du^2-(d+2)v^2$. This implies.
	\begin{align*}
		P_u=&2v,P_v=2u,Q_u=2du,Q_v=-2(d+2)v,\\
		\ov{vP(Q^2-P^2)}=&(u^2-v^2)2uv^2\left(d^2 u^2 + (-4 - 4 d - d^2) v^2\right),\\
		\ov{v\left(P^3\frac{m-1}{u}+Q^3\frac{m-1}{{v}}\right)}=&(m-1)(u^2-v^2)\left(d^3 u^4+\left(-2 d^3-6 d^2\right) u^2 v^2+\left(d^3+6 d^2+12 d+8\right) v^4\right)
	\end{align*}
	and
	\begin{align*}
		&\ov{v\left(Q^2P_u-PQ(P_v+Q_u)+P^2Q_v+P^3\frac{m-1}{u}+P^2Q\frac{m-1}{{v}}\right)}\\=&(u^2-v^2)(-2v^2)\left(u^2 \left(d^2-2 d m+4 d\right)+\left(d^2+4 d+4\right) v^2\right).	\end{align*}
	Mathematica verification will be provided at the end.
\end{proof}

\subsection{Positivity of $B^-$ on $\Om_K\cup\oi\cup \ot$}
Recall Fact \ref{fctfu}. We have
\begin{align*}
	&B^-\\
	=&P_1^-(u^2,v^2)O(\tau)+P_2^-(u^2,v^2)\ai_0^2(1+O(\tau))+(m-1)(1+O(\tau))P_1^-(u^2,v^2)+(m-1)P_3^-(u^2,v^2)\\
	=&v^4\left((m-1)P_1^-(t^-,1)+P_2^-(t^-,1)\ai_0^2+(m-1)P_3^-(t^-,1)+O(\tau)P_1^-(t^-,1)+O(\tau)P_2^-(t^-,1)\right),
\end{align*}
with $t^-=\frac{u^2}{v^2}\in[0,1].$

To prove Lemma \ref{lemb+}, by Fact  \ref{fctpoly}, it suffices to verify that 
\begin{align*}
	V^-(t^-)=(m-1)P_1^-(t^-,1)+P_2^-(t^-,1)\ai_0^2+(m-1)P_3^-(t^-,1)
\end{align*} has positive infimum for $t^-\in[0,1].$ 

For $(n,m)=(5,3),d=\frac{9}{5},$ direct calculation,
\begin{align*}
	V^-(t^-)=	(m-1)P_1^-(t^-,1)+P_2^-(t^-,1)\ai_0^2+(m-1)P_3^-(t^-,1)\ge\frac{8}{25},\forall t^-\in[0,1].
\end{align*}
For $(n,m)=(4,4),d=\frac{9}{5},$ direct calculation,
\begin{align*}
	V^-(t^-)=	(m-1)P_1^-(t^-,1)+P_2^-(t^-,1)\ai_0^2+(m-1)P_3^-(t^-,1)\ge\frac{16}{25},\forall t^-\in[0,1].
\end{align*}

For $n+m\ge 9,n\ge 3,m\ge 2,d=3$, we have
\begin{align*}
	V^-(0)=&\frac{25 (m-1) (3 n-5)}{n-1}>0,\\
	V^-(1)=&\frac{4 (m-1) (3 m+3 n-26)}{n-1}>0.
\end{align*}
As $V^-(t)$ is a quadratic polynomial, its minimum on $[0,1]$ is either achieved at endpoints or the axis of symmetry. The axis of symmetry of $V^-(t)$ is
\begin{align*}
	t_0=\frac{2 m-15 n+8}{9-9 n}.
\end{align*}
If $t_0\in [0,1]$, then we have 
\begin{align*}
	0\ge 2 m-15 n+8\ge9-9 n.
\end{align*}
On the other hand $m\ge 9-n,$ so we have
\begin{align*}
	 2 m-15 n+8\ge 26 - 17 n.
\end{align*}
Multiplying gives
\begin{align*}
	(2 m-15 n+8)^2\le (9-9n)(26-17n)
\end{align*}
We have
\begin{align*}
	V^-(t_0)=&\frac{(m-1) (-4 m (m-15 n+8)-360 n+311)}{3 (n-1)^2}\\
	=&\frac{(m-1) (-4 m (m-15 n+8)-360 n+311+(2 m-15 n+8)^2-(2 m-15 n+8)^2)}{3 (n-1)^2}\\
	\ge&\frac{(m-1) (-4 m (m-15 n+8)-360 n+311+(2 m-15 n+8)^2-(9-9n)(26-17n))}{3 (n-1)^2}\\
	=&\frac{(m-1)3 (n-1) (24 n-47)}{3(n-1)^2}\\
	>&0.
\end{align*}We are done.
\subsection{Positiveity of $B^-$ on $\oo$}
On $\oo,$ we have $f=1$. Set $$t^-=\frac{u^2}{v^2}\in[0,1],$$ so we have
\begin{align*}
	B^-=&P_2^-(u^2,v^2)|Du|^2+P_1^-(u^2,v^2)(u\De u)+(m-1)P_3^-(u^2,v^2)\\
	=&v^4\left(P_2^-(t^-,1)|Du|^2+P_1^-(t^-,1)\left(u\De u-(m-1)\right)+(m-1)(P_3^-(t^-,1)+P_1^-(t^-,1))\right)
\end{align*}
Note that $P_1(t^-,1)<0$ always holds. 
Set
\begin{align*}
	&L^-(t^-)=P_2^-(t^-,1)|Du|^2+P_1^-(t^-,1)\left(u\De u-(m-1)\right)+(m-1)(P_3^-(t^-,1)+P_1^-(t^-,1))
\end{align*}
Let $L^-_2,L^-_1,L^-_0$ be the coefficients of $(t^-)^2,(t^-)^1,(t^-)^0,$ respectively. Recall Fact \ref{fctl}. To prove Lemma \ref{lemb-}, it suffices to show that 
\begin{lem}\label{lemf-}
	\begin{enumerate}
		\item	$L^-_2+L^-_1+L^-_0\ge 0$, 
		\item  $L^-_0\ge 0$,
		\item $L^-_1+2L^-_0\ge 0.$
	\end{enumerate}
\end{lem}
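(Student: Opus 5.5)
The plan is to treat Lemma \ref{lemf-} exactly as Lemma \ref{lemf+} was treated, computing the three coefficient combinations explicitly and bounding them through Lemma \ref{lemudum}, (\ref{eqfl}). Write $X=u\De u-(m-1)$ and $s=\py\in[0,\ai_0^2)$.

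\textbf{Step 1: the coefficient combinations.} From Lemma \ref{lemp-},
\begin{align*}
P_1^-(t,1)&=2\left(d^2t-(d+2)^2\right),\\
P_2^-(t,1)&=-2\left((d^2-2dm+4d)t+(d+2)^2\right),\\
P_3^-(t,1)&=d^3t^2-(2d^3+6d^2)t+(d+2)^3.
\end{align*}
Evaluating at $t=0$ and $t=1$ gives
\begin{align*}
L_0^-&=(d+2)^2\left(-2|Du|^2-2X+d(m-1)\right),\\
L_2^-+L_1^-+L_0^-&=-2(2d^2+8d+4-2dm)|Du|^2-8(d+1)X+4d(m-1).
\end{align*}
The combination $L_1^-+2L_0^-$ is obtained the same way from the $t$-coefficients.

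\textbf{Step 2: reduction to the Cauchy--Schwarz form.} In each of the three expressions the coefficient of $X$ is negative, since $P_1^-(t,1)<0$ for $t\in[0,1]$. Hence (\ref{eqfl}) may be inserted as an upper bound on $X$, giving a lower bound. I would also replace $|Du|^2$ by $s+O(\de)$ using (\ref{eqdu2}). Each quantity is then bounded below by
\begin{align*}
c_0(m-1)-c_1s-c_2R(\pyy)s+O(\de),
\end{align*}
with explicit constants $c_i=c_i(d,m)$. Following the $u\ge v$ section, I rewrite $(1-\epsilon)(m-1)=(1-\epsilon)(n-1)(\ai_0^2-s)+(1-\epsilon)(n-1)s$ with $\epsilon=0.01$ or $0.1$. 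Each expression then takes the form
\begin{align*}
a(\ai_0^2-s)+bs-\frac{c(m-1)(\ai_0^2-s)s}{2n(\ai_0^2-s)+(m+n-1)s}+\epsilon(m-1)+O(\de).
\end{align*}
Here $a=(1-\epsilon)c_0(n-1)$ and $b=(1-\epsilon)c_0(n-1)-c_1$. By Fact \ref{fctcs}, it suffices to check $b\ge0$ together with
\begin{align*}
\left(\sqrt{a(m+n-1)}+\sqrt{2nb}\right)^2\ge c(m-1).
\end{align*}
With these verified, each expression is at least $\epsilon(m-1)+O(\de)>0$ for $\de$ small.

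\textbf{Step 3: case checks.} For $d=\frac95$ only $(n,m)=(4,4)$ and $(5,3)$ occur, so the inequalities reduce to numerical checks. For $L_0^-$ in particular, the cruder bound (\ref{eqls}) may already suffice there.

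For $d=3$ with $n+m\ge9$, I would fix $N=n+m$ and show the difference $\left(\sqrt{a(m+n-1)}+\sqrt{2nb}\right)^2-c(m-1)$ is monotone in $m$. I then evaluate it at the extreme admissible value, which is either $n=3$ or $m=2$ depending on the sign of the monotonicity, and finally at $N=9$. This mirrors Subsection \ref{secl012}.

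\textbf{Main obstacle.} The hardest step should be $L_0^-$ for $d=3$ when $n=3$ and $m$ is large. There the crude bound $R\le\frac{m-1}{n}$ fails: it would require $2(n+m-1)\le dn(n-1)$, which breaks for $n=3$, $m\ge8$. So the Cauchy--Schwarz route is genuinely needed, using that $R$ vanishes at $s=\ai_0^2$. The identity $d(m-1)=d(n-1)(\ai_0^2-s)+d(n-1)s$ gives $b=d(n-1)-2\ge0$, and the resulting inequality
\begin{align*}
\left(\sqrt{3(n-1)(m+n-1)}+\sqrt{2n(3(n-1)-2)}\right)^2\ge 2(m-1)
\end{align*}
is comfortable since $3(n-1)(m+n-1)$ alone exceeds $2(m-1)$. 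The remaining risk is that $b$ becomes negative in $L_1^-+2L_0^-$ for small $n$. If so, I would shift a larger share of $(m-1)$ into the $s$-term rather than the $(\ai_0^2-s)$-term before applying Fact \ref{fctcs}.
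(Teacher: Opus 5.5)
Your proposal is correct and follows the paper's proof essentially step for step. You insert (\ref{eqfl}) using the negativity of the coefficient of $u\De u-(m-1)$, split $(m-1)=(n-1)(\ai_0^2-s)+(n-1)s$ while reserving a small multiple of $(m-1)$ to absorb the $O(\de)$ terms, and close each of the three inequalities via Fact \ref{fctcs} plus explicit checks for $d=3$ and $d=\frac95$. Two small corrections to your write-up. In your $L_0^-$ discussion the right-hand side should be $4(m-1)$, not $2(m-1)$, since $R$ carries the factor $2(m-1)$; this still holds because $3(n-1)(m+n-1)\ge 6(m+2)>4(m-1)$. Also, the feared negativity of $b$ in $L_1^-+2L_0^-$ does not occur: the paper's $b$ is $29.9(n-1)+6m-71>0$ for $d=3$, and $27.3(n-1)+7.2m-78.64>0$ for $d=\frac95$.
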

The rest of this section will be devoted to the proof of Lemma \ref{lemf-}. We will split into subcases of $d=3,d=\frac{9}{5}$ and the three bullets of Lemma \ref{lemf-}.
\subsection{$d=3$}
In this subsection we deal with $d=3$, $n\ge 3,m\ge 2,n+m\ge 9$.
\subsubsection{$d=3$, $L^-_2+L^-_1+L^-_0\ge 0$}
We have
\begin{align}\label{eql012-}
\frac{1}{4}\left(L^-_2+L^-_1+L^-_0\right)\ge&
-3 + 3 m + (-23 + 3 m - 8R(\pyy) ) \py+O(\de).\end{align}Note that except for possible differences in the $O(\de)$ term, (\ref{eql012-}) is exactly the same as (\ref{eql012}). Thus, the same reasoning as in Section \ref{secl012} works and we have
\begin{align*}
	\frac{1}{4}\left(L^-_2+L^-_1+L^-_0\right)>0.01(m-1)+O(\de)>0,
\end{align*}provided $\de$ small.
\subsubsection{$d=3$, $L_0^-\ge 0$}
We have
\begin{align*}
\frac{1}{25}L_0^-\ge&	-3 + 3 m + \left(-2 - 2 R(\pyy)\right) \py+O(\de)\\
\ge&2.9(n-1)(\ai_0^2-\py)+\left(2.9n-4.9\right)\py\\
&-\frac{4(m-1)(\ai_0^2-\py)\py}{2n(\ai_0^2-\py)+(m+n-1)\py}\\&+0.1(m-1)+O(\de).
\end{align*}
We have
\begin{align*}
	&\left(\sqrt{2.9(n-1)(m+n-1)}+\sqrt{(2.9n-4.9)2n}\right)^2\\
	\ge&2.9(n-1)(m+n-1)\\
	\ge&5.8(m-1)\\
	>&4(m-1).
\end{align*}
Thus, we have
\begin{align*}
	\frac{1}{25}L_0^->0.1(m-1)+O(\de)>0,
\end{align*}provided $\de$ small.
\subsubsection{$d=3$, $L_1^-+2L_0^-\ge 0$}
We have
\begin{align*}
\frac{1}{2}\left(	L_1^-+2L_0^-\right)\ge&-30 + 30 m + (-71 + 6 m - 41 R(\pyy)) \py+O(\de)\\
\ge&29.9(n-1)(\ai_0^2-\py)+\left(-71+6m+29.9(n-1)\right)\py\\&-\frac{82(m-1)(\ai_0^2-\py)\py}{2n(\ai_0^2-\py)+(m+n-1)\py}\\&+0.1(m-1)+O(\de).
\end{align*}
By $n\ge 3,$ we have
\begin{align*}
&\left(\sqrt{29.9(n-1)(m+n-1)}+\sqrt{\left(-71+6m+29.9(n-1)\right)2n}\right)	^2\\
\ge&59.8(m-1+2)+36(m-1)+36+6(59.8-71)\\
>&82(m-1).
\end{align*}
Thus, we have
\begin{align*}
	\frac{1}{2}\left(	L_1^-+2L_0^-\right)>0.1(m-1)+O(\de)>0,
\end{align*}provided $\de$ small.
\subsection{$d=\frac{9}{5}=1.8$}
Now we deal with $d=3$, $(n,m)=(4,4)$ or $(5,3)$. 
\subsubsection{$d=1.8,$ $L^-_2+L^-_1+L^-_0\ge0$}\label{secl21018-}We have
\begin{align}\label{eql01218-}
	L^-_2+L^-_1+L^-_0\ge&-7.2 + 7.2 m + (-49.76 + 7.2 m - 22.4 R(\pyy)) \py+O(\de)
\end{align}Again, (\ref{eql01218-}) is the same as (\ref{eql01218}) barring possible differences in $O(\de)$. The same reasoning as Section \ref{secl01218} applies and we deduce that
\begin{align*}
		L^-_2+L^-_1+L^-_0>0.1(m-1)+O(\de)>0,
\end{align*}provided $\de$ small.
\subsubsection{$d=1.8$, $L_0^-\ge 0$}\label{secl0-}
We have
\begin{align*}
	L_0^-\ge&-25.992 + 25.992 m + (-28.88 - 28.88 R(\pyy))\py+O(\de)\\
	\ge&25.99(n-1)(\ai_0^2-\py)+(-28.88+25.99(n-1))\py\\&-\frac{57.76(m-1)(\ai_0^2-\py)\py}{2n(\ai_0^2-\py)+(m+n-1)\py}\\&+0.002(m-1)+O(\de).
\end{align*}
We have
\begin{align*}
	&\left(\sqrt{25.99(n-1)(m+n-1)}+\sqrt{2n(-28.88+25.99(n-1))}\right)^2\\
	\ge&\begin{cases}
		1864,\textnormal{ if }n=4,\\
		2956,\textnormal{ if }n=5,
	\end{cases}\\\ge&57.76(m-1).
\end{align*}
Thus, we have
\begin{align*}
	L_0^->0.002(m-1)+O(\de)>0,
\end{align*}provided $\de$ small.
\subsection{$d=1.8$, $L_1^-+2L_0^-\ge 0$}\label{secl10-}
We have
\begin{align*}
	L_1^-+2L_0^-\ge&-27.36 + 27.36 m + (-78.64 + 7.2 m - 51.28 R(\pyy)) \py+O(\de)\\
	\ge&27.3(n-1)(\ai_0^2-\py)+(27.3(n-1)-78.64+7.2m)\py\\
	&-\frac{102.56(m-1)(\ai_0^2-\py)\py}{2n(\ai_0^2-\py)+(m+n-1)\py}\\&+0.06(m-1)+O(\de).
\end{align*}
We have
\begin{align*}
&	\left(\sqrt{27.3(n-1)(m+n-1)}+\sqrt{2n(27.3(n-1)-78.64+7.2m)}\right)^2	\\\ge&\begin{cases}
		1596,\textnormal{ if }n=4,\\
		2548,\textnormal{ if }n=5,
	\end{cases}\\\ge&102.56(m-1).
\end{align*}
\section{Wrap up the proof}\label{secwrap}
Lemma \ref{lemb+} and Lemma \ref{lemb-} together prove Lemma \ref{lemsub}, giving the first and last bullet of Fact \ref{fctsubc}. To verify the second and the third bullet of Fact \ref{fctsubc} we need to check the regularity of $\frac{\na^g E}{\no{\na^gE}_g}$. Set 
\begin{align*}
	\sing\frac{\na^g E}{\no{\na^gE}_g}=&\{(x,\xi,y)|\xi=0\}\cup\{(x,\xi,y)|x=0\},\\\sing^{Lip}\frac{\na^g E}{\no{\na^gE}_g}=&	\sing\frac{\na^g E}{\no{\na^gE}_g}\cup M.
\end{align*}Let us first verify that the set of discontinuity of the vector field $\frac{\na^g E}{\no{\na^gE}_g}$ is contained in $	\sing\frac{\na^g E}{\no{\na^gE}_g}$.The discontinuity can happen in three instances,
\begin{itemize}
	\item $\na^gE=0,$
	\item $\na^gE\not=0$ and $\na^gE$ is not continuous.
	\item on $M$ where $F^+$ and $F^-$ patch together.
\end{itemize}
For the first case, note that $\na v$ never vanishes, except at $\xi=0,$ where it is undefined. Thus, by (\ref{eqnae1}), for $u\ge v,$ $\na^g E=0$ only when $uv=0,$ i.e., $u=0$ or $v=0.$ As $u\ge v\ge0,$ $u=0$ forces $v=0.$ On the other hand, by (\ref{eqnae2}), when $v\ge u,$ $\na^gE=0$ only when $du^2-(d+2)v^2,$ which only happens when $u=v=0.$ To sum it up, $\na^gE(x,\xi,y)=0$ implies $v=0.$

For the second case, $\na^g E$ is nonzero but not continuous. This happens when $u,\na u$ or $\na v$ is not continuous, i.e., on $\sing M\cup v\m(0)\s \{x=0\}\cup\{\xi=0\}.$

For the last case, note that $\frac{\na^g E}{\no{\na^gE}_g}$ is continuous across $M\setminus\sing\frac{\na^g E}{\no{\na^gE}_g},$ as
\begin{align*}
	&P^+=P^-,Q^+=Q^-,
\end{align*} on $M.$ 
 Since $\{x=0\}\cup\{\xi=0\}$ has Hausdorff dimension at most $\max\{n+l,m+l\}$, we are done verifying the second bullet of Fact \ref{fctsubc}. 

As $M$ is a stable stationary varifold, by the monotonicity formula \cite{WA}, the Hausdorff $(n+m+l-1)$-dimensional measure restricted to $M$ is $\si$-finite, so does the restriction to $M\cup\sing\frac{\na^g E}{\no{\na^gE}_g}.$
The third bullet of Fact \ref{fctsubc} follows directly as $\frac{\na^g F^+}{\no{\na^gF^-}_g}$ and $\frac{\na^g F^-}{\no{\na^gF^-}_g}$ are locally Lipschitz in the complement of $\sing^{Lip}\frac{\na^g E}{\no{\na^gE}_g}$.
\section{Discussions}\label{secconc}
\subsection{One-sided area-minimizing}
Simon's construction covers $n\ge 3,m\ge 2,n+m\ge 8.$ 

In these ranges, every Lawson cone $M_{(n,m)}$ is area-minimizing, except for $M_{(6,2)}$ \cite{BLep,PSmc}.
\begin{fact}
	\cite{FLms} The Lawson cone $M_{(6,2)}$ and $M_{(2,6)}$ are one-sided area-minimizing.
\end{fact}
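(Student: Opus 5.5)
The plan is to run the subcalibration argument of Fact \ref{fctsubc} on one side of the cone only, in the flat metric $\de$, using the profile $E$ of Definition \ref{defnh} with $u=\ai_0 r$. Since $M_{(2,6)}$ is isometric to $M_{(6,2)}$ by swapping the $x$ and $\xi$ factors, and this swap exchanges the two sides of the cone, it suffices to treat $M_{(6,2)}$ and prove one-sided minimality from each side. Here $n=6$, $m=2$, $\ai_0^2=\frac15$.

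The first step is to specialize the earlier computations to the cone. Put $f\equiv1$ and $u=\ai_0 r$ (so $l=0$, or take a trivial product with $\R^l$). Then, exactly as in Fact \ref{fctfu} with $\tau=0$,
\begin{align*}
Df=0,\qquad |Du|^2=\ai_0^2,\qquad u\De u=m-1 .
\end{align*}
Lemma \ref{lemp+} and Lemma \ref{lemp-} are purely algebraic, so they remain valid. On the side $u\ge v$ they give $B^+=u^4V^+(t^+)$ with $t^+=v^2/u^2\in[0,1]$. On the side $u\le v$ they give $B^-=v^4V^-(t^-)$ with $t^-=u^2/v^2\in[0,1]$. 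Here $V^\pm$ are the explicit quadratic polynomials of Section \ref{secfact+} and Section \ref{secfact-}, now evaluated at $(n,m)=(6,2)$. These polynomials depend on $d$, and $d$ is no longer tied to the values $3$ or $\frac95$.

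The second step handles the side $\{|\xi|\le u\}$, with competitors constrained to lie in that side. I would look for some $d>0$ for which $V^+(t)\ge0$ on $[0,1]$, checked through Fact \ref{fctl} by verifying
\begin{align*}
L_0\ge0,\qquad L_1+2L_0\ge0,\qquad L_2+L_1+L_0\ge0 .
\end{align*}
Only one branch is needed. I would therefore define $E$ on this side by $F^+$ alone, and on the complement by the signed distance to $M$ (or any function whose level sets are $M$-parallel). This is harmless because a competitor $T$ with $\pd T=\pd(M\llcorner B)$ lying in $\{|\xi|\le u\}$ bounds, together with $M$, a region contained in that side. The divergence theorem, applied as in the proof of Fact \ref{fctsubc} to the vector field $\frac{\na E}{\no{\na E}}$ on that region, then gives $\mathbf M(M\llcorner B)\le \mathbf M(T)$. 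The sign condition only enters inside the region, and the boundary term on $M$ has unit flux. The regularity requirements are checked exactly as in Section \ref{secwrap}: the singular set is contained in $\{x=0\}\cup\{\xi=0\}$.

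The third step repeats the second on the side $\{|\xi|\ge u\}$ using $V^-$, possibly with a different $d$. Since the two sides are exchanged under the isometry $M_{(6,2)}\leftrightarrow M_{(2,6)}$, the two one-sided statements together give both claimed facts.

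The main obstacle is the existence of a suitable $d$ for $(6,2)$. Because $M_{(6,2)}$ is not two-sided minimizing, no single $d$ can work on both sides. What must be shown is that the sign condition can be met one side at a time, and it is not clear a priori that some $d$ works for either side. I would first compute $V^\pm(0)$ and $V^\pm(1)$ symbolically in $d$ and examine the vertex as in the $d=3$ argument. If no $d$ works for the pure power profile, I would try a more general profile $(u^2-v^2)\,u^{d}\,g(v/u)$, following the modification of the vector field mentioned in the introduction, and reduce the sign condition to a one-variable ODE inequality for $g$.
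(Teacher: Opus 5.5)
\textbf{Gap: the plan targets a false statement.} You set out to prove that $M_{(6,2)}$ is one-sided minimizing from \emph{each} side. No argument can give that. Suppose $O$ is one of the two regions bounded by the cone, and $O$ minimizes perimeter both against competitors $O'\s O$ and against competitors $O'\supset O$. Take an arbitrary compactly supported perturbation $O'$. Submodularity, $\operatorname{Per}(O\cap O')+\operatorname{Per}(O\cup O')\le \operatorname{Per}(O)+\operatorname{Per}(O')$, combined with $\operatorname{Per}(O)\le \operatorname{Per}(O\cap O')$ and $\operatorname{Per}(O)\le\operatorname{Per}(O\cup O')$, gives $\operatorname{Per}(O)\le\operatorname{Per}(O')$. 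So the cone would be area-minimizing, contradicting \cite{BLep,PSmc}.

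Inside your own framework the contradiction is even more direct. On $M$ one has $P^\pm(u,u)=2u^2$ and $Q^\pm(u,u)=-2u^2$ for every $d$. So one-sided profiles with different exponents on the two sides glue into a continuous unit field, and Fact \ref{fctsubc} would apply. Your remark that ``no single $d$ can work on both sides'' therefore misplaces the obstruction. The real obstruction is that nothing works on the side $\{|\xi|\le u\}$. Concretely, for $(n,m)=(6,2)$ one finds
\begin{align*}
V^+(t)=4dt+\tfrac15\left(d^3t^2-2d(d+1)(d+2)t+(d+2)^2(d-2)\right).
\end{align*}
Hence $V^+(0)=\frac{(d+2)^2(d-2)}{5}$ and $V^+(1)=-\frac{4(d-1)(d-2)}{5}$, which together force $d=2$. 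At $d=2$, $V^+(t)=\frac{8t(t-1)}{5}<0$ on $(0,1)$. Your fallback with a general profile $(u^2-v^2)u^dg(v/u)$ cannot rescue this side either, because the inequality it would establish is false.

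\textbf{The fix.} Read the Fact as a single statement. The two labels denote isometric copies of one cone, and that cone is minimizing from one side only: $\{|\xi|\ge\ai_0|x|\}$ for $M_{(6,2)}$, equivalently $\{|\xi|\le\sqrt5\,|x|\}$ for $M_{(2,6)}$. Your third step alone proves this. Take $d=\frac95$; then $V^-(t)=5.832t^2-25.92t+20.216$. Its vertex lies at $t\approx 2.22>1$, so $V^-\ge V^-(1)=\frac{16}{125}>0$ on $[0,1]$. Your one-sided divergence argument then finishes the proof. The paper itself only cites \cite{FLms} for this Fact, but its Theorem \ref{thmo} at $\tau=0$ is exactly this computation on exactly this side. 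Drop your second step and the claim that both sides are needed.
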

Using the same estimates in this manuscript, we have
\begin{thm}\label{thmo}
	The hypersurface $M\s\R^{6+2+l}$ in \cite[Theorem 3.7]{LSfr} is area-minimizing in the set $$\{(x,\xi,y)|v(x,\xi,y)-u(x,\xi,y)\ge0\},$$ provided the construction parameters $\tau$ and $\de$ are small. 
\end{thm}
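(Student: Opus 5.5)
Fix $(n,m)=(6,2)$. The plan is to run the one-sided version of the subcalibration argument of Fact \ref{fctsubc}, using only the branch $F^-$ on the region $\{v\ge u\}$. The profile is $E=F^-=(u^2-v^2)v^d$ restricted to $W=\{v-u\ge0\}$, where $M=\partial W$ and $E\le0$ on $W$.

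The one-sided divergence-theorem argument is as follows. Take a competitor $T$ with $\partial T=\partial(M\llcorner B)$ and support in $W$. Then $M-T=\partial O'$ with $O'\subset W$, and we integrate $\di_g X$ over $O'$ for the vector field $X=\frac{\na^g F^-}{\no{\na^g F^-}_g}$. We need $\di_g X\le0$ on $W$, $X$ equal to the unit normal on $M$, and $|X|\le1$ on $T$. This gives $\mathbf M(M\llcorner B)\le\mathbf M(T)$. The integral and mod $2$ statements then follow as in the proof of Fact \ref{fctsubc}, via \cite{DPggdiv,LPgp,BWmod4}. Regularity of $X$ off $\{x=0\}\cup\{\xi=0\}\cup M$ is checked exactly as in Section \ref{secwrap}; only the $F^-$ branch is now involved, so there is no patching issue.

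The sign condition reduces by Lemma \ref{lemp-} to $B^-\ge0$, since $u^2-v^2\le0$ on $W$. Thus it suffices to redo Section \ref{secfact-} for $(n,m)=(6,2)$ with a suitable $d$. I would first try $d=3$, since the $d=3$ arguments there used $n+m\ge9$ only at a few places. I would also keep $d$ as a free parameter, to be tuned if needed.

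The steps are as follows.

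\textbf{Region $\Om_K\cup\oi\cup\ot$.} Compute $V^-(t)=(m-1)P_1^-(t,1)+P_2^-(t,1)\ai_0^2+(m-1)P_3^-(t,1)$ with $\ai_0^2=\frac15$ and $m=2$. Check that it has positive infimum on $[0,1]$ via its endpoints and vertex, then apply Fact \ref{fctpoly}. For $d=3$, the paper's formulas give $V^-(0)=25(3n-5)/(n-1)>0$. However, $V^-(1)=4(m-1)(3m+3n-26)/(n-1)=4\cdot(-2)/5<0$. So $d=3$ fails here, and $d$ must be changed.

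Since $V^-(1)$ is the value on $M$ itself, i.e. at $t=1$, I would compute $V^-(1)$ as a function of $d$ for $(n,m)=(6,2)$ and pick $d$ making it positive. I expect some $d\in(0,3)$, analogous to the choice $9/5$ for $n+m=8$, to work. I would then verify $V^-(0)>0$ and the vertex value, as was done for $(4,4),(5,3)$.

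\textbf{Region $\oo$.} Verify Lemma \ref{lemf-} for the chosen $d$: namely $L_0^-\ge0$, $L_1^-+2L_0^-\ge0$, and $L_2^-+L_1^-+L_0^-\ge0$. Each quantity has the form $c(m-1)+(\alpha-\beta R(\phi'))\py+O(\de)$. This uses (\ref{eqfl}) of Lemma \ref{lemudum}, which holds for all $n\ge3$ and so for $n=6,m=2$, together with $P_1^-(t,1)<0$ so that the upper bound on $u\De u$ enters with the right sign. Split off $(c-\epsilon)(n-1)(\ai_0^2-\py)$ from $c(m-1)$, using $(m-1)=(n-1)\ai_0^2$. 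Then apply Fact \ref{fctcs} to dominate the $R$-term, reducing each case to a numerical inequality $\bigl(\sqrt{\cdot}+\sqrt{\cdot}\bigr)^2>\beta(m-1)$. The remaining slack $\epsilon(m-1)$ absorbs the $O(\de)$ for $\de$ small.

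The main obstacle is choosing a single $d$ satisfying all four families of inequalities simultaneously for $(6,2)$. Since $M_{(6,2)}$ is not area-minimizing, the $F^+$ side genuinely fails, so the margin on the $F^-$ side is expected to be thin. This is especially so at $t=1$ and in the $L_2^-+L_1^-+L_0^-$ condition, where the coefficient $\alpha$ multiplying $\py$ may be negative. If so, Fact \ref{fctcs} cannot be applied directly. I would then instead treat the whole expression as a rational function of $s=\phi'^2\in[0,\ai_0^2]$ and verify positivity of its numerator quadratic directly via Fact \ref{fctl}.
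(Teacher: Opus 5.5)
Your outline is the paper's proof: you use only the $F^-$ branch on $\{v\ge u\}$, reduce via Lemma \ref{lemp-} to $B^-\ge0$, show $V^-$ has positive infimum on the almost-conical region and apply Fact \ref{fctpoly}, and prove Lemma \ref{lemf-} on $\oo$ by splitting off $(c-\epsilon)(n-1)(\ai_0^2-\py)$ and applying Fact \ref{fctcs}. The gap is that you stop exactly where the paper's proof starts. You never fix $d$ or check the four inequalities, and that finite check is the whole $(6,2)$-specific content of the theorem. Your diagnosis of $d=3$ is correct and can be sharpened. For $m=2$ one has $d^2-2dm+4d=d^2$, and a short computation gives $V^-(1)=-\frac{4}{5}(d-1)(d-2)$. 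So the admissible window is $1<d<2$, not all of $(0,3)$. The paper takes $d=\frac{9}{5}$. Then $V^-(t)=\frac{729}{125}t^2-\frac{648}{25}t+\frac{2527}{125}$ has its vertex at $t=\frac{20}{9}>1$, hence $\inf_{[0,1]}V^-=V^-(1)=\frac{16}{125}$.

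With $d=\frac{9}{5}$, $m=2$ and $R=R(\phi'(|x|/\lam_y))$, the three bounds on $\oo$ read:
$L_2^-+L_1^-+L_0^-\ge 7.2-(35.36+22.4R)\py+O(\de)$, $L_0^-\ge 25.992-(28.88+28.88R)\py+O(\de)$, and $L_1^-+2L_0^-\ge 27.36-(64.24+51.28R)\py+O(\de)$.
Your worry that a negative coefficient of $\py$ blocks Fact \ref{fctcs} in the first bound goes away once you do the split you already proposed. Writing $7.2=35.5(\ai_0^2-\py)+35.5\py+0.1$ leaves the coefficient $0.14>0$. Fact \ref{fctcs} then only needs $(\sqrt{7.1\cdot 5\cdot 7}+\sqrt{12\cdot 0.14})^2\approx 291>44.8$. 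The other two reduce in the same way to $\approx 4223>57.76$ and $\approx 3643>102.56$. So no fallback to a rational-function analysis is needed. The only thin margin is $\frac{16}{125}$, reached as $t\to1$ and $\py\to\ai_0^2$, and the reserved slack $0.1(m-1)$ absorbs the $O(\de)$ terms for small $\de$. With these checks added, your argument is complete and matches the paper's.
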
 
\begin{proof}
Use the same notation as in Section \ref{secfact-}. Set $d=\frac{9}{5},(n,m)=(6,2).$ We only need to prove that 
\begin{align*}V^-(t^-)&>0,\forall t^-\in[0,1],\\
	L^-_2+L_1^-+L_0^-&\ge 0,L_0^-\ge 0,L_1^-+2L_0^-\ge 0.
\end{align*}Direct calculation gives,
\begin{align*}
V^-(t^-)=	(m-1)P_1^-(t^-,1)+P_2^-(t^-,1)\ai_0^2+(m-1)P_3^-(t^-,1)\ge\frac{16}{125},\forall t^-\in[0,1].
\end{align*}
By the same symbolic calculations as in Sections \ref{secl21018-} and \ref{secl01218},  $L^-_2+L_1^-+L_0^-\ge0,$ follows from 
\begin{align*}
	\left(	\sqrt{7.1(n-1)(m+n-1)}+\sqrt{2n\left(0.1m-0.06\right)}
	\right)^2
	\ge282>44.8(m-1)\end{align*}

By the same symbolic calculations as in Section \ref{secl0-}, $L_0^-\ge 0$ follows from 
\begin{align*}
\left(\sqrt{25.99(n-1)(m+n-1)}+\sqrt{2n(-28.88+25.99(n-1))}\right)^2	\ge 4223>57.76(m-1).
\end{align*}

By the same symbolic calculations as in Section \ref{secl10-}, $L_1^-+2L_0^-$ follows from
\begin{align*}
	\left(\sqrt{27.3(n-1)(m+n-1)}+\sqrt{2n(27.3(n-1)-78.64+7.2m)}\right)^2\ge 3643>102.56(m-1).
\end{align*}
Then the same reasoning as in Section \ref{secwrap} finishes the proof.
\end{proof}\subsection{The choice of $E$}
Besides Definition \ref{defnh}, the author has tried $E$ being quartic polynomials in $u,v$. In the case of the original Lawson cones, both types of $E$ provide subcalibrations via $\frac{\na E}{|\na E|}$. 

In particular, when $m=n,$ using $E=u^4-v^4$ as in \cite{GDEPss} lead to significantly shorter proof for $m,n$ sufficiently large. Unfortunately, they do not work for all cases of Lawson cones. On the other hand, general quartic polynomials do cover all Lawson cones, e.g., \cite{CHsub} and the author suspect they might work for Simon's hypersurfaces as well. However, the calculations are significantly more complicated as we will need to calculate infimum of polynomials of order $9$ in general.

The author choose to use the subcalibrations in \cite{ZLlc}, which not only works for almost all cases but also only involves optimization of quadratic polynomials. Unfortunately, even though the proof is conceptually simple, the successive quadratic minimum calculations make the paper exhausting to read and write.
\subsection{The case of $M_{(3,5)}$}
The author believes that Simon's hypersurfaces based on the Lawson $M_{(3,5)}$ is also area-minimizing, but could not prove so. The main reason is that the subcalibration profie we use did not have a positive enough margin in the quadratic form factor in (\ref{eqfact}). Neither do quartic polynomials in $u,v.$
\subsection{Open cases of Almgren's conjecture}
Combining \cite{LSfr} and \cite{ZLa}, Almgren's conjecture \ref{ca} are settled in Riemannian manifolds in the following dimension and codimension range
\begin{itemize}
	\item codimension $=1$,
	\item codimension $=2$, dimension $\ge 8,$
	\item dimension $\ge 3$, codimension $\ge 3.$
\end{itemize} 
The second bullet follows via embedding Simon's hypersurface into $\R^{n+m+l}\times\R$. The remaining cases are
\begin{itemize}
	\item whether fractal singular sets can happen on analytic, in particular flat Euclidean, metrics,
	\item codimension $=2,$ dimension $=3,4,5,6,7$.
\end{itemize}
		\printbibliography
		\appendix
		We collect mathematical verifications. They are in the order of verifications for Sections \ref{seces}, \ref{secfact+} and \ref{secfact-}.\includepdfmerge[pages=-, nup=2x2, frame=true]{verificationsphi.pdf,verificationsFpositive.pdf,verificationsFnegative.pdf}
\end{document}